\renewcommand{\a}{{\bf a}}
\renewcommand{\b}{{\bf b}}
\newcommand{\f}{{\bf f}}
\newcommand{\g}{{\bf g}}
\newcommand{\n}{{\bf n}}
\renewcommand{\t}{{\bf t}}
\renewcommand{\u}{{\bf u}}
\renewcommand{\v}{{\bf v}}
\newcommand{\w}{{\bf w}}
\newcommand{\x}{{\bf x}}
\newcommand{\zero}{{\bf 0}}
\newcommand{\bb}{{\bf B}}
\newcommand{\cc}{{\bf C}}
\newcommand{\hh}{{\bf H}}
\newcommand{\ii}{{\bf I}}
\renewcommand{\ll}{{\bf L}}
\newcommand{\mm}{{\bf M}}
\newcommand{\pp}{{\bf P}}
\newcommand{\rr}{{\bf R}}
\renewcommand{\tt}{{\bf T}}
\newcommand{\zz}{{\bf Z}}
\newcommand{\nnn}{\mathbb{N}}
\newcommand{\rrr}{\mathbb{R}}
\newcommand{\ttt}{\mathbb{T}}
\newcommand{\btau}{{\bm\tau}}
\newcommand{\bsigma}{{\bm\sigma}}
\newcommand{\bvarepsilon}{{\bm\varepsilon}}
\renewcommand{\div}{{\rm div}}
\newcommand{\bfdiv}{{\bf div}}
\newcommand{\curl}{{\rm curl}}
\newcommand{\tr}{{\rm tr}}
\newcommand{\transpose}{\texttt{t}}
\newcommand{\deviator}{\texttt{d}}
\newcommand{\gen}{{\rm span}}
\newtheorem{theorem}{Theorem}
\title{Revisiting the Jones eigenvalue problem in fluid-structure interaction\thanks{This work was partially supported by CONICYT-Chile, through Becas 
Chile, and NSERC through the Discovery program of Canada.}}
\author[1]{Sebasti\'an Dom\'inguez\thanks{Corresponding author: domingue@sfu.ca}}
\author[1]{Nilima Nigam}
\author[2]{Jiguang Sun}
\affil[1]{Department of Mathematics, Simon Fraser University, Canada}
\affil[2]{Department of Mathematical Sciences, Michigan Technological University, USA}
\begin{document}

\maketitle

\begin{abstract}
The Jones eigenvalue problem, first described in \cite{ref:jones1983}, concerns unusual vibrational modes in bounded elastic 
bodies: 
time-harmonic displacements whose tractions and normal components are both identically zero on the boundary. This 
problem is usually associated with a lack of unique solvability for certain models of fluid-structure interaction. The 
boundary conditions in this problem appear, at first glance, to rule out {\it any} non-trivial modes unless the domain 
possesses significant geometric symmetries. Indeed, Jones modes were shown to not be possible in most $C^\infty$ 
domains 
in \cite{ref:harge1990}. However, we {show} in this paper that while the existence of Jones modes sensitively 
depends on 
the domain geometry, such modes {\it do} exist in a broad class of domains.   This paper presents the first detailed 
theoretical and computational investigation of this eigenvalue problem in Lipschitz domains. We also analytically 
demonstrate Jones modes on some simple geometries.
\end{abstract}


{\bf Keywords}: fluid-structure interaction, Jones eigenvalue problem, finite element method
\vspace{.25cm}

{\bf AMS subject classifications}: 65N25, 65N30, 74B05

\section{Introduction}\label{sec:Introduction}
In this paper we investigate the {\it Jones eigenvalue problem}, which is to locate non-trivial {vector fields 
$\u$ in 
$\rrr^n $} ($n\in\{2,3\}$), and {scalars} $w\in \rrr$ so that
\begin{subequations}\label{Introjones}
\begin{align}
\mathcal{L}\u:= - \left(\mu\Delta\u + (\lambda + \mu)\nabla({\rm div}\u)\right) &= {\rho w^2\u} & \text{in 
$\Omega$,}\label{eq:introjones1}\\
{\big(\mu \nabla\u  {+} (\lambda+\mu)(\div\,\u)\ii\big)\n}& = {\bf 0} & \text{on 
$\partial\Omega$,}\label{eq:introjones2}\\
\u\cdot\n &=0&
\text{on 
$\partial\Omega$.}\label{eq:introjones3}
\end{align}
\end{subequations}

Here $\Omega \subseteq \mathbb{R}^n$ is a bounded domain with Lipschitz boundary $\partial \Omega$, $\n$ is the unit 
outer normal on $\partial \Omega$, $\rho>0$ is a {\it density} and $\mu>0$, $\lambda \in \rrr$  such that $ \lambda + 
\left(\frac{2}{n}\right)\mu>0$ are the so-called {\it Lam\'e parameters}.

The Jones eigenvalue problem arises  while studying  time-harmonic solutions of a fluid-solid interaction 
problem in $\mathbb{R}^n$. Precisely, suppose an isotropic elastic bounded body occupying {the} region $\Omega \subset \mathbb{R}^n$ is 
immersed in an inviscid compressible fluid 
occupying the 
rest of the space. The {solutions of the} Jones eigenvalue problem coincides exactly with the determination of non-trivial solutions of the corresponding homogeneous equations governing the displacements of the elastic body. The occurrence of these 
eigenpairs was first noticed in \cite{ref:jones1983}, where the author introduced the 
{fluid-structure interaction problem of interest and 
pointed out its lack of uniqueness}. Many other authors have {also} noticed the 
non-uniqueness issue in this model \cite{ref:barucq2014, MR3200273, MR2566767, ref:hsiao2000, ref:hsiao2017, luke1995, ref:meddahi2013, MR2169165}. In these papers the main interest was in studying the full fluid-structure problem, and the Jones eigenmodes were of interest only within the context of well-posedness, which was only guaranteed away from such modes. We note this is not the only possible model for fluid-structure interaction in the frequency domain; other models which ameliorate the breakdown of uniqueness at exceptional frequencies have been proposed. We discuss this later {in} \autoref{sec:problem}. {Nonetheless, in  \cite{ref:estecahandy2013}, it was found that Jones eigenpairs may pollute numerical approximations in both the solid and the fluid. Such phenomenon was later confirmed in \cite{ref:azpir2018}. This shows the importance of 
identifying eigenpairs of \autoref{Introjones} in order to obtain suitable numerical methods to handle the 
corresponding fluid-structure problem.}

Our focus in this paper is the eigenvalue problem \autoref{Introjones}. We notice that \autoref{eq:introjones1} and \autoref{eq:introjones2} together define a standard eigenvalue 
problem (we call this the {\it traction eigenvalue problem}) for the Lam\'e operator $\mathcal{L}$ on $\Omega$, 
analogous to the Neumann eigenvalue problem for the Laplacian. The traction eigenvalue problem has been extensively studied and has numerous applications in mechanical engineering;  the existence of a countable discrete spectrum for Lipschitz domains is well-established (see, e.g. \cite{ref:knops1971}). However, the problem under consideration in the present article 
asks: 
do there exist traction eigenmodes which additionally satisfy \autoref{eq:introjones3}? This constraint intimately 
couples the geometry of the domain with the Jones eigenmode; in essence, the only traction modes which are also Jones 
modes are those which are purely tangential to the boundary.

Not much is known about the Jones eigenvalue problem itself. As mentioned, the most intriguing feature of this 
problem is its dependence on the boundary of the domain. An influential paper \cite{ref:harge1990} showed that for 
almost any 3D domain with $C^\infty$ boundary, there can be no modes with free traction and zero normal component on 
the 
boundary that solve the Jones eigenvalue problem. The central claim in this paper was established in a fairly narrow 
setting - for instance, the analysis cannot extend to domains with corners - yet perhaps the main theorem served to 
deter further investigations. Likewise, \cite{ref:natroshvili2005} showed that smooth 3D domains having two flat 
non-parallel manifolds of the boundary cannot support a non-trivial divergence-free  Jones mode. Even though 
the authors claim these kind of deformations are Jones eigenvectors, we note that the full eigenproblem 
\autoref{eq:introjones1} does not impose the condition on the divergence. 

The rest of this paper is organized as follows: in \autoref{sec:problem} we introduce the eigenvalue problem. We first 
describe the fluid-solid interaction where the Jones modes appear. We provide exact Jones eigenmodes on rectangles. We 
next provide a detailed description of the point spectrum of this problem and identify important properties relating 
the 
eigenpairs with the domain. In \autoref{sec:formulation} we derive a primal formulation to approximate Jones eigenpairs 
where the extra constraint on the displacement in the normal direction on the boundary has been introduced as an 
essential condition in the search and test spaces. The continuity of the normal trace will  ensure this space is 
closed. 
A careful treatment of this formulation is then provided as it is known that the spectrum of this problem depends 
heavily on the geometry of the domain \cite{ref:harge1990,ref:natroshvili2005}. In fact, the proof of the usual 
ellipticity of one of the bilinear forms depends entirely on a Korn's inequality shown in 
\cite{ref:bauer2016} for 
Lipschitz domains in $\rrr^n$, with $n\in\{2,3\}$. A weaker version of this result for domains with $C^1$ boundary is 
given in \cite{ref:desvillettes2002}. In addition, in \cite{ref:bauer2016} the authors showed that the gradient of a 
vector with mixed tangential and/or normal components vanishing on the boundary can be bounded (up to a constant) above 
by the deviatoric part of its strain tensor in concave or polyhedral domains in $\rrr^3$ with piecewise $C^2$ 
boundaries 
(as defined in the same reference). In \autoref{sec:discrete} we {use} a conforming discretization of the continuous 
eigenvalue problem via Lagrange finite elements. The sensitivity of the spectrum to the shape of the domain suggests that the classical FEM using triangular meshes may 
{not be} the best method to use to approximate the spectrum of this problem for curved domains. Numerical examples presented in \autoref{sec:numerics} show the performance of this classical scheme and exhibit the different regularity of the eigenfunctions of this problem in different domains.

\section{The fluid-structure interaction problem}\label{sec:problem}
Solutions of the Jones eigenvalue problem appear as non-trivial elements in the kernel of a model of fluid-solid 
interaction where 
an isotropic elastic body is immersed in a compressible inviscid fluid occupying the whole space $\rrr^n$, $n\in\{2,3\}$. In this 
section we introduce such problem and establish its connection with the Jones eigenpairs.

\subsection{Some notation}
We begin by fixing the notation for the remainder of this paper. For vectors in $\rrr^n$, the operation $\a\cdot\b$ is the standard dot product with induced norm $\|\cdot\|$. 
For second-order tensors $\bsigma,\,\btau$ in $\rrr^{n\times n}$, the double dot product is the usual Frobenius inner product for matrices 
\begin{align*}
 \bsigma:\btau:=\sum_{i,j=1}^n \btau_{ij}\bsigma_{ij} =  \tr(\btau^\transpose\bsigma).
\end{align*}
This inner product induces 
the usual Frobenius norm.
 For differential operators, 
$\nabla$ denotes the usual gradient operator {acting on} either a scalar field or a vector field. The divergence 
operator 
``\div'' of a vector field reduces to the trace of its gradient. The operator ``\bfdiv'' acting on tensors stands for 
the usual divergence operator applied to each row of the tensors. The curl operator $\curl$ of a vector field is defined as usual in the 3D case. For vector fields $\u$ in 2D, $\curl\,\u$ reduces to a vector that only points out of the plane. The deviatoric part of a tensor of $\btau$ is 
$\btau^{\deviator} := 
\btau - \frac{1}{n}\tr(\btau)\ii$, where $\ii$ is the identity matrix of $n\times n$ entries. If $\btau, \bsigma$ are 
 second-order tensors whose entries are $L^2(\Omega)$ functions on a bounded domain $\Omega$, we define 
\begin{align*}
 (\btau,\bsigma)_0:=\int_{\Omega} \btau:\bsigma \, d\Omega.
\end{align*} We observe that
\begin{align*}
	\|\btau^\deviator\|_0^2=\|\btau\|_0^2-\frac{2}{n}\left(\tr(\btau)\ii,\btau\right)_0+ 
\frac{1}{n^2}(\tr(\btau)\ii,\tr(\btau)\ii)_0=\|\btau\|^2_0-\frac{1}{n}\|\tr(\btau)\|_0^2.
\end{align*}
If {$\u$ is a differentiable vector field in $\rrr^n$}, the strain {tensor} is a symmetric second-order tensor
\begin{align*}
 \bvarepsilon(\u):=\frac{1}{2}\left(\nabla \u + (\nabla \u)^\transpose\right).
\end{align*}


In what follows we need to identify domains which  are {\it axisymmetric}. We employ the definition given in 
\cite{ref:bauer2016}: The domain $\Omega$ is axisymmetric if it is invariant under rotations about an axis of symmetry. 
With this definition one can see that in the 2D case the {disk} and its complement are the only axisymmetric domains. For the 3D case,  the number of axisymmetric domains becomes a lot larger. Any solid 
of rotation is axisymmetric, and has circular cross-section transverse to the axis of rotation.

\subsection{A model of fluid-solid interaction}
As discussed in \autoref{sec:Introduction}, the Jones eigenproblem was originally described within the context of a 
fluid-structure interaction problem. 
Consider a bounded, simply connected domain 
$\Omega_s\subseteq\mathbb{R}^n$  representing an isotropic linearly elastic body in $\mathbb{R}^n$. This body is assumed to be
immersed in a compressible inviscid fluid occupying the region $\Omega_f := \mathbb{R}^n\backslash\bar{\Omega}_s$. See 
\autoref{fig:schematic} for a schematic of this situation.
\graphicspath{{./images/}}
\begin{figure}[!ht]
\centering
\includegraphics[width = .75\textwidth, height=0.25\textheight]{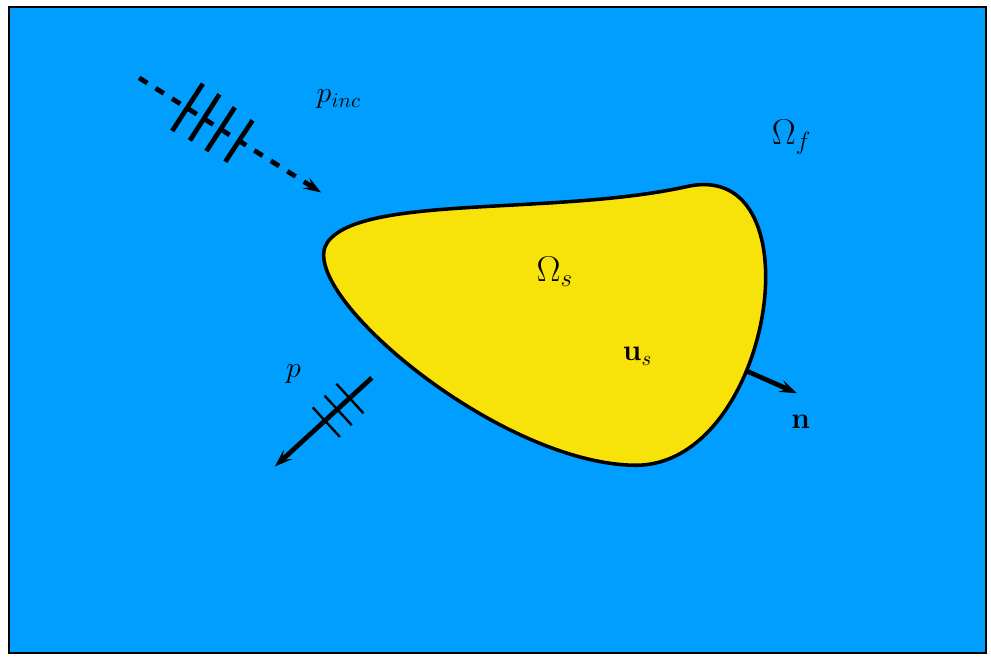}
\caption{Schematic of {the} fluid-structure interaction problem.}
\label{fig:schematic}
\end{figure}
Note that $\partial\Omega_s$, the bounded {component} of the boundary of $\Omega_f$ coincides with the boundary of the {elastic body} $\Omega_s$. {We  denote} 
$\Gamma := \partial\Omega_f = \partial\Omega_s$.

The parameters describing the elastic properties of $\Omega_s$ are the so-called Lam\'e constants $\mu>0$ and 
$\lambda\in\mathbb{R}$, satisfying the condition
\begin{align}
 \lambda+\left(\frac{2}{n}\right)\mu>0.\label{eq:lamecondition}
\end{align}
One fluid-structure interaction problem of interest concerns the situation when the fields are time-harmonic, allowing 
us to factor out the time-dependence and consider the problem in the frequency domain. Using standard  interface 
conditions coupling the pressure in the fluid $p$ and the elastic displacement in the solid $\u$, the fluid-solid 
interaction problem in the frequency domain reads: given volumetric forces $\f$ and $\g$, and an 
incident pressure $p_{\rm inc}$, find a pressure field $p$ in $\Omega_f$ and elastic deformations $\u$ of 
$\Omega_s$ 
satisfying
\begin{subequations}\label{eq:coupled-problem}
 \begin{align}
 &\Delta p + \left(\frac{w^2}{c^2}\right)p = \div\,\f,\,\,\text{on $\Omega_f$},\quad-\rho w^2\u 
- \bfdiv\,\bm\sigma(\u) = \g,\,\,\text{in $\Omega_s$},\label{eq:fluid-solid-problem1}\\
 &-(p + p_{\rm inc})\n = \bm\sigma(\u)\n,\quad \frac{\partial}{\partial\n}( p + p_{\rm inc}) = 
\rho w^2\u\cdot\n,\,\,\text{on $\Gamma$},\label{eq:fluid-solid-problem2}\\
 &\frac{\partial p}{\partial r} - i\left(\frac{w}{c}\right) p = o(1/r),\,\,\text{as 
$r:=\|\x\|\to\infty$}.\label{eq:fluid-solid-problem3}
\end{align}
\end{subequations}
The parameter {$c$} is the constant speed of the sound in the fluid, and the Cauchy tensor $\bm\sigma$ 
depends on the Lam\'e constants $\mu>0$ 
and $\lambda\in\rrr$ and is defined in terms of the strain tensor $\bm \varepsilon(\u)$ as
\begin{align*}
	\bm\sigma(\u) := 2\mu\bm\varepsilon(\u) + \lambda\,\tr(\bm\varepsilon(\u))\ii\quad\text{in $\Omega_s$.}
\end{align*}
Using the vector Laplacian operator, we 
see that
\begin{align*}
	{\rm\bf div}\,\bm\sigma(\u) = \mu\Delta\u + (\lambda + \mu)\nabla({\rm div}\,\u)\quad\text{on 
$\Omega_s$}.
\end{align*}

This is a commonly accepted formulation for time-harmonic fluid-solid interaction problems involving inviscid flow, 
see, for example, \cite{ref:hsiao2000, ref:hsiao2017, ref:huttunen2008}. The system in 
\autoref{eq:coupled-problem} is known to possess a non-trivial kernel under certain situations. As discussed in 
\cite{ref:jones1983}, this problem lacks a unique solution whenever $\u$ is a non-trivial solution of the homogeneous 
problem:
\begin{align}
 -\bfdiv\,\bm\sigma(\u) = \rho w^2\u,\,\,\text{in $\Omega_s$},\quad\bm\sigma(\u)\n = {\bf 
0},\quad\u\cdot\n = 0,\,\,\text{on $\Gamma$}.\label{eq:jones-modes}
\end{align}
The pair $(w^2,\u)$ solving this eigenvalue problem is a {\it Jones eigenpair} \cite{ref:jones1983}. The homogeneous problem 
for the 
displacements can be viewed as the usual 
eigenvalue problem for linear elasticity with traction free boundary condition, plus the extra constraint on the normal 
trace of $\u$ along the boundary. Therefore, we may consider this as an overdetermined problem. We know that there is a 
countable 
number of eigenmodes for linear 
elasticity 
with free traction given reasonable assumptions on $\Gamma$  (see \cite{ref:babuskaosborn1991} and references therein). 
The extra 
condition  $\u\cdot \n=0$ on the boundary  plays an important role in the existence of the zero eigenvalue of 
\autoref{eq:jones-modes}. All of these 
properties are {discussed} in detailed in the next sections.

{We note that other authors have addressed the lack of uniqueness in \autoref{eq:coupled-problem}.} A slightly different model for fluid-structure interaction in the frequency domain can be derived by considering the 
problem with non-zero fluid viscosity and then  taking {the viscosity} to zero. As pointed out in \cite{ref:hsiao2000},{ in this setting it is reasonable to 
adding another condition on the shear of $\u$ on the interface as a fix for the non-uniqueness of }
\autoref{eq:coupled-problem}. The condition
\begin{align*}
 \rho w^2\u\cdot\t = \frac{\partial}{\partial\t}(\bsigma(\u)\n\cdot\n),
\end{align*}
removes the non-zero solutions of \autoref{eq:jones-modes}. Here $\t$ is the unit tangent vector on $\Gamma$.  In 
\cite{ref:gatica2009}, the authors add a Robin boundary condition for the fluid pressure on a far enough 
``artificial'' boundary containing the solid. They then consider the fluid to be bounded between the solid and this 
interface. As shown in the same reference, the modified problem has a unique solution.

Since our interest in the present paper concerns the eigenvalue problem 
\autoref{Introjones}, we do not delve any further into the properties of the interaction problem (cf. 
\autoref{eq:coupled-problem}).

\subsection{Lipschitz domains can support Jones modes}\label{subsec:harge}
The paper by Harg\'e in 1990 \cite{ref:harge1990} examined the existence of non-trivial solutions of 
\autoref{Introjones}. The results of this paper have been cited extensively in subsequent works focusing on the 
well-posedness of the fluid-structure model presented in the previous section. As an instance, {\it ``Fortunately, 
these traction-free oscillations occur only in highly specific situations..."} \cite{hsiao2003};
{\it ``Note that Harg\'e [...] has established that Jones modes do not exist for arbitrarily-shaped bodies.}"
\cite{ref:barucq2014}; {\it ''However, intuitively, we do not expect Jones frequencies to
exist for an "arbitrary" body; this has been proved recently by Harg\'e [...]".} \cite{martin14} These papers also 
note that domains which are axisymmetric may indeed have such modes. As a historical aside, Horace Lamb \cite{ref:lamb} 
documented such 
modes in the sphere in 1881.

Revisiting \cite{ref:harge1990}, we note that the setting of the paper is as follows: 
\begin{quote}
{\it Pour $\Omega$ {ouvert} born\'e connexe de $\mathbb{R}^3$ \'a bord $C^\infty$  ... fix\'e et soit $
E = \Big\{\phi\in C^{\infty}(\overline{\Omega};\mathbb{R}^3);\, \phi \mbox{ diff\'eomorphisme de $\overline{\Omega}$ 
sur } \phi(\overline{\Omega})\Big\}$; on munit $E$ de la topologie $C^\infty$ ...}\cite{ref:harge1990}
\end{quote}
and the main theorem is then
\begin{quote}
{\it There is {an open}, countable dense intersection $G$ of open sets of $E$ such that for any $\phi$ in $G$, 
there 
	is no exceptional eigenvalue ...}
\end{quote}
This theorem and its technique of proof cannot be directly applied to the situation of polygonal domains in 
$\mathbb{R}^2$, nor to polyhedral domains. Intuitively one may believe the result should hold in polygonal or 
polyhedral 
domains; indeed, our initial starting point for the current study was to try to extend the result of Harg\'e to general Lipschitz domains. The critical observation was the following example.
It is easy to verify by inspection that $(w_s^2,\u_s), 
(w_p^2,\u_p)$ defined below are Jones eigenpairs on the rectangle $[0,a]\times[0,b]$:
\begin{subequations}\label{eq:smode}
\begin{align}
&\u_s := \left((a\ell)\sin\Big(\frac{m\pi x}{a}\Big)\cos\Big(\frac{\ell\pi y}{b}\Big), -(bm)
\cos\Big(\frac{m\pi x}{a}\Big)\sin\Big(\frac{\ell\pi y}{b}\Big)\right)^\transpose,\\ 
&w_s^2 := \frac{\mu\pi^2}{\rho}\left(\frac{m^2}{a^2}+\frac{\ell^2}{b^2}\right),\quad m,\ell = 
1,2,\ldots,
\end{align}
\end{subequations}
and 
\begin{subequations}\label{eq:pmode}
\begin{align}
&\u_p := \left((bm) \sin\Big(\frac{m\pi x}{a}\Big)\cos\Big(\frac{\ell\pi y}{b}\Big), (a\ell)\cos\Big(\frac{m\pi 
x}{a}\Big)\sin\Big(\frac{\ell\pi y}{b}\Big)\right)^\transpose,\\
&w_p^2 := \frac{(\lambda + 2\mu)\pi^2}{\rho}\left(\frac{m^2}{a^2}+\frac{\ell^2}{b^2}\right),\quad m,\ell = 
0,1,\ldots,\quad m+\ell >0.
\end{align}
\end{subequations}
It can also be readily seen that ${\div\,\u_s} = 0$ and $\curl\,\u_p = 
0$; 
eigenmodes of this form are termed 
$s$- and $p$- modes respectively. Further, some eigenvalues may have geometric multiplicity depending on 
{$a$} and $b$. 
In case $\lambda,\mu, \frac{a^2}{b^2}$ are integers, the value $w^2$ can be a higher-multiplicity Jones eigenvalue with an associated eigenspace which includes both $s$- 
and $p$- modes, 
provided we can find integer pairs $(m,\ell)$ and $(n,k)$ 
satisfying
$$ \frac{\mu \pi^2}{\rho}\left(\frac{m^2}{a^2}+\frac{\ell^2}{b^2}\right) = \frac{(\lambda+2\mu) 
\pi^2}{\rho}\left(\frac{n^2}{a^2}+\frac{k^2}{b^2}\right)=: w^2.$$

Studying this simple example it became clear that the situation for polygonal domains required a different approach, 
and would yield different results {to those described} in \cite{ref:harge1990}. One could think, for example, that under certain conditions it could be possible for domains comprising a finite union of rectangles could possess Jones modes; this is shown to be the case for the L-shaped domain in a subsequent section.

\section{Weak formulation}\label{sec:formulation}
In the presence of corners or edges, it is no longer reasonable to ask for the boundary condition in the problem \autoref{eq:jones-modes} to be 
 imposed pointwise, since the eigenfunctions may not be sufficiently regular. {In fact, the zero normal trace condition on the displacement holds almost everywhere on $\partial\Omega$; it is clear that this condition cannot be imposed, for example over vertices of the boundary.} A weak formulation is needed and later in this paper we shall compute Jones modes using a finite  element discretization.

Let $\Omega$ be a Lipschitz domain of $\mathbb{R}^n$ (we drop the subscript referring to the solid domain). Recall the 
eigenvalue problem in \autoref{eq:jones-modes}: find the Jones pairs 
$(w^2,\u)$, 
$\u$ non-zero, such that
\begin{subequations}\label{eq:jones}
 \begin{align}
 -{\rm\bf div}\,\bm\sigma(\u) &= \rho w^2\u,\quad\text{in $\Omega$},\label{eq:jones1}\\
 \bm\sigma(\u)\n = {\bf 0},&\quad \u\cdot\n = 0.\quad\text{on $\partial\Omega$}\label{eq:jones2}
\end{align}
\end{subequations}
with $\rho>0$ a fixed constant. 
Using the definition of the Cauchy stress tensor, we can write \autoref{eq:jones} as
\begin{subequations}\label{eq:equivjones}
 \begin{align}
 \mu\Delta\u + (\lambda + \mu)\nabla({\rm div}\u) +\rho w^2\u &= {\bf 0}\quad\text{in 
$\Omega$,}\label{eq:jones3}\\
\big(\mu \nabla\u + (\lambda+\mu)(\div\,\u)\ii\big)\n = {\bf 0},\,\u\cdot\n &= 0,\quad\text{on 
$\partial\Omega$.}\label{eq:jones4}
\end{align}
\end{subequations}
In order to introduce a weak formulation of \autoref{eq:jones} (equivalently \autoref{eq:equivjones}), we 
define the spaces
\begin{align*}
\hh^1(\Omega):= \Big\{\v=(v_1,\ldots,v_n):\, v_i\in H^1(\Omega)\Big\}, \quad \hh:= \Big\{\u\in \hh^1(\Omega):\,\gamma_\n\u = 
0\,\,\text{on $\partial\Omega$}\Big\}.
\end{align*} 
{Here $H^1(\Omega)$ denotes the usual Hilbert space of scalar functions in $L^2(\Omega)$ whose partial 
derivatives (in all directions) belong to $L^2(\Omega)$. The operator $\gamma_\n:\hh^1(\Omega)\to L^2(\partial\Omega)$ 
is the normal trace operator, which is linear and bounded in $\hh^1(\Omega)$. The space $\hh$ is endowed with the usual 
$\hh^1$-inner product, denoted by $(\cdot,\cdot)_1$. This implies that $\hh$ is a closed subspace of $\hh^1(\Omega)$}. 
{We shall also need the space $\hh^{t+1}(\Omega)$  of vector fields 
whose entries
belong to the Sobolev space $H^{t+1}(\Omega), t\geq 0$, and the semi-norm $|\cdot|_{t+1}$} in 
{$\hh^{t+1}(\Omega)$.}

We consider the 
following primal formulation of \autoref{eq:equivjones}: find 
$\u\in\hh$ and $\kappa \in\rrr$ such that

\begin{align}
 a(\u,\v) = \kappa\, (\u,\v)_0,\quad\forall\, \v\in \hh,\label{eq:primal1}
\end{align}
where $\kappa := \rho w^2$, and the bilinear form $a:\hh\times \hh\to \rrr$ is given by
\begin{align*}
 a(\u,\v) := \mu(\nabla\u,\nabla\v)_0 + (\lambda +\mu)({\rm div}\u,{\rm 
div}\v)_0,\quad\forall\,\u,\,\v\in \hh,
\end{align*}
Since ${\|{\rm div}\u\|_0 \leq \|\nabla\u\|_0} \leq \|\u\|_1$, for all $\u\in 
\hh^1(\Omega)$, the bilinear form 
$a(\cdot,\cdot)$ is 
bounded. In addition, $a(\cdot,\cdot)$ is symmetric and positive semi-definite. {The Rayleigh quotient gives}
\begin{align}
\frac{a(\v,\v)}{\|\v\|_0^2} \geq 0,\quad\forall\,\v\in\hh,\,\v\neq{\bf 0}.\label{eq:rayleigh}
\end{align}
We see that all possible eigenvalues of \autoref{eq:primal1} are real and non-negative. Using the Cauchy tensor 
$\bsigma$ we can write \autoref{eq:primal1} in the equivalent form
	\begin{align}
 \tilde a(\u,\v) = \kappa\, (\u,\v)_0,\quad\forall\, \v\in \hh,\label{eq:primal2}
\end{align}
where $\tilde{a}(\u,\v) := (\bm\sigma(\u),\nabla\v)_0 = (\bm\sigma(\u),\bm\varepsilon(\v))_0$ for all 
$\u,\v\in\hh.$ In terms of the strain tensor only, $\tilde a(\cdot,\cdot)$ becomes
\begin{align*}
\tilde{a}(\u,\v) = 2\mu\big(\bvarepsilon(\u),\bvarepsilon(\v)\big)_0 + \lambda 
\big(\tr(\bvarepsilon(\u)),\tr(\bvarepsilon(\v))\big)_0.
\end{align*}
Using the deviatoric part of the strain tensor we can write
\begin{align}
\tilde{a}(\u,\v) = 2\mu\big(\bvarepsilon(\u)^\deviator,\bvarepsilon(\v)^\deviator\big)_0 + \left(\lambda + 
\frac{2\mu}{n}\right)\big(\tr(\bvarepsilon(\u)),\tr(\bvarepsilon(\v))\big)_0,\quad\forall\, 
\u,\,\v\in\hh.\label{eq:aformdeviator}
\end{align}
Obviously, $a(\u,\v) = \tilde a(\u,\v)$ for all $\u,\,\v\in\hh$. Furthermore, the bilinear forms $a(\cdot,\cdot)$ and 
$\tilde a(\cdot,\cdot)$ are 
bounded in $\hh^1(\Omega)\times \hh^1(\Omega)$ and hence in $\hh\times\hh$. 
We can then define the solution operator $\tilde T:\ll^2(\Omega)\to\hh$ by $\tilde T(\f) = \u$ such that
\begin{align}
 \tilde{a}(\u,\v) = (\f,\v)_0,\quad\forall\,\v\in\hh.\label{eq:source}
\end{align}
{We immediately have that $\tilde T$ is a linear operator. However, to show all necessary properties of $\tilde T$, we need to show that $\tilde a$ (equivalent $a$) is coercive in $\hh$. In particular, this will give us the compactness of $\tilde T$ restricted to $\hh$, and therefore} we are guaranteed that $\tilde T$ has a countable and positive point spectrum with eigenfunctions lying in $\hh$. These properties will rely on the coercivity properties of $\tilde{a}(\cdot,\cdot)$, which will depend crucially on the domain shape as we shall see.
%

Using the definition of $\tilde a(\cdot,\cdot)$, for any $\u\in\hh$ we have
\begin{align*}
	\tilde a(\u,\u) =&\, 2\mu\|\bvarepsilon(\u)^\deviator\|_0^2 + \left(\lambda + 
	\frac{2\mu}{n}\right)\|\tr(\bvarepsilon(\u))\|_0^2\\
	=&\, n\Big(\frac{2\mu}{n}\|\bvarepsilon(\u)^\deviator\|_0^2 + \left(\lambda + 
	\frac{2\mu}{n}\right)\frac{1}{n}\|\tr(\bvarepsilon(\u))\|_0^2\Big)\\
	\geq&\,\min\Big\{2\mu,n\left(\lambda + \frac{2\mu}{n}\right)\Big\}\Big(\|\bvarepsilon(\u)^\deviator\|_0^2 + 
	\frac{1}{n}\|\tr(\bvarepsilon(\u))\|_0^2\Big)\\
	=&\, \min\Big\{2\mu,n\left(\lambda + 
	\frac{2\mu}{n}\right)\Big\}\|\bvarepsilon(\u)\|_0^2,
\end{align*}
where we have used  $\|\btau\|_0^2 = \|\btau^\deviator\|_0^2+\frac{1}{n}\|\tr(\btau)\|_0^2$.
This establishes the inequality
\begin{align}
	\tilde a(\u,\u)\geq&\,\min\Big\{2\mu,n\left(\lambda + 
	\frac{2\mu}{n}\right)\Big\}\|\bvarepsilon(\u)\|_0^2,\quad\forall\,\u\in\hh.\label{eq:strain-bound}
\end{align}

{Now, to show that $\tilde{a}(\cdot,\cdot)$ is coercive on $\hh$, we need to bound from below the term 
$\|\bvarepsilon(\u)\|_0$ by $\|\u\|_1$ (up to a constant). However, as we will see in the forthcoming sections, the 
intersection $\rr\mm(\Omega)\cap\hh$ depends intimately on $\Omega$ and therefore the positiveness of 
$\tilde{a}(\cdot,\cdot)$ cannot be established in case this intersection is not the trivial space.}

It turns out that for {\it non-axisymmetric Lipschitz 
domains}, $\kappa=0$ is not in the point spectrum of \autoref{eq:primal2}. We establish this in the following 
subsection.


\subsection{Existence of Jones modes on non-axisymmetric domains}
Let $\Omega$ be a non-axisymmetric domain in $\rrr^n,\,n\in\{2,3\}$. 
In  \cite{ref:bauer2016} the following version of {Korn's inequality for vector fields in $\hh$ defined on} 
non-axisymmetric 
Lipschitz 
domains was established: there is a positive constant $C$ which depends only on $\Omega$ so that
 \begin{align}
 	\|\bvarepsilon(\u)\|_0\geq C \|\u\|_1,\quad\forall\,\u\,\in\hh.\label{eq:korns}
 \end{align}
  {This is a type of Korn's inequality; we recall there are several variants of this inequality, \cite{ref:horgan1995}.}
  
 Combining the inequality above and the derived inequality for $\tilde a(\cdot,\cdot)$ in \autoref{eq:strain-bound} we obtain 
the 
coercivity of $\tilde a(\cdot,\cdot)$ in $\hh$ for 
non-axisymmetric domains:
 \begin{align}
 	\tilde a(\u,\u) \geq&\, C^2\min\Big\{2\mu,d\left(\lambda + 
 	\frac{2\mu}{n}\right)\Big\}\|\u\|_1^2,\quad\forall\,\u\in\hh.\label{eq:tildeacoercive}
 \end{align}
  Provided $\Omega$ is a non-axisymmetric Lipschitz domain, the coercivity of $\tilde a(\cdot,\cdot)$ means that the 
solution 
operator $\tilde{T}|_\hh:\hh \rightarrow \hh$ is well-defined, and satisfies
 $$ C^2\min\Big\{2\mu,d\left(\lambda +  \frac{2\mu}{n}\right)\Big\}\|\tilde T \f\|_1^2 \leq {\tilde{a}(\tilde T\f,\tilde T\f) = (\f,\tilde T\f)_0} 
\leq \|\tilde 
T\f\|_0\|\f\|_0,$$
 {for all $\f\in\hh$. This establishes the boundedness of $\tilde{T}$:}
 \begin{align}
 	\|\tilde T\f\|_1\leq \frac{C^{-2}}{\min\Big\{2\mu,d\left(\lambda + 
 		\frac{2\mu}{n}\right)\Big\}}\|\f\|_l,\quad l\in\{0,1\},\,\forall\,\f\in\hh.\label{eq:tildet-bound}
 \end{align}
 The compactness of the inclusion 
 $\hh^1(\Omega)\hookrightarrow\ll^2(\Omega)$, and the fact that $\hh$ is closed in $\hh^1(\Omega)$, imply that $\hh$ is continuously embedded in $\hh^1(\Omega)$, and hence
that the inclusion $\hh \hookrightarrow \ll^2(\Omega)$ is compact. Therefore, the previous bound with $l = 0$ imply the compactness of $\tilde T|_\hh$. 
The Spectral Theorem for bounded self-adjoint linear and compact operators says that $\tilde T$ has a countable real 
point spectrum  $\{{\alpha_n}\}\subseteq(0,{\|\tilde T\|})$ and eigenfunctions $\{\u_n\}$ such that $\tilde T\u_n = 
{\alpha_n}\u_n$ for 
all 
$n${, with $\|\tilde T\| := C^{-2}\min\Big\{2\mu,d\left(\lambda + 
 \frac{2\mu}{n}\right)\Big\}^{-1}$}. Note that {the eigenpair $({\alpha_n},\u_n)$ of $\tilde T$ solves \autoref{eq:primal2} with 
$\kappa_n =  \frac{1}{\alpha_n}$ and $\u_n$ as the corresponding eigenfunction.}

We remark that the results in this section also hold for the bilinear form $a(\cdot,\cdot)$ and therefore, since $\tilde a(\u,\v) = a(\u,\v)$ for all $\u,\,\v\in\hh$, this establishes the existence of the Jones spectrum for bounded Lipschitz domains $\Omega$ which are not axisymmetric.


{To finish up this section, we summarize the main properties in the following theorem.}
{\begin{theorem}\label{result:positiveevs}
 Let us assume $\Omega$ is a non-axisymmetric and Lipschitz domain of $\rrr^n$, $n\in\{2,3\}$. The spectrum of $\tilde 
T$ is $\{{\alpha_n}\}_{n\in\nnn}\subseteq (0,\|\tilde T\|)$, with eigenfunctions belonging to $\hh$. In addition, 
eigenfunctions corresponding to different eigenvalues are orthogonal in the usual $\ll^2$-inner product.
\end{theorem}}

\subsection{The case of zero eigenvalues and rigid motions}\label{sec:rigidmotions}
When studying problems involving the Lam\'e operator $\mathcal{L}$ (cf. \autoref{sec:Introduction}), we need to be aware of rigid motions. Depending on 
the boundary conditions imposed, rigid motions may be part of the eigenspace of certain eigenvalues. Rigid motions 
satisfy $\mathcal{L}\w=\zero$, and it is possible that they may satisfy both \autoref{eq:introjones2} and 
\autoref{eq:introjones3}. We now want to 
characterize domains having these eigenfunctions. Consider the space
\begin{align*}
 \rr\mm(\Omega) := \{\u\in\hh^1(\Omega):\, \u = \b+\bb\x,\,\,\b\in\rrr^n,\,\bb\in\rrr^{n\times 
n}_{skew},\,\x\in\Omega\},
\end{align*}
where $\rrr^{n\times n}_{skew}$ is the space of all skew-symmetric matrices in $\rrr^{n\times n}$. The space 
$\rr\mm(\Omega)$ consists of translations, rotations and combinations of these. It is known that (see, e.g. 
\cite{ref:babuskaosborn1991} and references therein) the linear elasticity problem with traction free boundary conditions
\begin{align}
 -\bfdiv\,\bsigma(\bar\u) = \delta \bar\u,\quad\text{in $\Omega$},\quad \bsigma(\bar\u)\n = {\bf 0},\quad\text{in 
$\partial\Omega$},\label{eq:neumann}
\end{align}
has eigenmodes in $\rr\mm(\Omega)$ with eigenvalue $\delta = 0$. In fact, if $n=2$, the eigenspace of 
$\delta = 0$ is exactly $\rr\mm(\Omega)$ with dimension 3. Define now the space 
\begin{align*}
 \zz := \Big\{\u\in\hh^1(\Omega):\,\tilde a(\u,\v) = 0,\,\,\forall\,\v\in\hh^1(\Omega)\Big\}.
\end{align*} 
Examining the weak formulation of \autoref{eq:neumann}, it is clear that $\rr\mm{(\Omega)} \subseteq \zz$.
We show that these spaces actually coincide.
\begin{theorem}\label{result:rigid}
 There holds $\rr\mm(\Omega) = \zz$.
\end{theorem}
\begin{proof}
 First, let $\u\in \rr\mm(\Omega)$. By definition, $\u = \b+\bb\x$, $\x\in\Omega$, $\bb$ skew-symmetric. Then $\nabla\u 
= \bb$ so that $\bvarepsilon(\u) = {\bf 0}$ and clearly $\tilde a(\u,\v) = 0$ for any $\v\in\hh^1(\Omega)$. Conversely, 
assume $\u\in\zz$. Then $\tilde{a}(\u,\u)=0$, and using the definition of $\tilde a(\cdot,\cdot)$ in 
\autoref{eq:aformdeviator} we 
get 
\begin{align*}
 0 = 2\mu\tr(\bvarepsilon(\u)) + n\lambda\tr(\bvarepsilon(\u)) = n\left(\lambda + 
\frac{2}{n}\mu\right)\tr(\bvarepsilon(\u)).
\end{align*}
Since $\lambda+\frac{2}{n}\mu>0$, we get that both $\tr(\bvarepsilon(\u)) = 0$  and $\bvarepsilon(\u) = {\bf 0}$. This 
implies that  $\u\in \rr\mm(\Omega)$.  
\end{proof}
Now that we know the eigenvalue problem in \autoref{eq:neumann} has $\rr\mm(\Omega)$ as the eigenspace of $\delta=0$, 
the question is if there is any non-zero elements in $\rr\mm(\Omega)\cap\hh$, i.e., those which satisfy the additional 
constraint $\u\cdot \n=0$ on the boundary. Such elements would be Jones modes corresponding to a zero Jones eigenvalue.

 The next {result} states the cases in which we 
have 2D rigid motions which additionally satisfy $\u \cdot \n =0$ on the boundary. We note parts (i) and (ii) of the 
result can be combined for a more succinct statement involving arbitrary half-spaces, but we provide the version below for 
clarity of exposition. 

\begin{theorem}\label{result:jonesrigid2d}
 Assume $n = 2$. If $\Omega$ is not bounded, then
 \begin{itemize}
  \item[(i)] $\rr\mm(\Omega)\cap\hh = {\rm span}\{(0,1)^\transpose\}$ if and only if $\Omega = 
\{(x,y)^\transpose\in\rrr^2:\, x>a, 
y\in\rrr\}$, for $a\in\rrr$.
\item[(ii)] $\rr\mm(\Omega)\cap\hh = {\rm span}\{(1,0)^\transpose\}$ if and only if $\Omega = 
\{(x,y)^\transpose\in\rrr^2:\, x\in\rrr,y>b\}$, for $b\in\rrr$.
\end{itemize}
In case $\Omega$ is bounded,
\begin{itemize}
\item[(iii)] $\rr\mm(\Omega)\cap\hh = {\rm span}\{(y,-x)^\transpose\}$ if and only if $\Omega = B(0,R)$.
 \end{itemize}
\end{theorem}
\begin{proof}
 For (i), suppose $\Omega = \{(x,y)^\transpose\in\rrr^2:\, x>a, 
y\in\rrr\}$, for some $a\in\rrr$.  Let 
$\u\in\rr\mm(\Omega)\cap\hh$. We write $\u = \b + \bb\x$, $\b\in\rrr^2$, $\bb$ skew-symmetric, and $\x\in\Omega$.
Assume
\begin{align*}
 \b = (b_1,b_2)^\transpose,\quad \bb = \left(\begin{array}{cc}
                                           0 & b\\
                                           -b & 0
                                          \end{array}
\right).
 \end{align*}
The normal on $\partial\Omega:=\{(a,y):\,y\in\rrr\}$ is $\n=(-1,0)^\transpose$. We have
\begin{align*}
 0 = \u\cdot\n = \b\cdot\n + \bb\x\cdot\n = b_1 + by,\quad\forall\,y\in\rrr.
\end{align*}
We must have $b_1 = 0$ and $b = 0$, which gives $\bb = {\bf 0}$ and $\b = (0,b_2)$, showing that $\u\in{\rm 
span}\{(0,1)^\transpose\}$. Part (ii) can be easily proved by following the same steps showed before. For (iii), assume 
$\Omega$ is a circle of radius $R$ {centered} at the origin. Let $\u\in\rr\mm(\Omega)\cap\hh$. As before, $\u = 
\b + 
\bb\x$, 
$\x\in\Omega$, and $\u\cdot\n = 0$ on $\partial\Omega = \{(x,y)\in\rrr^2:\, x^2+y^2=R^2\}$. Then
\begin{align*}
 0 = \u\cdot\n = \b\cdot\n + \bb\x\cdot\n = b_1n_1 + b_2n_2 + b(n_1y-n_2x),\quad\forall\,(x,y)\in\partial\Omega.
\end{align*}
The normal vector on $\partial\Omega$ is $\n = \frac{1}{R}(x,y)$. Putting this into the previous equation we 
obtain
\begin{align*}
 b_1x  + b_2y = 0,\quad\forall\,(x,y)\in\partial\Omega.
\end{align*}
Since $x$ and $y$ cannot be zero simultaneously, we conclude that $b_1 = b_2 = 0$ and $\u = \bb\x$, proving that 
$\u\in{\rm span}\{(y,-x)^\transpose\}$.

Note that the {converses} of all three parts (i), (ii) and (iii) are trivial since the basis of 
$\rr\mm(\Omega)\cap\hh$ is always orthogonal (in the Euclidean inner product) to the normal vector on the boundary of 
the corresponding domain.
\end{proof}

In the {result} above one could also {have the same conclusions for the} complement of each domain 
considered. Indeed, the 
complement of $\Omega$ in parts (i), (ii) and (iii) {obviously has the same boundary $\partial\Omega$, meaning 
that} the normal vector only changes its sign. This {indicates} that the vanishing 
condition of the normal trace of the displacement would be readily satisfied in this case as well.

\autoref{result:rigid} suggests that the traction eigenvalue problem given by \autoref{eq:neumann} has zero as 
eigenvalue with 
{rigid motions as eigenfunctions}, 
independent of the domain $\Omega$.  However, the extra constraint on the normal trace of the 
displacement $\u$ (cf. \autoref{eq:jones-modes}) may preclude {$\kappa = 0$} as a Jones eigenvalue on some 
domains. The elements in 
$\rr\mm(\Omega)\cap\hh$ 
depend on 
the boundary of $\Omega$ as shown in \autoref{result:jonesrigid2d}. Combining \autoref{result:rigid} and 
\autoref{result:jonesrigid2d}, we have the following theorem.
\begin{theorem}\label{thm:zero2d}
{Assume $n = 2$ and suppose} $\kappa = 0$ in \autoref{eq:primal1} (equivalently \autoref{eq:primal2}). If $\Omega$ is not bounded, then
\begin{itemize}
 \item[(i)] $\u_0 = (0,1)^\transpose$ is a Jones mode on $\Omega := \{(x,y)^\transpose\in\rrr^2:\, x>a, 
y\in\rrr\}$, for some $a\in\rrr$.
\item[(ii)] $\u_0 = (1,0)^\transpose$ is a Jones mode on $\Omega := \{(x,y)^\transpose\in\rrr^2:\, 
x\in\rrr,y>b\}$, for some $b\in\rrr$.
\end{itemize}
In case $\Omega$ is bounded, then
\begin{itemize}
\item[(iii)] $\u_0 = (y,-x)^\transpose$ is a Jones mode on $\Omega := B(0,R)$, for any fixed $0<R<\infty$. 
\end{itemize}
\end{theorem}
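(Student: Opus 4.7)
The plan is to observe that Theorem~\ref{thm:zero2d} is a direct synthesis of Lemma~\ref{result:rigid} and Lemma~\ref{result:jonesrigid2d}. A pair $(0,\u_0)$ with $\u_0 \neq \zero$ is a Jones eigenpair precisely when $\u_0 \in \hh$ (so the normal trace vanishes) and $a(\u_0,\v)=0$ for every $\v \in \hh$; equivalently $\tilde a(\u_0,\v)=0$ for every such $\v$. So the entire task reduces to exhibiting, on each of the three candidate domains, a nonzero element of $\rr\mm(\Omega)\cap\hh$ and checking that it indeed annihilates the bilinear form.

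First I would handle the bilinear-form condition in one shot. By Lemma~\ref{result:rigid}, $\rr\mm(\Omega)=\zz$, and an inspection of the definition of $\zz$ shows that elements of $\rr\mm(\Omega)$ annihilate $\tilde a(\cdot,\v)$ for all $\v \in \hh^1(\Omega)$, hence in particular for all $\v \in \hh \subseteq \hh^1(\Omega)$. So any element of $\rr\mm(\Omega)\cap\hh$ automatically qualifies as a Jones mode with $\kappa=0$, provided it is nonzero. This reduces Theorem~\ref{thm:zero2d} to the identification of $\rr\mm(\Omega)\cap\hh$ on the three listed domains.

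Next I would invoke Lemma~\ref{result:jonesrigid2d} directly. For part (i), Lemma~\ref{result:jonesrigid2d}(i) identifies $\rr\mm(\Omega)\cap\hh = \mathrm{span}\{(0,1)^\transpose\}$ on the half-plane $\{x_1 > a\}$, so $\u_0=(0,1)^\transpose$ is a Jones mode with $\kappa=0$. Parts (ii) and (iii) follow in exactly the same way from Lemma~\ref{result:jonesrigid2d}(ii) and (iii), producing the translational mode $(1,0)^\transpose$ on the half-plane $\{x_2>b\}$ and the pure rotation $(x_2,-x_1)^\transpose$ on the disk $B(0,R)$, respectively. As a sanity check one can verify by direct computation that $\bvarepsilon(\u_0)=\zero$ in each case, which immediately gives $\tilde a(\u_0,\v)=0$.

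The only mildly delicate point I anticipate is reconciling the unbounded half-plane domains in parts (i) and (ii) with the framework of Section~\ref{sec:formulation}, which was set up for bounded Lipschitz domains. Since the theorem only asserts that the given $\u_0$ satisfies the weak eigenproblem and lies in the appropriate trace-constrained space, this can be bypassed by interpreting the statement in the natural pointwise or locally weak sense: $\u_0$ is constant, so $\bvarepsilon(\u_0)=\zero$ and $\u_0\cdot\n=0$ on the flat boundary by the explicit normal computation already carried out in Lemma~\ref{result:jonesrigid2d}. The disk case in (iii) is a genuine bounded Lipschitz domain and fits directly into the Section~\ref{sec:formulation} framework.
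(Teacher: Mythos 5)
Your proposal is correct and follows exactly the route the paper takes: the paper derives \autoref{thm:zero2d} by "combining \autoref{result:rigid} and \autoref{result:jonesrigid2d}," which is precisely your synthesis of the two lemmas (rigid motions annihilate $\tilde a(\cdot,\cdot)$ by \autoref{result:rigid}, and \autoref{result:jonesrigid2d} identifies the nonzero elements of $\rr\mm(\Omega)\cap\hh$ on each domain). Your closing remark about the unbounded half-plane domains in parts (i) and (ii) is a fair observation of a point the paper leaves implicit, but it does not change the argument.
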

In the 3D case, a rigid motion can be decomposed as
\begin{align*}
\u =&\, c_1(1,0,0)^\transpose + c_2(y,-x,0)^\transpose + c_3(z,0,-x)^\transpose + c_4(0,1,0)^\transpose\\ 
&+c_5(0,z,-y)^\transpose + c_6(0,0,1)^\transpose,
\end{align*}
for constants $c_1,\ldots,c_6\in\mathbb{R}$. In this case, we see that we have three possible rotations and three 
possible translations. This implies 
that we may have more eigenvectors associated to the zero eigenvalue in \autoref{eq:primal1} (equivalently 
\autoref{eq:primal2}).

The spaces $\tt(\Omega)$ and $\rr(\Omega)$ are defined as the spaces of  pure translations and pure 
rotations of $\Omega$ respectively. These allow the following decomposition of $\rr\mm(\Omega)$:
\begin{align*}
	\rr\mm(\Omega) = \tt(\Omega)\oplus\rr(\Omega),
\end{align*}
with trivial intersection. To characterize the elements of $\rr\mm(\Omega)\cap \hh$, it was shown in 
\cite{ref:bauer2016} that axisymmetric domains always support rotational displacements in $\rr(\Omega)$ which are 
tangential to the boundary.

3D versions of \autoref{result:jonesrigid2d} and \autoref{thm:zero2d} can be now stated.
\begin{theorem}\label{result:jonesrigid3d}
 Assume $n = 3$. If $\Omega$ is not bounded, then
 \begin{itemize}
  \item[(i)] $\rr\mm(\Omega)\cap\hh = {\rm span}\{(1,0,0)^\transpose\}$ if $\Omega := 
\{(x,y,z)^\transpose\in\rrr^3:\, by + cz < a, x\in\rrr\}$, for some $a,b,c\in\rrr$ such that $1 = b^2 
+ c^2$.
\item[(ii)] $\rr\mm(\Omega)\cap\hh = {\rm span}\{(0,1,0)^\transpose\}$ if $\Omega := 
\{(x,y,z)^\transpose\in\rrr^3:\, ax + cz < b, 
y\in\rrr\}$, for some $a,b,c\in\rrr$ such that $1 = a^2 + c^2$.
\item[(iii)] $\rr\mm(\Omega)\cap\hh = {\rm span}\{(0,0,1)^\transpose\}$ if $\Omega := 
\{(x,y,z)^\transpose\in\rrr^3:\, ax + by < c, z\in\rrr\}$, for some $a,b,c\in\rrr$ such that $1 = a^2 + 
b^2$.
\end{itemize}
In case $\Omega$ is bounded,
\begin{itemize}
\item[(iv)] $\rr\mm(\Omega)\cap\hh \subseteq \rr(\Omega)$ if 
and only if $\Omega$ is axisymmetric. 
 \end{itemize}
\end{theorem}
\begin{proof}
 (i), (ii) and (iii) readily follow by applying the same steps as in the proof of parts (i) and (ii) of 
\autoref{result:jonesrigid2d}. For part (iv), let $\u\in\rr\mm(\Omega)\cap\hh\subseteq \rr(\Omega)$, and assume 
$\Omega$ 
is a non-axisymmetric domain. Then, Korn's inequality (cf. inequality \autoref{eq:korns}) holds for $\u$, that is, there 
is 
a 
constant $c>0$ such that $c\|\u\|_1\leq\|\bvarepsilon(\u)\|_0$. However, $\u\in\rr(\Omega)$ is a rotation so 
$\bvarepsilon(\u) = \zero$ and $\|\u\|_1 \neq 0$. This means that $c\leq 0$, which is a contradiction. For the converse 
of part(iv), assume that $\Omega$ is axisymmetric, and $\rr(\Omega)\subseteq\rr\mm(\Omega)\cap\hh$ with strict 
inclusion. This implies there is an element of $\rr\mm(\Omega)\cap\hh$ which is a non-zero translation motion; 
however, the boundary condition on the normal trace prohibits such modes.
\end{proof}

From here, \autoref{result:rigid} and \autoref{result:jonesrigid3d} give the next result.
\begin{theorem}\label{thm:zero3d}
 {Assume $n = 3$ and suppose} $\kappa = 0$ is an eigenvalue of \autoref{eq:jones}. If $\Omega$ is not bounded, then its corresponding 
eigenvector is:
\begin{itemize}
 \item[(i)] $\u_0 = (1,0,0)^\transpose$ on $\Omega := 
\{(x,y,z)^\transpose\in\rrr^3:\, by + cz < a, x\in\rrr\}$, for some $a,b,c\in\rrr$ such that $1 = b^2 
+ c^2$.
\item[(ii)] $\u_0 = (0,1,0)^\transpose$ on $\Omega := \{(x,y,z)^\transpose\in\rrr^3:\, ax + cz < b, 
y\in\rrr\}$, for some $a,b,c\in\rrr$ such that $1 = a^2 + c^2$.
\item[(iii)] $\u_0 = (0,0,1)^\transpose$ on the domain $\Omega := 
\{(x,y,z)^\transpose\in\rrr^3:\, ax + by < c, z\in\rrr\}$, for some $a,b,c\in\rrr$ such that $1 = a^2 + 
b^2$.
\end{itemize}
In case $\Omega$ is bounded, its corresponding eigenvector is
\begin{itemize}
\item[(iv)] $\u_0 \in\rr(\Omega)$ whenever $\Omega$ is axisymmetric.
\end{itemize}
\end{theorem}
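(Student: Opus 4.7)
The plan is to assemble the two preceding lemmas into the 3D analogue of \autoref{thm:zero2d}. First I would unpack what it means for $\kappa=0$ to be an eigenvalue of \autoref{eq:primal1}: we seek a nonzero $\u\in\hh$ with $a(\u,\v)=0$ for every $\v\in\hh$. Testing with $\v=\u$ and using the identity \autoref{eq:aformdeviator} together with the coefficient condition $\lambda+\frac{2}{n}\mu>0$, the same computation that proves \autoref{result:rigid} yields $\bvarepsilon(\u)=\zero$. Hence $\u\in\rr\mm(\Omega)$, and combined with $\u\in\hh$ this gives $\u\in\rr\mm(\Omega)\cap\hh$. Conversely, any $\u\in\rr\mm(\Omega)\cap\hh$ has $\bvarepsilon(\u)=\zero$ and is admissible as a test function in $\hh$, so $a(\u,\v)=\tilde a(\u,\v)=0$ for all $\v\in\hh$. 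The zero eigenspace of \autoref{eq:primal1} thus coincides exactly with $\rr\mm(\Omega)\cap\hh$.

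With that equivalence in hand, parts (i), (ii), and (iii) are immediate: one simply substitutes the explicit half-space geometries given in the statement into the corresponding parts of \autoref{result:jonesrigid3d}, which identify $\rr\mm(\Omega)\cap\hh$ as the one-dimensional span of the indicated unit translation. For completeness I would verify in one case, say (i), that $\u_0=(1,0,0)^\transpose$ does satisfy $\u_0\cdot\n=0$ on the affine boundary $\{bx_2+cx_3=a\}$ whose unit normal is $(0,b,c)^\transpose$ (this is where the condition $b^2+c^2=1$ enters), thereby confirming $\u_0\in\hh$ and so a Jones mode with eigenvalue zero.

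For part (iv) I would invoke \autoref{result:jonesrigid3d}(iv): when $\Omega$ is axisymmetric, $\rr\mm(\Omega)\cap\hh\subseteq\rr(\Omega)$, and the discussion preceding that lemma guarantees that axisymmetric Lipschitz domains actually do support a tangential rotational displacement (the infinitesimal rotation about the axis of symmetry), so this intersection is nontrivial. Combined with the equivalence established in the first paragraph, this yields a Jones eigenfunction $\u_0\in\rr(\Omega)$ for $\kappa=0$.

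The only subtle point, and the one I would write out carefully, is the equivalence between $\{\u\in\hh:\tilde a(\u,\v)=0,\ \forall\v\in\hh\}$ and $\rr\mm(\Omega)\cap\hh$: \autoref{result:rigid} as stated tests $\v$ over the \emph{larger} space $\hh^1(\Omega)$, so one has to re-run the diagonal $\v=\u$ argument directly within $\hh$. Once this is observed, the theorem reduces to a catalogue of the intersections $\rr\mm(\Omega)\cap\hh$, which is precisely the content of \autoref{result:jonesrigid3d}, and the result is nothing more than the combination of these two lemmas.
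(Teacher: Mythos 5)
Your proposal is correct and follows essentially the same route as the paper, which offers no argument beyond the single sentence that \autoref{result:rigid} and \autoref{result:jonesrigid3d} together give the result. Your additional care in re-running the diagonal argument $\v=\u$ within $\hh$ (since $\zz$ in \autoref{result:rigid} is defined by testing over the larger space $\hh^1(\Omega)$) addresses a detail the paper silently glosses over, and your explicit verification that $\u_0\cdot\n=0$ on the half-space boundaries is exactly the computation underlying parts (i)--(iii) of \autoref{result:jonesrigid3d}.
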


In the case of the circle in 2D, the zero eigenvalue would lead to a bilinear form $a(\cdot,\cdot)$ that is not 
$\hh$-elliptic. For 
the 3D case, axisymmetric domains would lead to a loss of  $\hh$-ellipticity for the bilinear form 
$a(\cdot,\cdot)$. To overcome this issue, we 
add a shift to the formulation in \autoref{eq:primal1} to get the equivalent formulation: find 
$(\u,\kappa)\in\hh\times \rrr$ such 
that
\begin{align}
 \bar a(\u,\v) = (\kappa+1)(\u,\v)_0,\quad\forall\,\v\in\hh,\label{eq:shifted-form}
\end{align}
with $\bar a(\u,\v) := a(\u,\v) + (\u,\v)_0,$ for all $\u,\,\v\in\hh$. This new formulation is obviously 
$\hh$-elliptic 
since for any $\u\in\hh$ we have
\begin{align*}
 \bar a(\u,\v) = \mu\|\nabla\u\|_0^2 + (\lambda + \mu)\|{\rm div}\,\u\|_0^2 + \|\u\|_0^2 \geq 
\min\{\mu,1\}\|\u\|_1^2.
\end{align*}
The symmetry of $a(\cdot,\cdot)$ and the inner product $(\cdot,\cdot)_0$ along with the Rayleigh quotient (cf. 
\autoref{eq:rayleigh}) 
show that the eigenvalues $\kappa+1$ of \autoref{eq:shifted-form} are real and positive. 

We define the solution operator $\bar T:\ll^2(\Omega)\to\hh$ by $\bar T(\f) = \u$ such that
\begin{align}
 \bar a(\u,\v) = (\f,\v)_0,\quad\forall\,\v\in\hh,\label{eq:shifted-source}
\end{align}
Since $\bar a(\cdot,\cdot)$ is $\hh$-elliptic, the Lax-Milgram lemma shows that the restriction of $\bar T$ to $\hh$, $\bar T|_\hh$, is a well-defined linear 
operator and also gives the boundedness of $\bar T|_\hh$ in the $\ll^2$- and $\hh^1$-norms:
\begin{align}
 \|\bar T\f\|_1\leq \frac{1}{\min\{\mu,1\}}\|\f\|_l,\quad l\in\{0,1\},\,\,\forall\,\f\in\hh.\label{eq:bart-bound} 
\end{align}
{We again use the compact inclusion $\hh^1(\Omega)\hookrightarrow \ll^2(\Omega)$ to obtain the compactness of the inclusion $\hh\hookrightarrow\ll^2(\Omega)$.}
This compact inclusion and the bound of $\bar T|_\hh$ in 
\autoref{eq:bart-bound} with $l = 0$ imply that $\bar T$ is a compact operator (see \cite{ref:babuskaosborn1991}). 
Also, the symmetry of $\bar a(\cdot,\cdot)$ implies that $\bar T$ is a self-adjoint with respect to $\bar 
a(\cdot,\cdot)$. 
The 
Spectral Theorem for compact and self-adjoint bounded linear operators now implies the existence of positive eigenvalues 
$\{{\beta_n}\}_{n\in\mathbb{N}}$ 
and eigenfunctions $\{\u_n\}_{n\in\nnn}$ such that $\bar T\u_n = 
{\beta_n}\u_n$ and ${\beta_n}\to 0$. Note that $\bar T\u_n = {\beta_n}\u_n$ is a solution of 
\autoref{eq:shifted-source} 
if 
and only if $(\kappa_n,\u_n)$ solves \autoref{eq:primal1} with ${\beta_n} := \frac{1}{\kappa_n + 1}$. Since $\kappa_n\geq0$ for any $n\in\nnn$, we see that $\{{\beta_n}\}_{n\in\mathbb{N}}\subseteq (0,1]$. We 
summarize these 
properties in the following main result.
\begin{theorem}\label{result:nonnegativeevs}
 The point spectrum of $\bar T|_\hh$ is decomposed as follows: $\{{\beta_n}\}_{n\in\nnn}\cup\{1\}$, where
 \begin{enumerate}
  \item the associated eigenspace of the eigenvalue 1 is given by \autoref{result:jonesrigid2d} in 2D and 
\autoref{result:jonesrigid3d} in 3D;
  \item $\{{\beta_n}\}_{n\in\nnn}\subseteq (0,1)$ is a sequence of eigenvalues of $\bar T$ with 
finite multiplicity that converges to 0 and their corresponding eigenfunctions lie in $\hh$. 
 \end{enumerate}
 {In addition, 
eigenfunctions corresponding to different eigenvalues are orthogonal in the usual $\ll^2$-inner product.}
\end{theorem}

{\subsection{Elastic bodies with variable density}\label{subsection:vardensity}
In many realistic applications the density $\rho>0$ varies. In this section we discuss the existence of Jones 
eigenpairs for variable density. Under the same assumptions on $\Omega$ as given at the beginning of 
\autoref{sec:formulation}, we consider a variable density $\rho>0$ belonging to $L^\infty(\Omega)$. The weak formulation 
in \autoref{eq:primal1} would then be: find Jones eigenpairs $\u\in\hh$ and $w^2\in\rrr$ such that
\begin{align}
 a(\u,\v) = w^2\,b(\u,\v),\quad \forall\,\v\in\hh,
\end{align}
where the bilinear form $b(\cdot,\cdot)$ is defined as
\begin{align*}
 b(\u,\v) := \int_\Omega \rho\, \u\cdot\v,\quad\forall\,\u,\,\v\in\hh.
\end{align*}
Since $\rho\in L^\infty(\Omega)$, we have that $|b(\u,\v)|\leq \|\rho\|_\infty \|\u\|_0\|\v\|_0,$ for all 
$\u,\,\v\in\hh$. If $L^2_\rho(\Omega)$ denotes the space of functions in $L^2(\Omega)$ whose weighted inner product is 
finite
(with the variable density as the corresponding weight), we note that the inclusion $\hh\hookrightarrow 
L^2_\rho(\Omega)$ is compact.}

{The fact that the density varies does not change the positiveness of $a(\cdot,\cdot)$ (equivalently 
$\tilde a(\cdot,\cdot)$) in $\hh$. We then have that the $\hh$-ellipticity in \autoref{eq:tildeacoercive} holds true in 
this case as well.}

{We can conclude that the existence of Jones eigenpairs is guaranteed by \autoref{result:positiveevs} and 
\autoref{result:rigid} for the non-axisymmetric and the axisymmetric case, respectively. However, eigenfunctions 
corresponding to different eigenvalues would be orthogonal in the weighted inner product as defined by the bilinear 
form 
$b(\cdot,\cdot)$.}

{We end} this section by summarizing our main results. For axisymmetric domains, Jones modes exist and include 0 
as 
an eigenvalue with certain rigid motions as permissible eigenmodes. For non-axisymmetric Lipschitz domains which are 
bounded, there are countably many positive Jones eigenvalues whose only accumulation point is at infinity. 
{Finally, if a variable density of the elastic body is assumed, then the existence of eigenpairs falls into the setup of \autoref{result:positiveevs} and \autoref{result:nonnegativeevs}, depending on the nature of the shape of 
the 
domain.}

{In the forthcoming section we use a standard conforming finite element method to approximate Jones eigenmodes. 
As per usual of this approach, one has that at a given level of refinement, curved boundaries are approximated by 
straight edges and/or faces. Numerically speaking, this means that, in the case of an axisymmetric domain, the 
numerical method 
may not compute the zero eigenvalue which should be a Jones eigenvalue as shown in \autoref{result:nonnegativeevs}. As 
we 
will see in \autoref{sec:numerics}, even in the most simple case the standard conforming finite element method used for 
non-axisymmetric domain might not be a suitable choice to approximate Jones eigenpairs on axisymmetric domains. 
}

\section{Discrete weak formulations}\label{sec:discrete} 
{The preceding discussion shows that the behaviour of Jones modes for axisymmetric and non-axisymmetric domains is different. We now present discretization strategies for both. Even though two discrete formulations are given for the Jones eigenproblem, we only provide a priori error estimates on polygons or polyhedra since, as the results presented in \autoref{subsection:resultsdisk} below suggest that the discrete formulation in \autoref{eq:discrete1} does not appear to approximate the Jones eigenvalues correctly on curvilinear domains.}

We first let $\Omega$ be a {polygonal/polyhedral domain in $\rrr^n,$ with} $n\in\{2,3\}$, { and let} $\ttt_h$ be a regular triangulation by triangles ({tetrahedra in 3D}) of 
$\overline{\Omega}$ with meshsize $h$. For a given integer {$k\geq1$}, we consider the 
space $\pp_k(T)$ as the set of all vector polynomials of degree at most $k$ defined on 
$T\in\ttt_h$. Define the space
\begin{align*}
 \hh_h:=\Big\{\v_h\in {\cc}(\overline{\Omega}):\, \v_h|_T\in\pp_k(T),\,\,\forall\, 
T\in\mathcal{T}_h\Big\}\cap\hh,
\end{align*}
{where $\cc(\overline{\Omega})$ is the space of continuous vector fields defined over $\overline{\Omega}$. 
Consider} the {discrete weak formulation}: find $\u_h\in\hh_h$ and $\kappa_h\in\rrr$ such that
\begin{align}
 a(\u_h,\v_h) = \kappa_h\, (\u_h,\v_h)_0,\quad\forall\, \v_h\in \hh_h.\label{eq:discrete1}
\end{align}
{From the discussion in Sections 2 and 3},  
for non-axisymmetric domains the bilinear forms $\tilde a(\cdot,\cdot)$ and $a(\cdot,\cdot)$ 
{coincide} in 
$\hh$. We 
only 
provide approximation results for the eigenvalue problem \autoref{eq:primal1} as they readily apply to the formulations 
\autoref{eq:primal2} and \autoref{eq:shifted-form}.

Since $\hh_h$ is a subspace of $\hh$, the coercivity of $a(\cdot,\cdot)$ (cf. inequality \autoref{eq:tildeacoercive}) in 
$\hh$ implies 
its
$\hh_h$-ellipticity. We can define a discrete solution operator $T_h$ as follows:
\begin{align*}
 T_h:\,&\ll^2(\Omega)\to \hh_h\\
     &\f \to T_h(\f) :=\u_h,
\end{align*}
where $\u_h\in \hh_h$ is the solution of the problem
\begin{align*}
 a(\u_h,\v_h) = (\f,\v_h){_0}\quad\forall\, \v_h\in \hh_h.
\end{align*}
{Analogous to the situation of the continuous weak formulation}, the pair $(\kappa_h,\u_h)$ solves \autoref{eq:discrete1} if and only if $T_h\u_h = 
\sigma_h\u_h$ and $\sigma_h := \frac{1}{\kappa_h}$. Also, the restriction operator 
$T_h|_{\hh_h}:\,\hh_h\to \hh_h$ is self-adjoint with respect to $a(\cdot,\cdot)$ and 
{$(\cdot,\cdot)_0$}. We thus have the following result concerning the 
spectrum of $T_h|_{\hh_h}$ on {non-axisymmetric Lipschitz domains}.
\begin{theorem}\label{thm:discrete-spectrum}
 The spectrum of $T_h|_{\hh_h}$ consists of $M_h := \dim (\hh_h)$ 
eigenvalues. Moreover,
\begin{enumerate}
 \item the point spectrum consists of {positive} eigenvalues $\{\sigma_{h,k}\}_{k=1}^{M_h}$ 
counted with their multiplicities;
 \item $\sigma_h = 0$ is not an eigenvalue of $T_h$.
\end{enumerate}
\end{theorem}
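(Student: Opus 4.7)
The plan is to leverage three facts that transfer directly from the continuous setting to the discrete one: (a) conformity $\hh_h \subset \hh$ preserves the coercivity of $a(\cdot,\cdot)$, since the Korn-type inequality \autoref{eq:korns} for non-axisymmetric domains together with \autoref{eq:strain-bound} holds on all of $\hh$; (b) the bilinear forms $a(\cdot,\cdot)$ and $(\cdot,\cdot)_0$ are symmetric, so $T_h|_{\hh_h}$ is self-adjoint with respect to both; and (c) $\hh_h$ is finite-dimensional with $\dim\hh_h = M_h$. There is essentially no new analysis to do; the task is really to transcribe the continuous argument behind \autoref{thm:cont-spectrum1} into the polyhedral discrete setting and show nothing is lost.

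First I would verify that $T_h|_{\hh_h}:\hh_h\to\hh_h$ is well defined. The coercivity of $a(\cdot,\cdot)$ on $\hh$ restricts to $\hh_h$, and Lax-Milgram on a finite-dimensional space yields unique solvability of the discrete source problem. The Spectral Theorem for self-adjoint operators on a finite-dimensional inner-product space then immediately produces an $a$-orthogonal basis of eigenvectors of $T_h|_{\hh_h}$, hence exactly $M_h$ real eigenvalues $\{\sigma_{h,k}\}_{k=1}^{M_h}$ counted with multiplicity.

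To confine these eigenvalues to $(0,1)$, I would test the relation $T_h\u_h=\sigma_h\u_h$ against $\u_h$ itself in the definition of $T_h$:
\[
a(T_h\u_h,\u_h)=(\u_h,\u_h)_0 \quad\Longrightarrow\quad \sigma_h\,a(\u_h,\u_h)=\|\u_h\|_0^2,
\]
so $\sigma_h$ equals the Rayleigh quotient $\|\u_h\|_0^2/a(\u_h,\u_h)$. Coercivity forces $a(\u_h,\u_h)>0$ whenever $\u_h\neq\zero$, giving $\sigma_h>0$; the upper bound $\sigma_h<1$ follows from the same estimate used for $\tilde T$ in \autoref{eq:tildet-bound} applied to the conforming subspace $\hh_h$.

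For item (2), I would argue by contradiction: if $\sigma_h=0$ were an eigenvalue with eigenvector $\u_h\in\hh_h\setminus\{\zero\}$, then $T_h\u_h=\zero$, i.e., $(\u_h,\v_h)_0=0$ for every $\v_h\in\hh_h$. Choosing $\v_h=\u_h$ yields $\|\u_h\|_0=0$, contradicting $\u_h\neq\zero$. The only place where care is genuinely needed is to ensure that coercivity and the Korn-type bound survive the conforming restriction to $\hh_h$; the assumption that $\Omega$ is a non-axisymmetric polyhedron is precisely what makes this automatic via the results of \cite{ref:bauer2016}, so no discrete Korn inequality, inverse estimate, or new functional-analytic machinery is required.
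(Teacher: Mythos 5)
Your proposal is correct in structure and is essentially the argument the paper intends: the paper states \autoref{thm:discrete-spectrum} without a formal proof, and the surrounding discussion (conformity $\hh_h\subset\hh$ inheriting the coercivity \cref{eq:tildeacoercive} from the Korn inequality \cref{eq:korns}, self-adjointness of $T_h|_{\hh_h}$, and finite dimensionality) is exactly the outline you give; your Rayleigh-quotient identity $\sigma_h\,a(\u_h,\u_h)=\|\u_h\|_0^2$ and the contradiction argument for item (2) are the right way to make it explicit.

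The one thin spot is the upper bound $\sigma_{h,k}<1$: the estimate \cref{eq:tildet-bound} bounds $\|T_h\f\|_0$ by $\frac{C^{-2}}{\min\{2\mu,\,n(\lambda+2\mu/n)\}}\|\f\|_0$, and nothing forces that constant to be at most $1$, so "the same estimate applied to $\hh_h$" does not by itself confine the spectrum to $(0,1)$ rather than $(0,\infty)$. Your Rayleigh quotient gives $\sigma_h<1$ only when $a(\u_h,\u_h)>\|\u_h\|_0^2$, i.e.\ when the coercivity constant exceeds $1$. This imprecision is inherited from the paper itself (the same unproved claim $\{\beta_n\}\subseteq(0,1)$ appears for the continuous operator $\tilde T$), so it is not a defect of your approach relative to the source, but if you want the interval $(0,1)$ you must either assume the coercivity constant is at least $1$ or replace $(0,1)$ by $(0,\|T_h\|]$.
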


Concerning the approximation properties of this scheme, as described in \cite{ref:babuskaosborn1991}, we have the 
following error bounds for the eigenvalues and eigenfunctions of \autoref{eq:discrete1}:
\begin{align}
 \frac{|\kappa-\kappa_h|}{|\kappa|} \leq\,C\epsilon_h(\kappa)^2,\quad \|\u-\u_h\|_1 
\leq\,C\epsilon_h(\kappa),\label{eq:evs-error}
\end{align}
where the term $\epsilon_h$ is defined as
\begin{align*}
 \epsilon_h(\kappa) := \sup_{\u\in\hh(\kappa)}\inf_{\u_h\in\hh_h}\|\u-\u_h\|_1.
\end{align*}

For a given $\kappa\in\rrr$, {the subset $\hh(\kappa)$ of $\hh$ is defined} as
\begin{align*} 
\hh(\kappa):= \Big\{\u\in\hh:\, \u\,\,\text{solves \autoref{eq:primal1} with eigenvalue $\kappa$}, \|\u\|_0 
= 1\Big\}.
\end{align*}
That is, $\hh(\kappa)$ is the eigenspace of the eigenvalue $\kappa$, containing normalized eigenvectors (in the 
$\ll^2$-norm).

The upper bounds for the errors in \autoref{eq:evs-error} hold for eigenvalues with multiplicity greater than 1. In 
fact, 
if $\kappa$ is an eigenvalue of \autoref{eq:primal1} of multiplicity $M\in\nnn$ with $\u_m\in\hh(\kappa)$, for all $m = 
1,\ldots,M$, then there is a {unit vector (in the $\ll^2$-norm)} $\w$ in $\gen\{\u_1,\ldots,\u_M\}$ and a 
vector field 
$\w_h$ in the span of $\{\u_{1,h},\ldots,\u_{M,h}\}${, with $\|\w_h\|_0 = 1$,} such that
\begin{align*}
 \|\w-\w_h\|_1 
\leq\,C\epsilon_h(\kappa),
\end{align*}
where the vectors $\u_{1,h},\ldots,\u_{M,h}$ solve \autoref{eq:discrete1} with $\kappa_h$.

Regarding the approximation estimates of the finite element discretization, for a regular triangulation, the 
interpolation error estimate for the Lagrange finite elements is
\begin{align*}
 \|\u-\ii_h\u\|_s\leq C{h^{\min\{k,t\}+1-s}|\u|_{t+1},\quad\forall\,\u\in\hh^{t+1}(\Omega),}
\end{align*}
where $\ii_h$ is the vector version of the usual Lagrange interpolant (componentwise), and $t,s\geq 0$. Using this interpolation error estimate in the error bound for 
$\kappa$ in \autoref{eq:evs-error}, we have
\begin{align}
 \frac{|\kappa-\kappa_h|}{|\kappa|} \leq\,c\,{h^{2\min\{k,t\}}|\u|_{t+1}^2,\quad t>0}.\label{eq:decayevs}
\end{align}
Note that the rate of convergence of the eigenvalues $\kappa_h$ depends on the regularity of its corresponding 
eigenvector $\u\in{\hh^{t+1}(\Omega)}$, and the error in the computed eigenvectors would decay as 
{$h^{\min\{k,t\}}$}.

{Clearly, Jones eigenpairs form a subset of the eigenpairs of the Lam\'e operator with traction conditions (see eigenproblem given in \autoref{eq:neumann}). It is known that the eigenvectors of the latter problem posses extra regularity, depending on the vertices/edges of the domain. Concretely, it is shown in  \cite{ref:grisvard1989} (see also \cite{ref:nicaise1992,ref:costabel1995,ref:rossle2000}) that the tractions eigenfunctions belong to $\hh^{1+\epsilon}(\Omega)$, for some $\epsilon\in(0,1]$.} { For a polygonal/polyhedral domain, we have therefore that the Jones eigenvectors belong to $\hh^{1+\epsilon}(\Omega)$, where $\epsilon\in(0,1]$ is the first positive root $r$ solving the following nonlinear equation \cite{ref:grisvard1989,ref:nicaise1992}:
	\begin{align}
		r^2\sin^2(\theta) = \sin^2(r\theta),\,\,r\in\rrr,\label{eq:regularity-eqs}
	\end{align}
	where $\theta$ represents the largest of the interior angles of $\Omega$. We notice that in the case of Neumann boundary conditions, the regularity of the eigenvectors does not seem to be affected by the Lam\'e parameters. Moreover, we note that $r = 1$ is always a solution to \autoref{eq:regularity-eqs}; this means that the best regularity one can obtain is for the Jones eigenvectors to belong to $\hh^2(\Omega)$. }

{In the case of domains with a smooth boundary, the problem formulation must be modified. The approximation of the Jones eigenvalues on this domain is not guaranteed when using the discrete formulation given by} \autoref{eq:discrete1}; {numerical results demonstrating this are presented in} \autoref{subsection:resultsdisk} {below. Instead, a mixed formulation may be more appropriate. }
	
{If $\Omega$ is axisymmetric, the analysis in \autoref{sec:rigidmotions} implies that a shift needs to be added in 
	\autoref{eq:discrete1}. 
	The essential condition on the normal trace of the displacement is added to the formulation in \autoref{eq:primal1}. 
	The 
	equivalent mixed formulation would then be: find $(\u,p)\in \hh^1(\Omega) \times H^1(\Omega)$ such that}
	\begin{align}\label{eq:mixedform}
		\begin{array}{rcr}
			\tilde a(\u,\v) + \langle \v\cdot\n,p\rangle_{1/2} =& \,\kappa\,(\u,\v)_0,&\forall\,\v\in\hh^1(\Omega),\\
			\langle \u\cdot\n,q\rangle_{1/2} - \eta\,(p,q)_0 =&\, 0,&\forall\,q\in H^1(\Omega),
		\end{array}
	\end{align}
	{where $\eta\geq0$ is a stabilization constant. For $\eta = 0$, this formulation is obviously equivalent to 
	\autoref{eq:primal1}. The stabilization 
	term $\eta\,(p,q)_0$ is added only for implementation purposes as the Lagrange multiplier $p$ is being defined on 
	the 
	whole domain $\Omega$. 
	Note that this implementation does not require the use of a penalty method to introduce Dirichlet boundary data as we 
	needed for the original formulation \autoref{eq:discrete1}.  A full 
	error analysis of this formulation on curvilinear domains will be presented in a future work.}

\section{Numerical results}\label{sec:numerics}
We now present some numerical results that support the theoretical {results established} in the 
previous 
sections.

\subsection{Convergence studies for {polygonal and} polyhedral domains}
We {first present three} numerical examples {on non-smooth domains. We consider three domains:} the square 
$\Omega_1 := (-1,1)^2$, the L-shape $\Omega_2 := \Omega_1\backslash [0,1)^2$, and the unit cube $\Omega_3 := 
(0,1)^3$. We recall that the rate of convergence of the discretization depends entirely on the regularity of the eigenvectors in \autoref{eq:jones-modes}. Using the discussion of the previous section and \autoref{eq:regularity-eqs} we see that on the square and on the cube, the Jones eigenmodes belong to $\hh^2(\Omega)$. Combined with  \autoref{eq:decayevs}, the error in the discrete eigenvalues should decay as $h^{2}$ if we use piecewise linear elements. 
For the L-shape domain $\Omega_2$, the decay rate will be slower due to the lower regularity. From \autoref{eq:regularity-eqs} with $\theta = \frac{3\pi}{2}$, we see that its first non-zero root is approximately $r_1:=0.5445$. This means that the eigenfunctions on the L-shape belong to $\hh^{1+r_1}(\Omega_2)$.

In all the experiments we have used $\pp_1$-conforming elements {to approximate the eigenpairs}
on a sequence of regular (not necessarily uniform) meshes. The true eigenvalues were used as exact solutions on the square (cf. \autoref{subsec:harge}) and cube, and a $\pp_2$-conforming approximation  on a very fine grid was used to obtain reference solutions on the L-shape. These experiments were implemented in FreeFem++ \cite{ref:freefem}. {For completeness, we remark that Dirichlet boundary conditions are added to the system as a penalty term in the standard manner. We shall examine the numerical convergence rates in terms of the degrees of freedom (DOFs), which for a $P1$ element scales as the number of vertices in the mesh. Recall that in this case the meshsize  $h$ scales as DOFs$^{-1/n}$, $n\in\{2,3\}$, and therefore the predicted rates of convergence for the eigenvalues on $\Omega_1$ and $\Omega_3$ are DOFs$^{-1}$ and DOFs$^{-2/3}$ respectively.}

\graphicspath{{./images/cv-study/}}
\begin{figure}[!ht]
\centering\includegraphics[width = .5\textwidth, 
height=0.2\textheight]{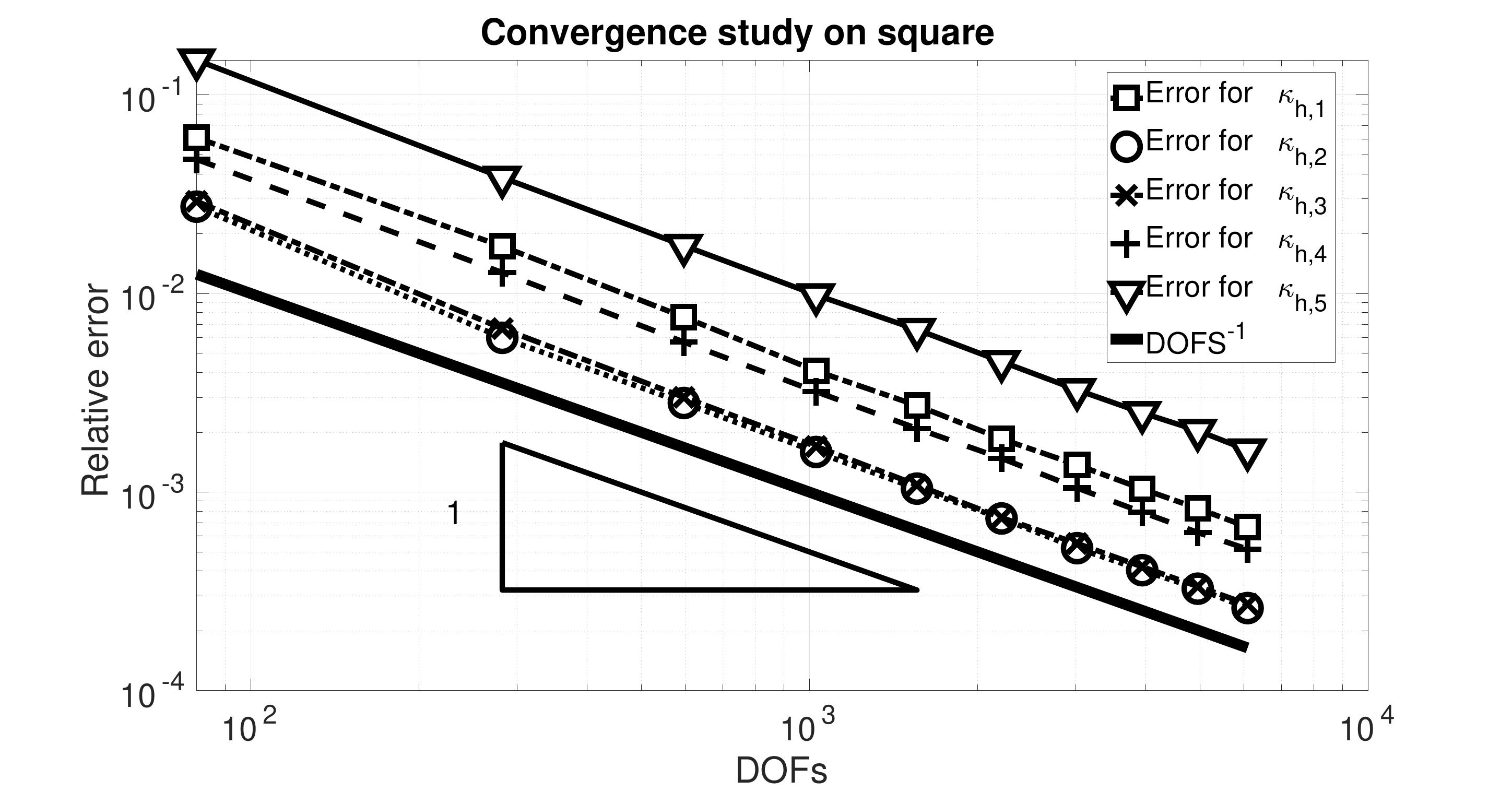}\includegraphics[width = .5\textwidth, 
height=0.2\textheight]{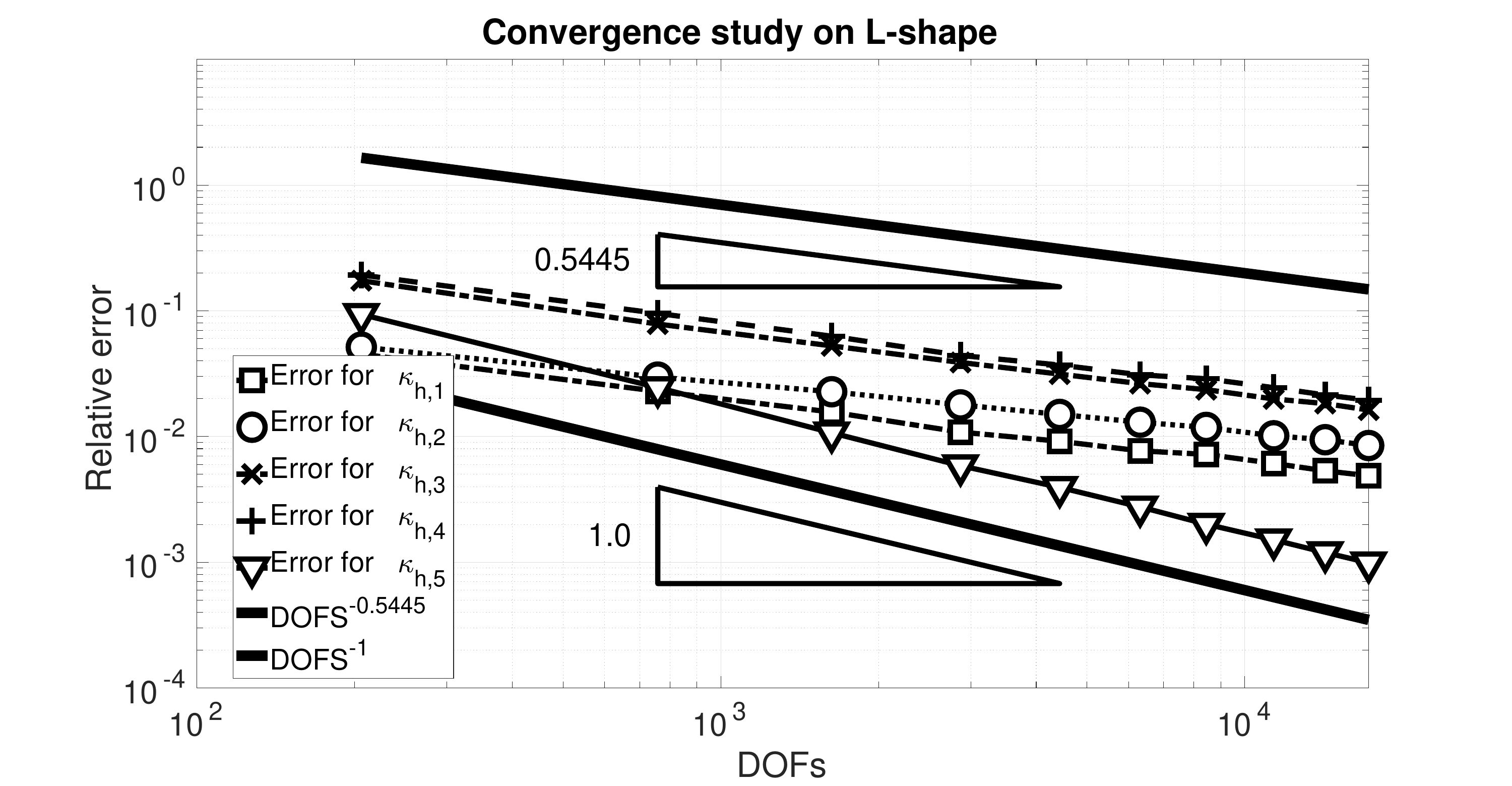}

\includegraphics[width = .5\textwidth, 
height=0.2\textheight]{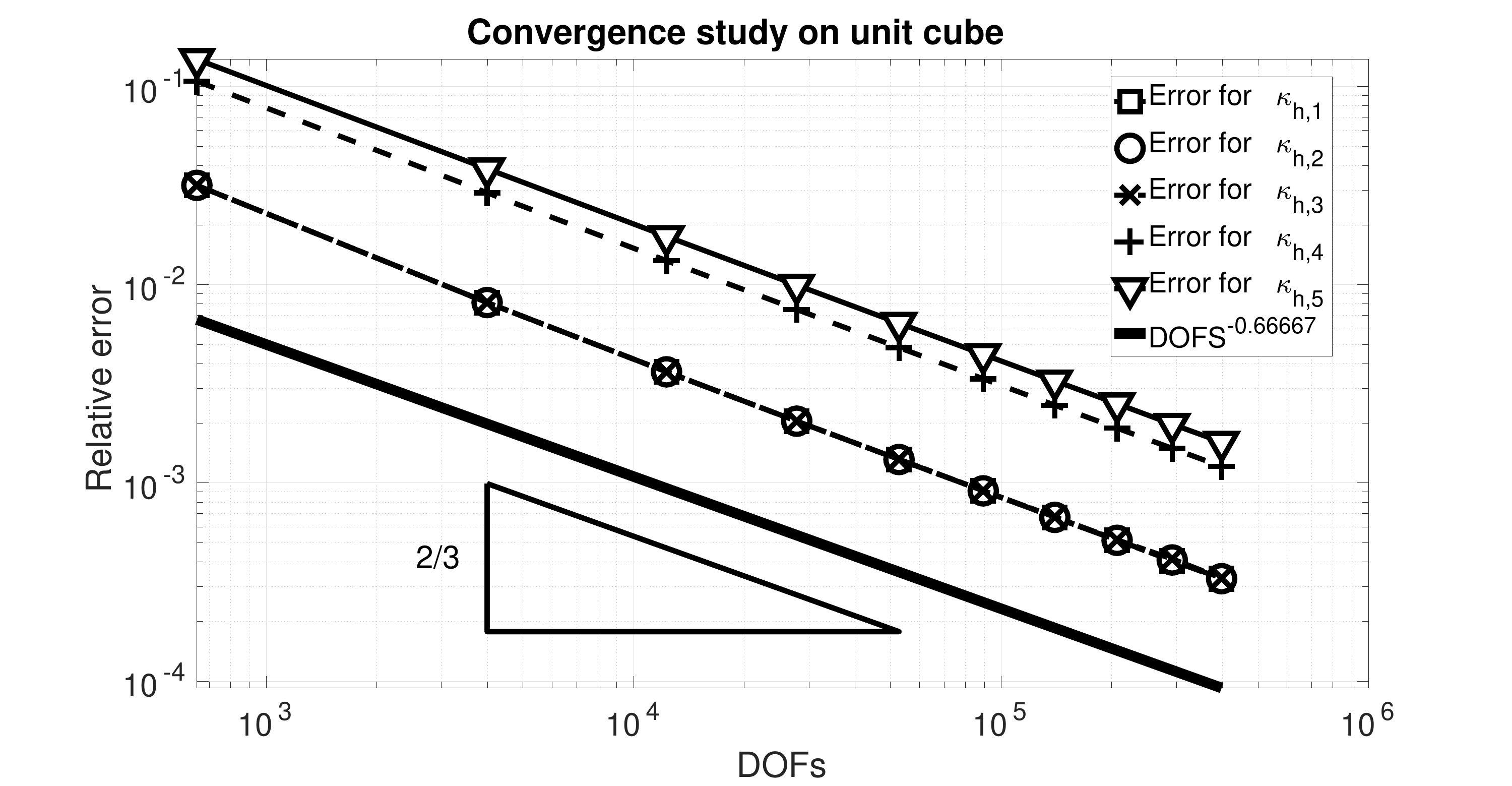}
\caption{Convergence study for the first 5 eigenvalues of 
\autoref{eq:jones} on $\Omega_1$ (top-left), $\Omega_2$ (to-right), and $\Omega_3$ 
(bottom). }\label{fig:cvhistory}
\end{figure}

 The convergence history of the first 5 eigenvalues of \autoref{eq:primal1} on $\Omega_1$ is shown in 
\autoref{fig:cvhistory} top-left for the parameters $\mu = 10,\, \lambda = 1,$ and $\rho = 12$. We observe that the discretization
method exhibits the expected convergence behaviour as we {increase the number of degrees of freedom. We can see that in all 5 cases the error goes down with a similar rate.}

For our second example on the L-shaped domain $\Omega_2$, we set the parameters $\mu = \lambda = \rho = 1$. As expected, the rate of convergence reflects the poor regularity of {some} eigenfunctions. However, as for other elliptic operators, it seems that some eigenfunctions do not capture the corner singularity at the origin. This is the case for  $\kappa_{h,5}$ (\autoref{fig:cvhistory} bottom). We see that the relative error for this eigenvalues decays as DOFs$^{-1}$, suggesting that its corresponding eigenfunctions belong to $\hh^2(\Omega_2)$.  

{In the last example, we consider the unit cube $\Omega_3$ with parameters $\mu = 10$, 
$\lambda = 1$, and $\rho = 12$. \autoref{fig:cvhistory} (top right) shows the convergence 
history of the first five Jones eigenvalues on $\Omega_3$. We notice that computed eigenvalues 
converge at the predicted rate as the triangulation gets refined. }

\subsection{An example of variable density}\label{subsection:squarevardensity}
{Jones modes also can be found when the material density $\rho$ varies. As an example, 
	we consider the unit square $\Omega_1$ with a variable density $\rho(x,y) = 12|x-0.3||y-0.25|$
and Lam\'e parameters $\mu=\lambda=1$.  The rate of convergence of the first five Jones eigenvalues is shown in \autoref{fig:cv_study_dofs_square-variabledensity} top. We see that the error of the computed eigenvalues decays at the same rate as for the case with constant density. The two eigenfunctions shown in \autoref{fig:cv_study_dofs_square-variabledensity} middle and bottom correspond to distinct eigenvalues; the weighted $\ll^2_\rho$-inner product of these eigenfunctions is zero as discussed in \autoref{subsection:vardensity}.}

\graphicspath{{./images/}}
\begin{figure}
\centering
\includegraphics[width=1.0\textwidth, height=.35\textheight]{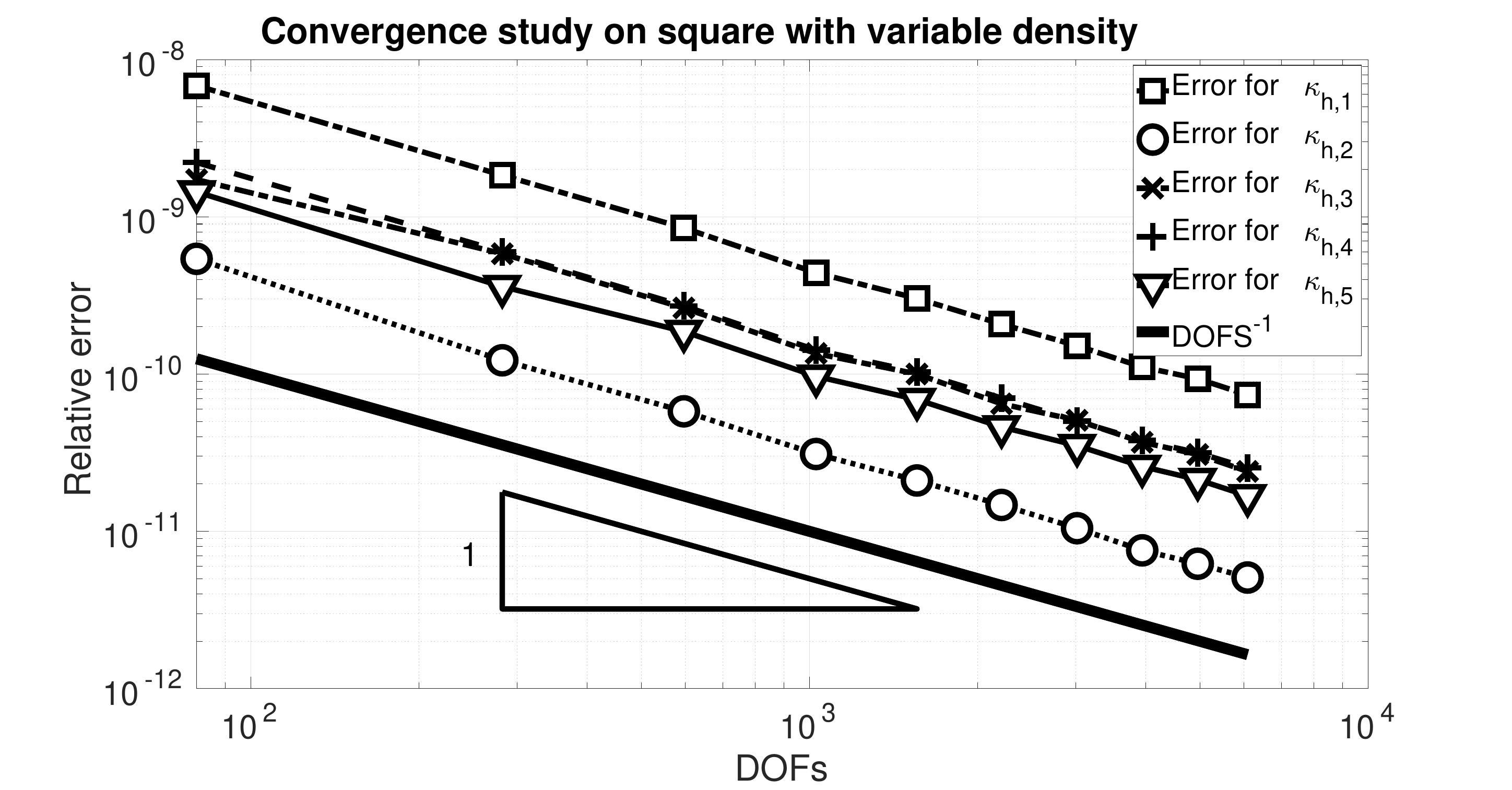}\\
\includegraphics[width=0.5\textwidth, height=.2\textheight]{/EigenvectorPlots/jonesEw0SquareVarDensityxcomp.jpg}\includegraphics[width=0.5\textwidth, height=.2\textheight]{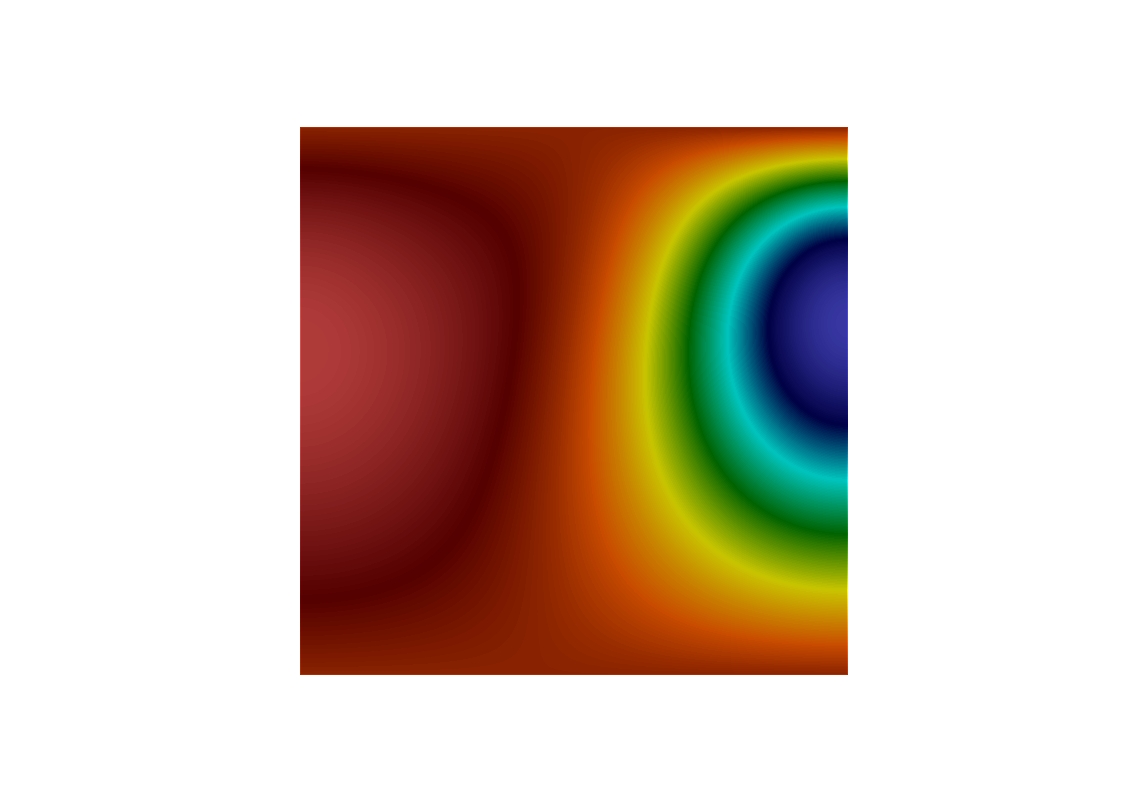}
\includegraphics[width=0.5\textwidth, height=.2\textheight]{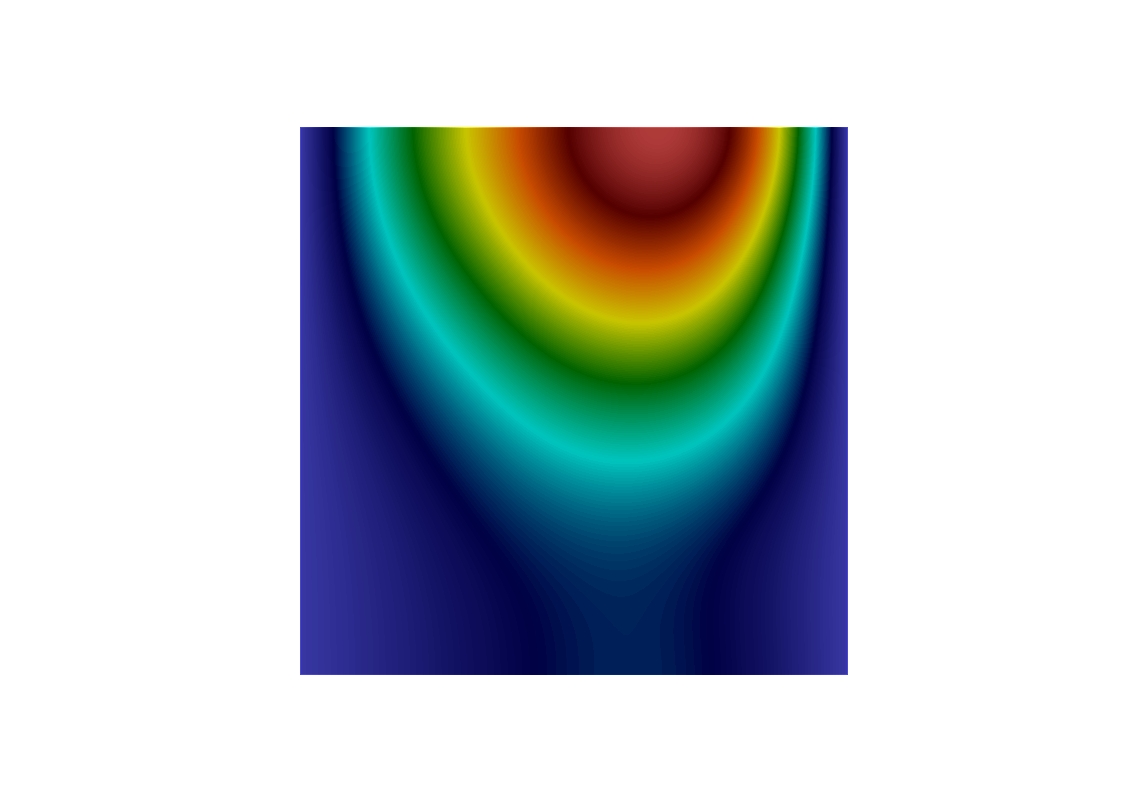}\includegraphics[width=0.5\textwidth, height=.2\textheight]{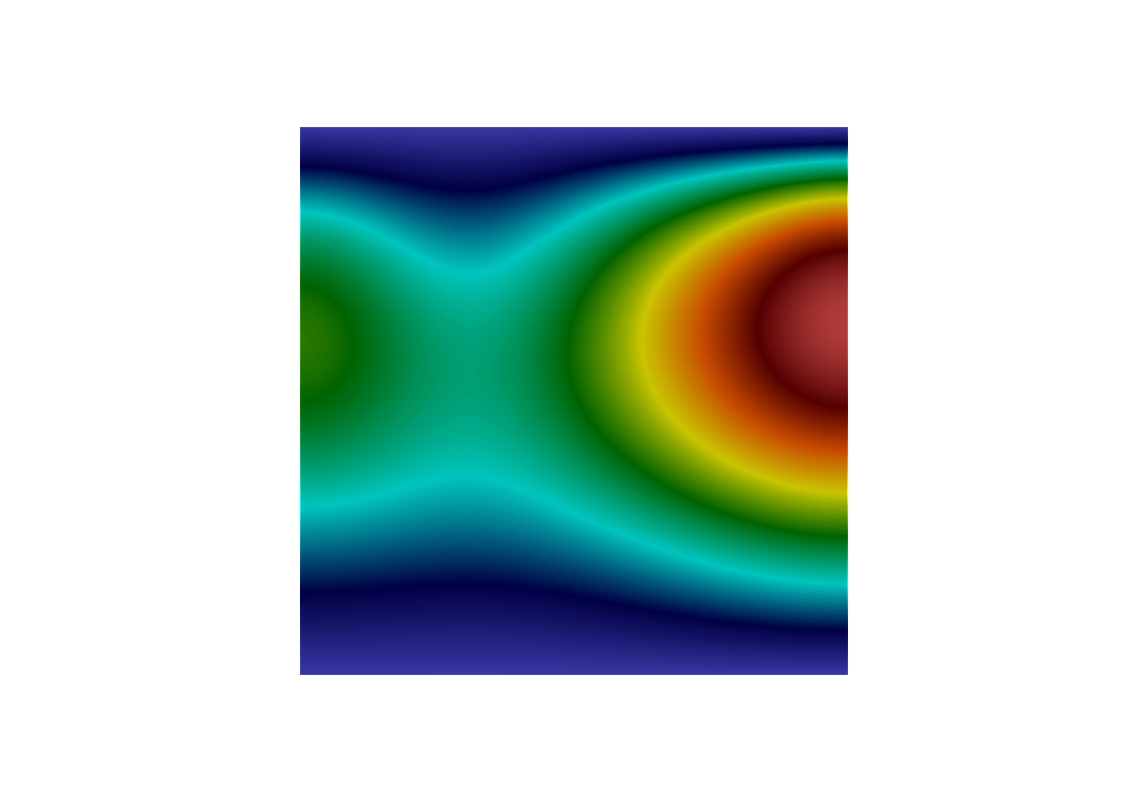}
\caption{Convergence study (top) of the first 5 Jones on the unit square $\Omega_1$ with variable density $\rho(x,y)$. The figures in the middle and bottem represent the x-component (left) and y-component (right) of the Jones eigenmodes associated with $\kappa_{1,h}$ and $\kappa_{2,h}$.}
\label{fig:cv_study_dofs_square-variabledensity}
\end{figure}

\subsection{Jones modes on a disk}\label{subsection:resultsdisk}
We next
present numerical results demonstrating conforming discretizations of both the primal formulation \autoref{eq:primal1} and the mixed formulation \autoref{eq:mixedform} on the disk, where we have used regular triangles.
We consider the unit disk {$\Omega_4 := B(0,1)$ centered at the origin,} with parameters $\mu = 1$, 
$\lambda = 1$, and $\rho = 1$. As discussed in \autoref{sec:rigidmotions}, an eigenmode associated to the 
eigenvalue $\kappa_1 = 0$ is added on the circle (2D case) as a consequence of the symmetry of the domain and the 
condition on the normal trace of the displacement on the boundary. \autoref{fig:spurious} shows the eigenfunction 
$\u_1$ associated to $\kappa_1$. We can see that this displacement is a rigid mode with a pure 
tangential displacement towards the boundary. 

\graphicspath{{./images/EigenvectorPlots/}}
\begin{figure}[!ht]
\centering\includegraphics[width = .5\textwidth, 
height=0.2\textheight]{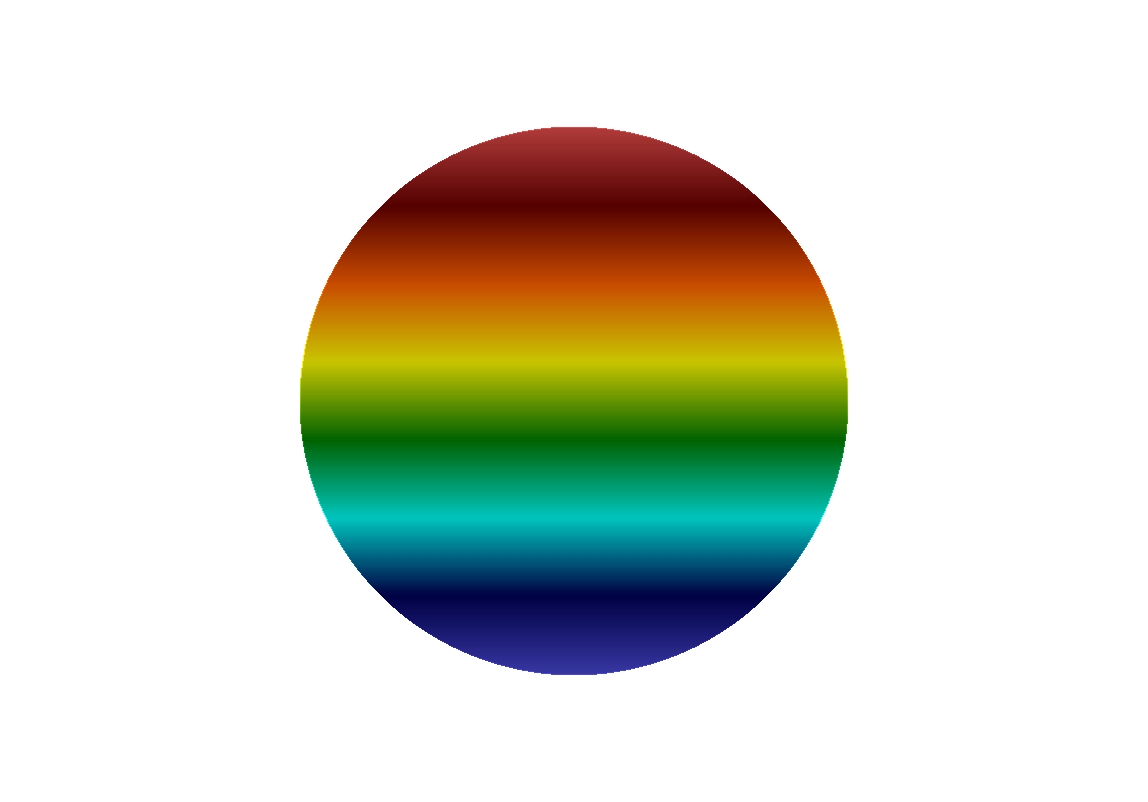}\includegraphics[width = .5\textwidth, 
height=0.2\textheight]{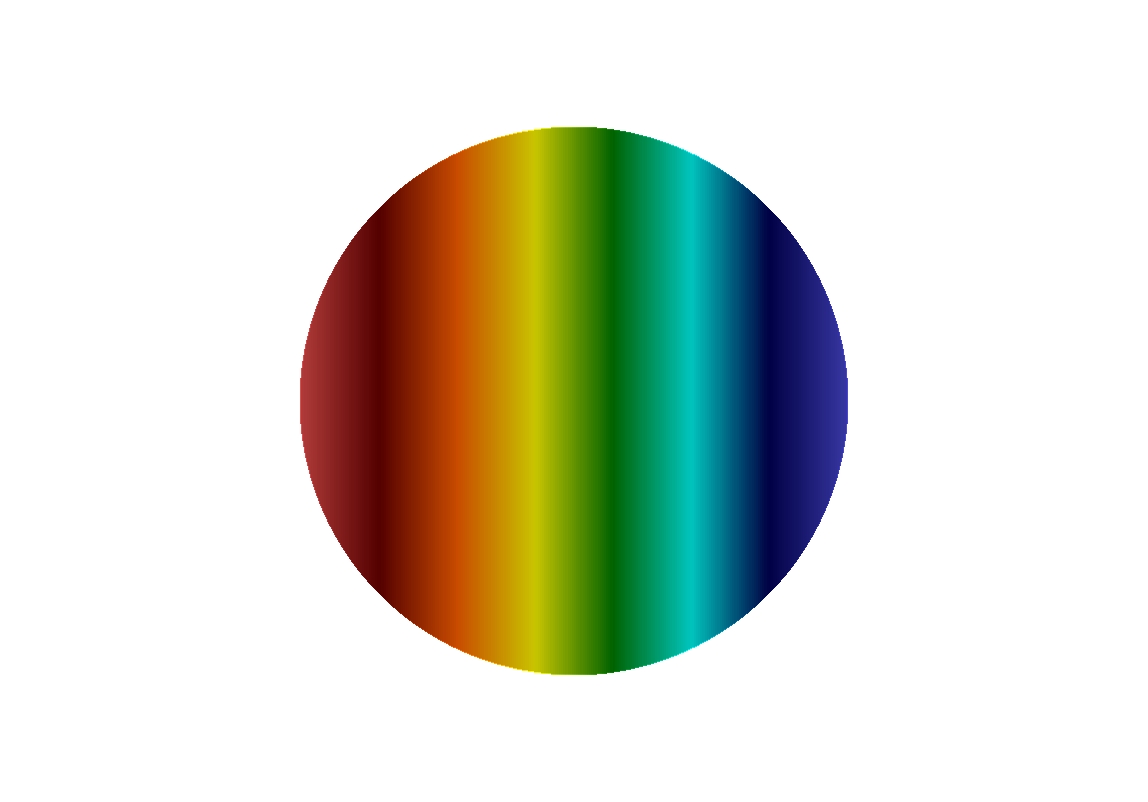}
\caption{$x$-component (left) and $y$-component (right) of the eigenfunction $\u_{h,1}$ 
on the unit disk associated to the eigenvalue $\kappa_1 = 0$. The computations are performed on a grid comprising of 
triangles.}\label{fig:spurious}
\end{figure}

{In \autoref{tab:primalp1}-\autoref{tab:mixedp2}, we present the first 5 Jones eigenmodes, computed on the same meshes (over 5 levels of refinement). We observe that the computed spectrum using the primal formulation \autoref{tab:primalp1}-\autoref{tab:primalp2} misses the first three Jones eigenvalues that the mixed formulation approximates \autoref{tab:mixedp1}-\autoref{tab:mixedp2}. The mixed formulation accurately captures these modes and, in particular, captures the shifted zero eigenvalue that is added to the spectrum when an axisymmetric domain is considered.}

{The convergence history of the first five eigenvalues on the circle is shown for a $\pp_2-P_2$-conforming discrete formulation of \autoref{eq:mixedform}, with predicted rate of convergence of $h^2=$DOFs$^{-1}$. The decay for $\kappa_{h,1}$ is of the order of the tolerance we set in the eigenvalue solver. This comes from the fact that $\kappa_{h,1}$ approximates the zero eigenvalue.}

{
\begin{table}[ht!]
	\centering
	\resizebox{\textwidth}{!}{%
		\begin{tabular}{|c|c|c|c|c|c|}
			\hline
			DOFs  & $\kappa_{1,h}$ & $\kappa_{2,h}$ & $\kappa_{3,h}$ & $\kappa_{4,h}$ & $\kappa_{5,h}$ \\ \hline
						15972 & 12.32475648    & 12.32477023    & 15.6907989     & 28.29273062    & 28.29380655    \\ \hline
			21488 & 12.32412585    & 12.32414851    & 15.68856957    & 28.28731457    & 28.28944094    \\ \hline
			28120 & 12.32366488    & 12.32368938    & 15.68708044    & 28.28443298    & 28.28511814    \\ \hline
			36140 & 12.32329776    & 12.32330826    & 15.68588773    & 28.28155673    & 28.28226112    \\ \hline
			65637 & 12.32312009    & 12.32313548    & 15.68522911    & 28.28039584    & 28.28075982    \\ \hline
		\end{tabular}%
	}
	\caption{First five (shifted) Jones eigenvalues on the unit circle for five levels of refinement, using the P1 conforming scheme (vectorial Lagrange elements) of the primal formulation.}
	\label{tab:primalp1}
\end{table}
\begin{table}[ht!]
	\centering
	\resizebox{\textwidth}{!}{%
		\begin{tabular}{|c|c|c|c|c|c|}
			\hline
			DOFs   & $\kappa_{1,h}$ & $\kappa_{2,h}$ & $\kappa_{3,h}$ & $\kappa_{4,h}$ & $\kappa_{5,h}$ \\ \hline
					63282  & 12.31982312    & 12.3198605     & 15.60885775    & 28.27462732    & 28.27464639    \\ \hline
			85246  & 12.31995092    & 12.32000357    & 15.61697078    & 28.27415568    & 28.27415894    \\ \hline
			111674 & 12.31996175    & 12.3200125     & 15.62039435    & 28.27382317    & 28.27384615    \\ \hline
			143654 & 12.3202651     & 12.32030354    & 15.62961012    & 28.27365126    & 28.27366316    \\ \hline
			261039 & 12.32050245    & 12.32052281    & 15.63657853    & 28.27352834    & 28.27353722    \\ \hline
		\end{tabular}%
	}
	\caption{First five (shifted) Jones eigenvalues on the unit circle for five levels of refinement, using the P2 conforming scheme (vectorial Lagrange elements) of the primal formulation.}
	\label{tab:primalp2}
\end{table}
\begin{table}[ht!]
	\centering
	\resizebox{\textwidth}{!}{%
		\begin{tabular}{|c|c|c|c|c|c|}
			\hline
			DOFs  & $\kappa_{1,h}$ & $\kappa_{2,h}$ & $\kappa_{3,h}$ & $\kappa_{4,h}$ & $\kappa_{5,h}$ \\ \hline
					23958 & 1.000000000    & 6.189626972    & 6.189643368    & 13.15499546    & 13.15505685    \\ \hline
			32232 & 1.000000000    & 6.189248857    & 6.189257759    & 13.15390925    & 13.15400431    \\ \hline
			42180 & 1.000000000    & 6.188970148    & 6.188978719    & 13.15310588    & 13.1531842     \\ \hline
			54210 & 1.000000000    & 6.188735065    & 6.188747593    & 13.15246565    & 13.15249264    \\ \hline
			65637 & 1.000000000    & 6.188634352    & 6.188638195    & 13.15215436    & 13.15220753    \\ \hline
		\end{tabular}%
	}
	\caption{First five (shifted) Jones eigenvalues on the unit circle for ten different levels of refinement, using a P1-P1 scheme of the mixed formulation.}
	\label{tab:mixedp1}
\end{table}
\begin{table}[ht!]
	\centering
	\resizebox{\textwidth}{!}{%
		\begin{tabular}{|c|c|c|c|c|c|}
			\hline
			DOFs   & $\kappa_{1,h}$ & $\kappa_{2,h}$ & $\kappa_{3,h}$ & $\kappa_{4,h}$ & $\kappa_{5,h}$ \\ \hline
			94923  & 1.000000000    & 6.188425852    & 6.188425903    & 13.15133728    & 13.15133729    \\ \hline
			127869 & 1.000000000    & 6.18832008     & 6.188320227    & 13.15110141    & 13.15110143    \\ \hline
			167511 & 1.000000000    & 6.188251842    & 6.188251917    & 13.15094836    & 13.15094836    \\ \hline
			215481 & 1.000000000    & 6.188205638    & 6.188205663    & 13.15084343    & 13.15084343    \\ \hline
			261039 & 1.000000000    & 6.188172645    & 6.188172683    & 13.15076839    & 13.15076839    \\ \hline
		\end{tabular}%
	}
	\caption{First five (shifted) Jones eigenvalues on the unit circle for ten different levels of refinement, using a P2-P2 scheme of the mixed formulation.}
	\label{tab:mixedp2}
\end{table}
}

\graphicspath{{./images/cv-study/}}
\begin{figure}[!ht]\centering
\includegraphics[width = 1.0\textwidth, 
height=0.35\textheight]{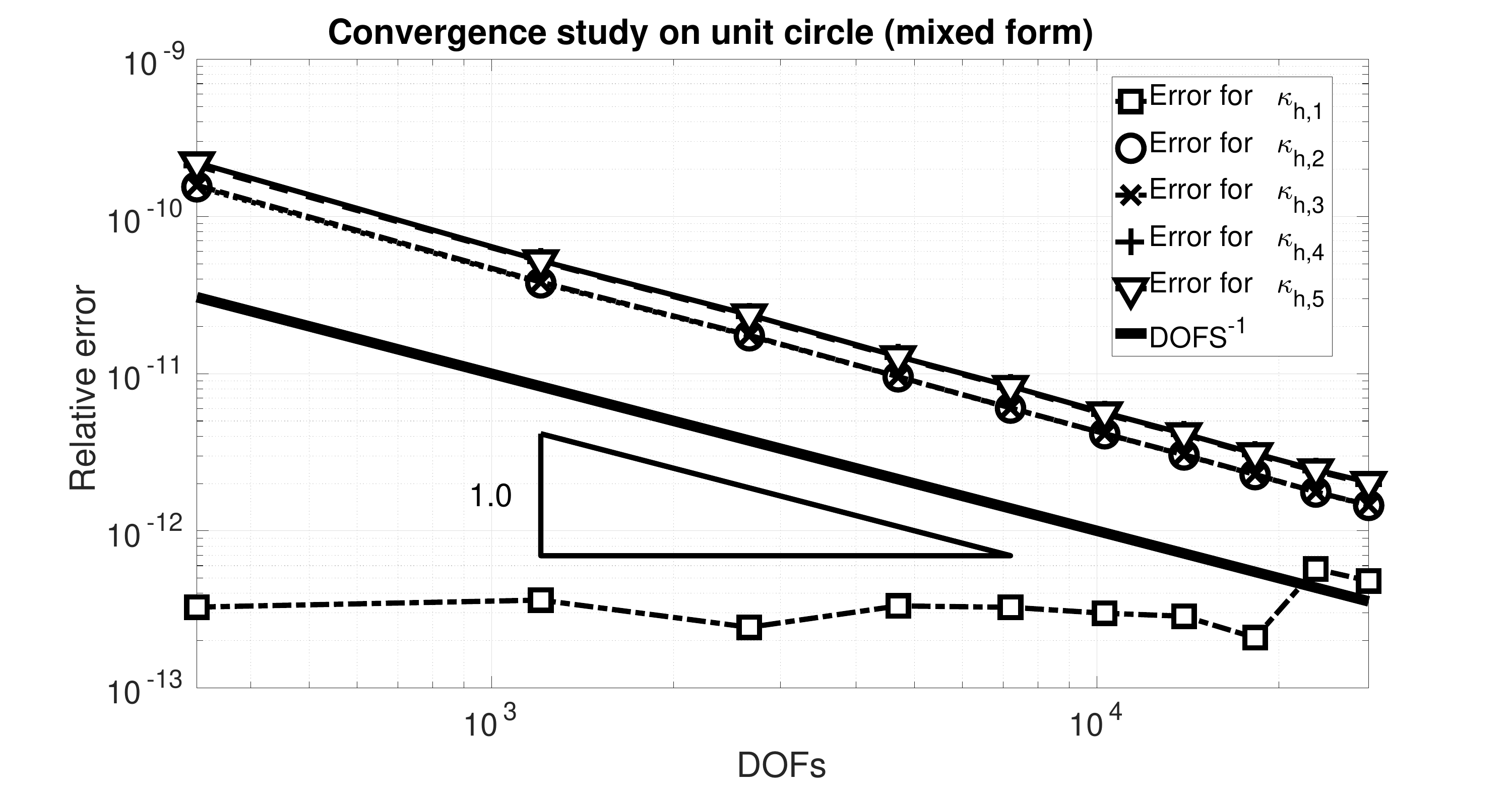}
\caption{Convergence study for the first 5 eigenvalues of 
\autoref{eq:jones} on the unit circle using the mixed formulation with quadratic elements. The approximated eigenvalue $\kappa_{h,1}$ corresponds to the eigenvalue 1.0 in the shifted formulation of \autoref{eq:mixedform}.}\label{fig:cvhistorycircle}
\end{figure}

\subsection{Shear and compression modes of Jones eigenpairs}
We end this section by using {the discrete formulation in \autoref{eq:discrete1}} to compute Jones modes on some 
geometries, to explore the 
dependence of the spectrum on domain shape and the Lam\'e parameters. We also report the $L^2$-norm of the divergence 
and rotational of the computed fields. In simple shapes{, as the rectangle,} the Jones modes can be readily 
identified as pure $s$- or 
$p$- modes. The eigenvalues can also have multiplicity, and the eigenspaces may include eigenmodes of both types.  
This points to the need for care with resolving eigenmodes, as with any problem involving multiple eigenvalues or clusters of these.

As found in \autoref{sec:formulation}, the Jones eigenvalue problem on Lipschitz domains possesses a countable set of 
eigenvalues $w_{m\ell}^2$, $m,\ell = 0,1,2,\ldots$. For the sake of presentation, we set $\nu_j = 
w^2_{m\ell}$, with $j = j(m,\ell)\in\nnn_0$, such that $\nu_j \leq \nu_{j+1}$, for all $j\in\nnn_0$.

{In \autoref{subsec:harge}, we showed that eigenpairs given by \autoref{eq:smode} and 
\autoref{eq:pmode} are part of the spectrum of the Jones eigenproblem on a rectangle. By changing the Lam\'e parameters, one expects to change not only the eigenvalues, but potentially also the geometric multiplicity of eigenspaces. An example of this behaviour can be seen in \autoref{table:lambda1square} and \autoref{table:lambda2square} where we show the first seven approximated Jones eigenpairs on the unit square with parameters $\mu = \rho = 1$, and $\lambda\in\{1,2\}$. }

\graphicspath{{./images/EigenvectorPlots/}}
\small{\begin{table}[!ht]
\centering
\begin{tabular}{|c|c|c|c|c|c|c|}
\hline
 $j$& $\nu_j$ & $\nu_j/\pi^2$ & $\| \div\, \u\|_0^2$ & $\|\curl\,\u\|_0^2$ & $x-$component & $y-$component 
\\\hline
 1& 19.74 & 2.000 & 5.048e-08 & 19.72 & \raisebox{-.5\height}{\includegraphics[width = .15\textwidth, 
height=0.04\textheight]{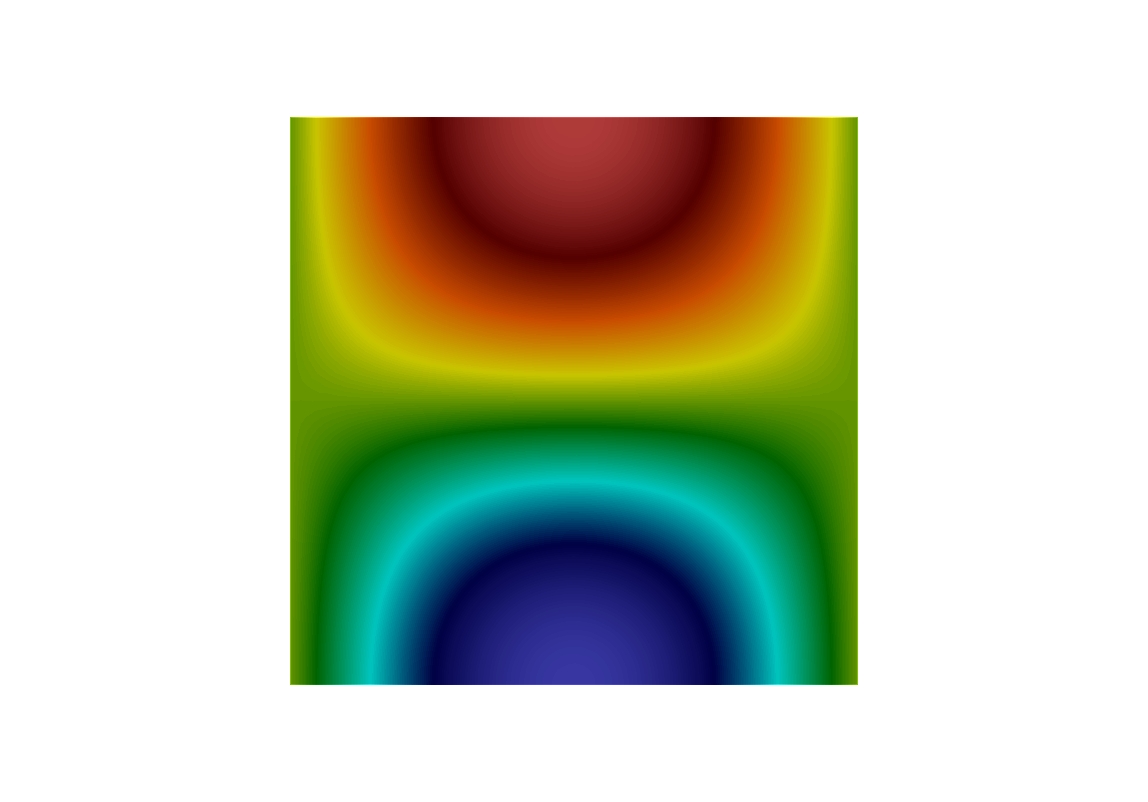}} & 
\raisebox{-.5\height}{\includegraphics[width = .15\textwidth, 
height=0.04\textheight]{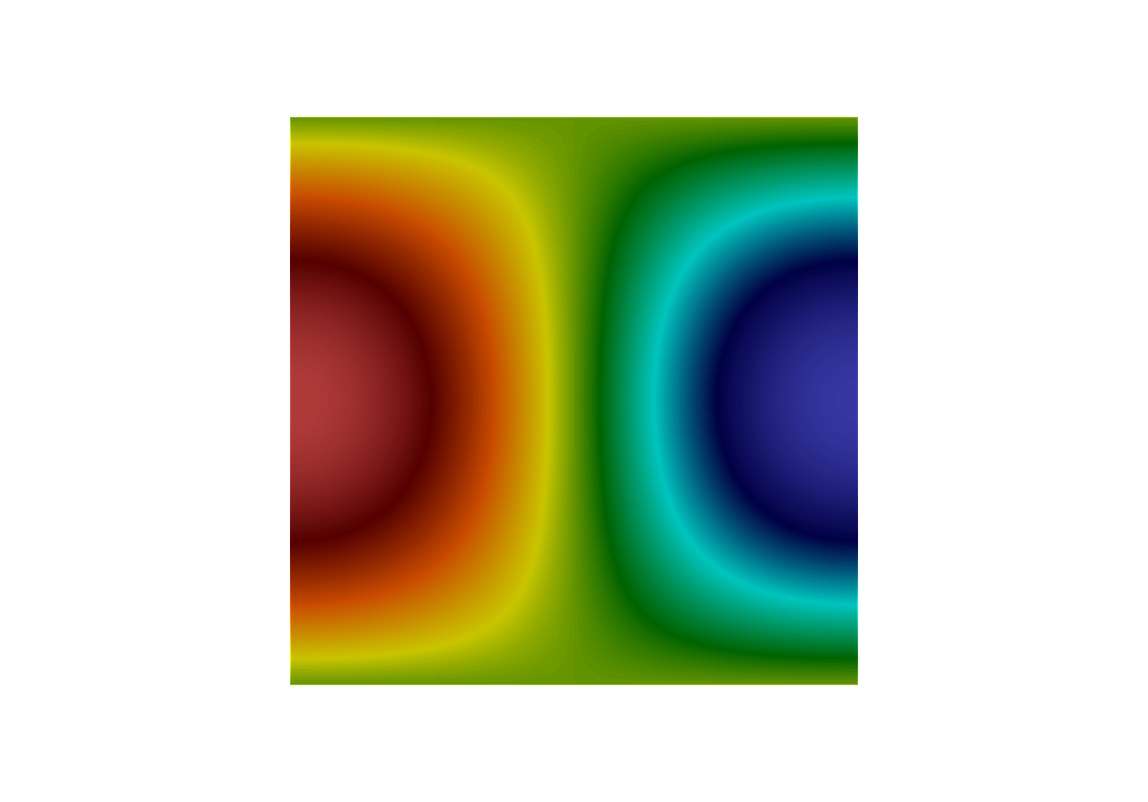}}\\
2 & 29.61 & 3.000 & 9.870 & 0.000188 & \raisebox{-.5\height}{\includegraphics[width = .15\textwidth, 
height=0.04\textheight]{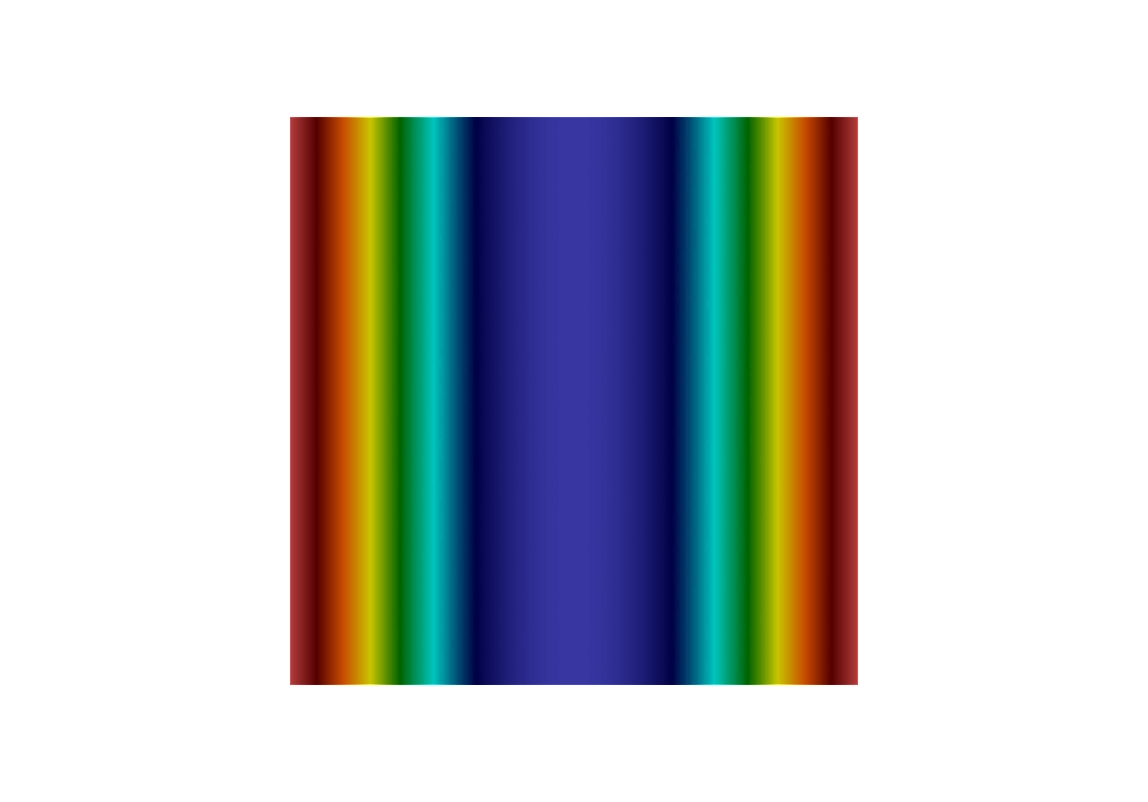}} & 
\raisebox{-.5\height}{\includegraphics[width = .15\textwidth, 
height=0.04\textheight]{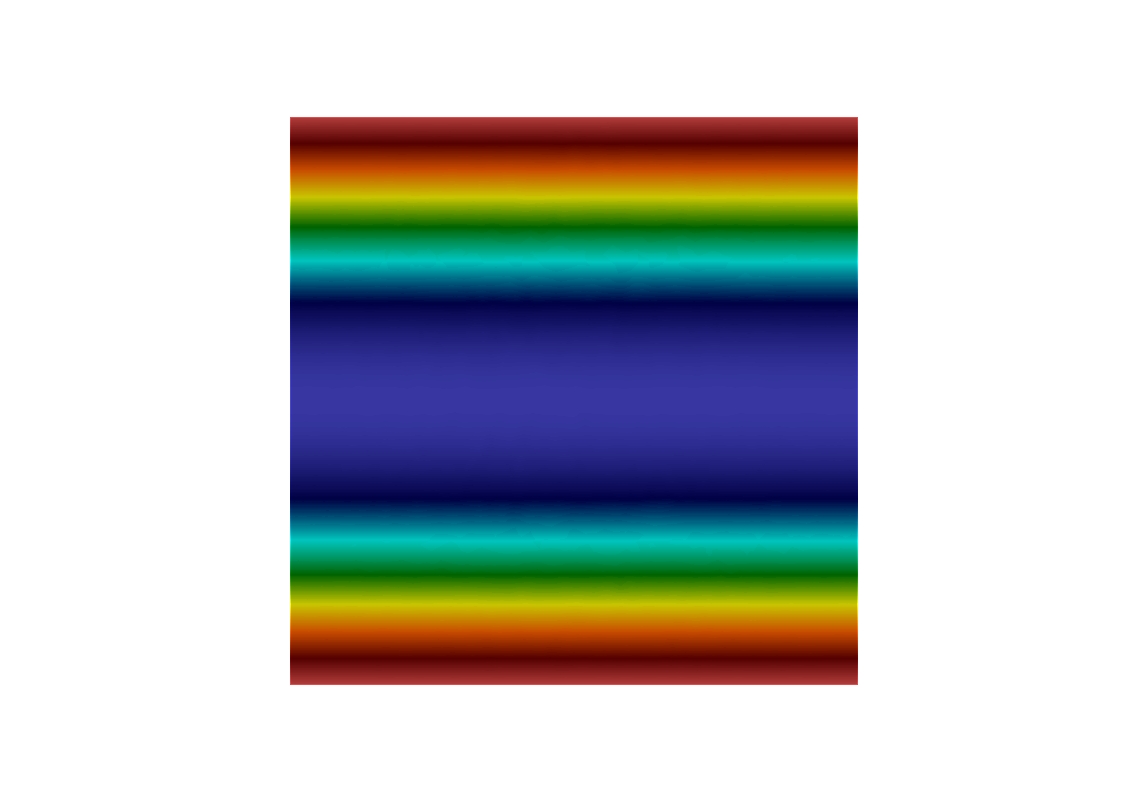}}\\
3 & 29.61 & 3.000 & 9.870 & 0.0001293 & \raisebox{-.5\height}{\includegraphics[width = .15\textwidth, 
height=0.04\textheight]{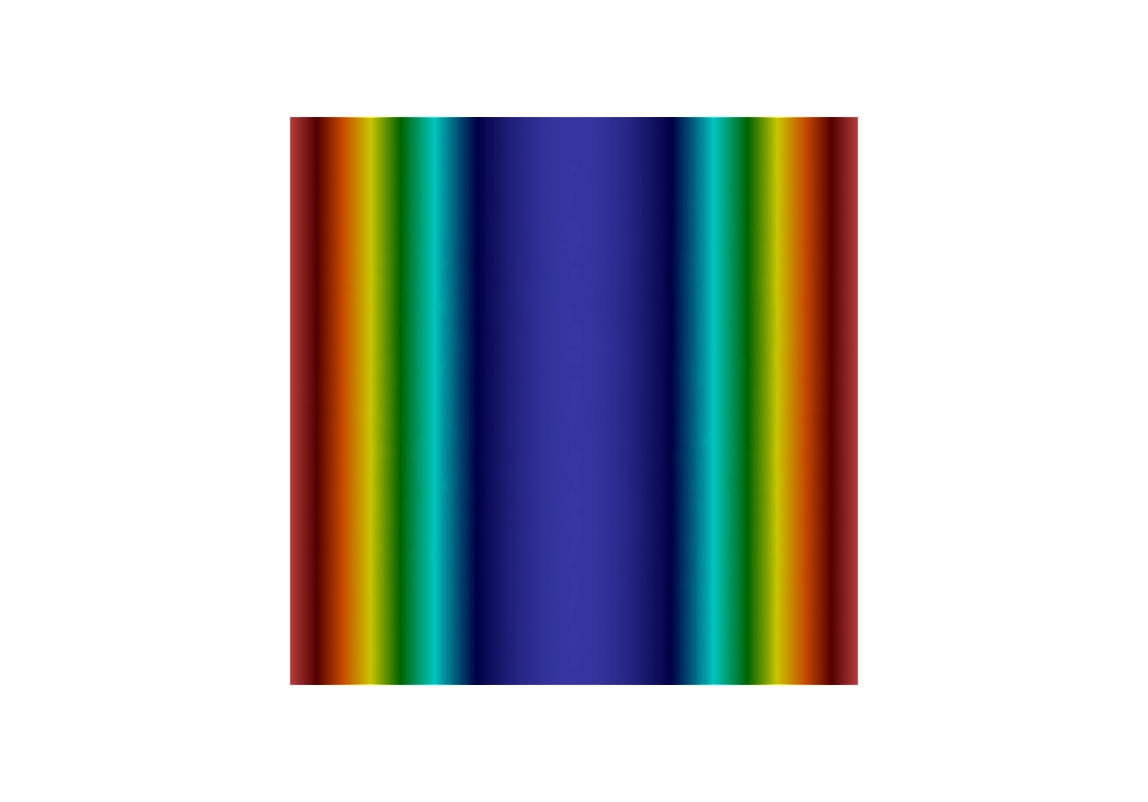}} & 
\raisebox{-.5\height}{\includegraphics[width = .15\textwidth, 
height=0.04\textheight]{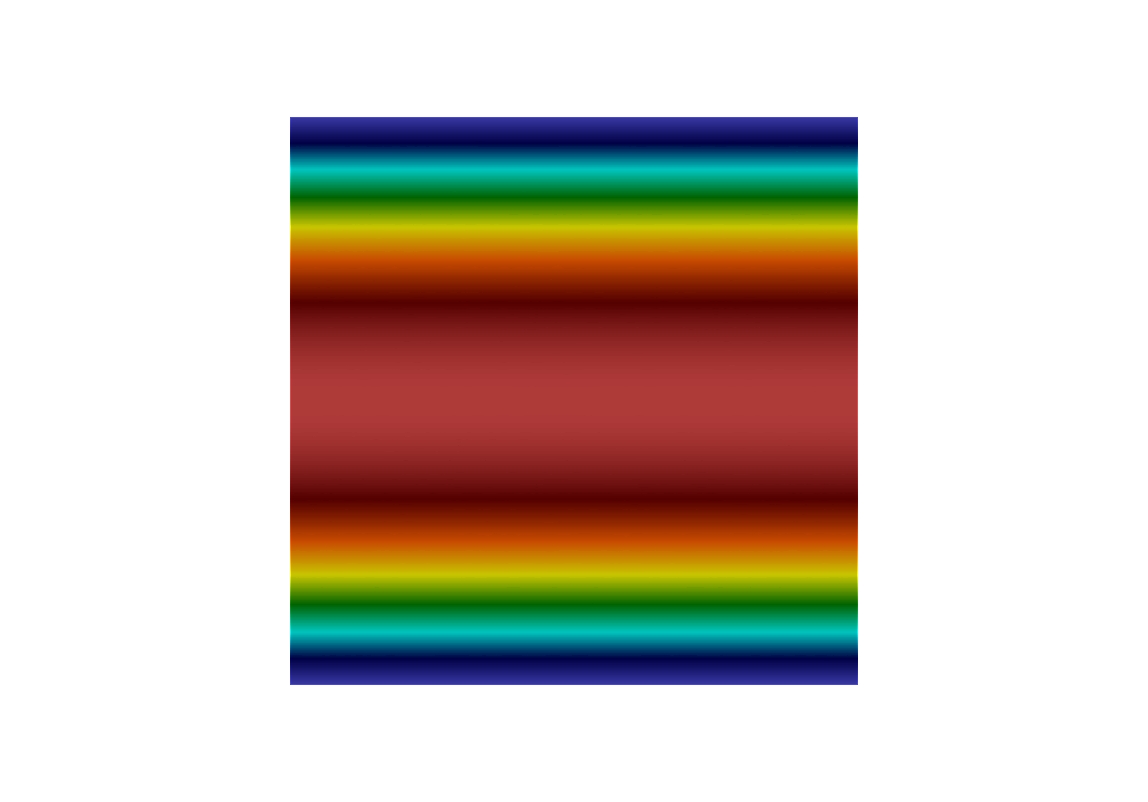}}\\
4 & 49.35 & 5.000 & 7.168e-07 & 49.22 & \raisebox{-.5\height}{\includegraphics[width = .15\textwidth, 
height=0.04\textheight]{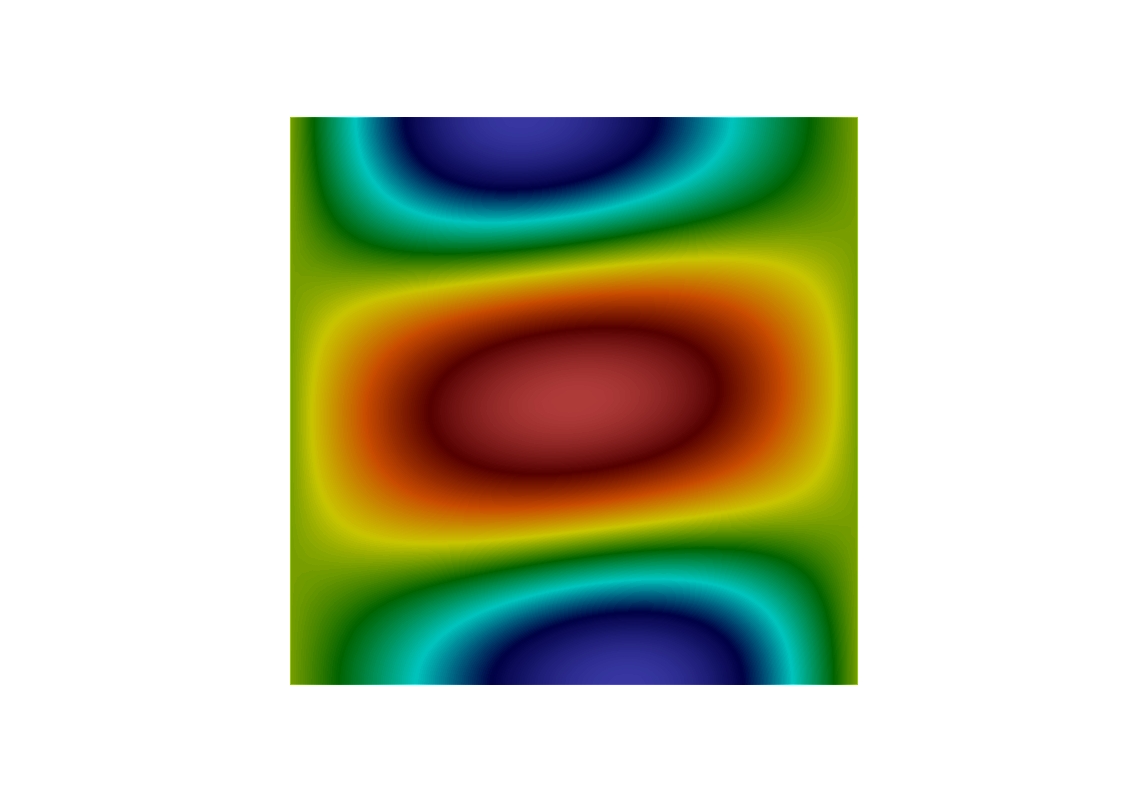}} & 
\raisebox{-.5\height}{\includegraphics[width = .15\textwidth, 
height=0.04\textheight]{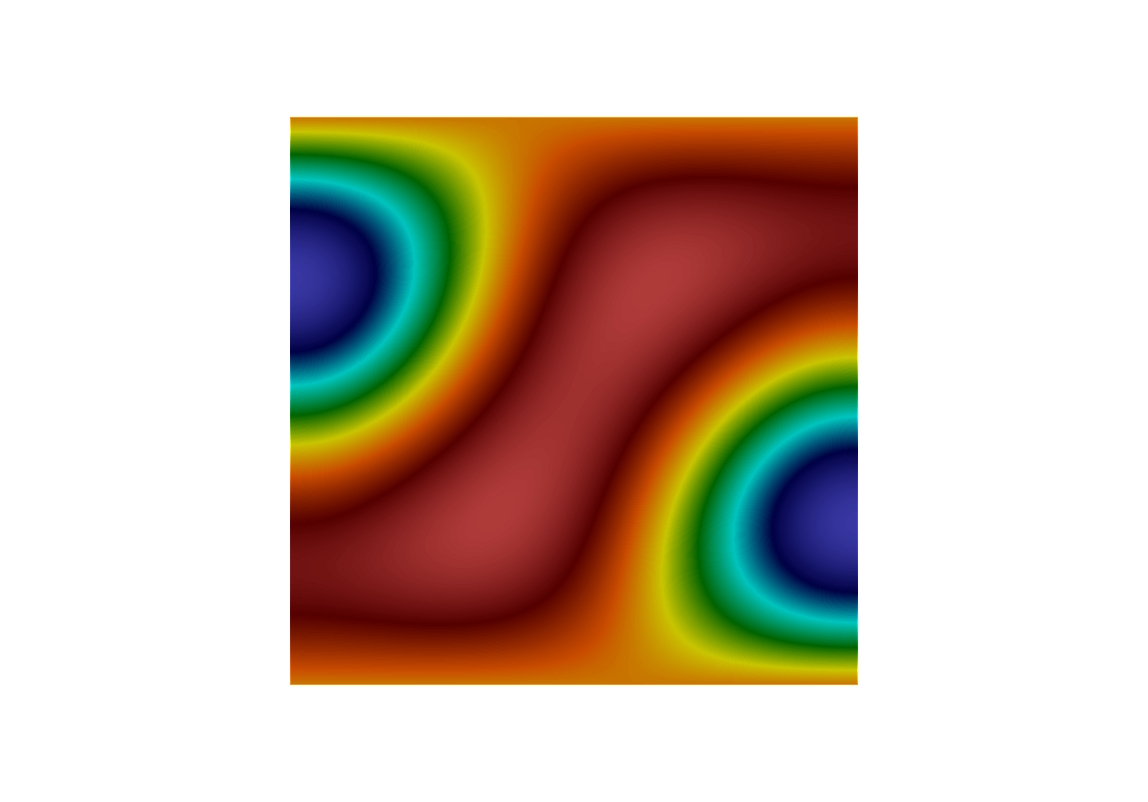}}\\
5 & 49.35 & 5.000 & 8.863e-07 & 49.25 & \raisebox{-.5\height}{\includegraphics[width = .15\textwidth, 
height=0.04\textheight]{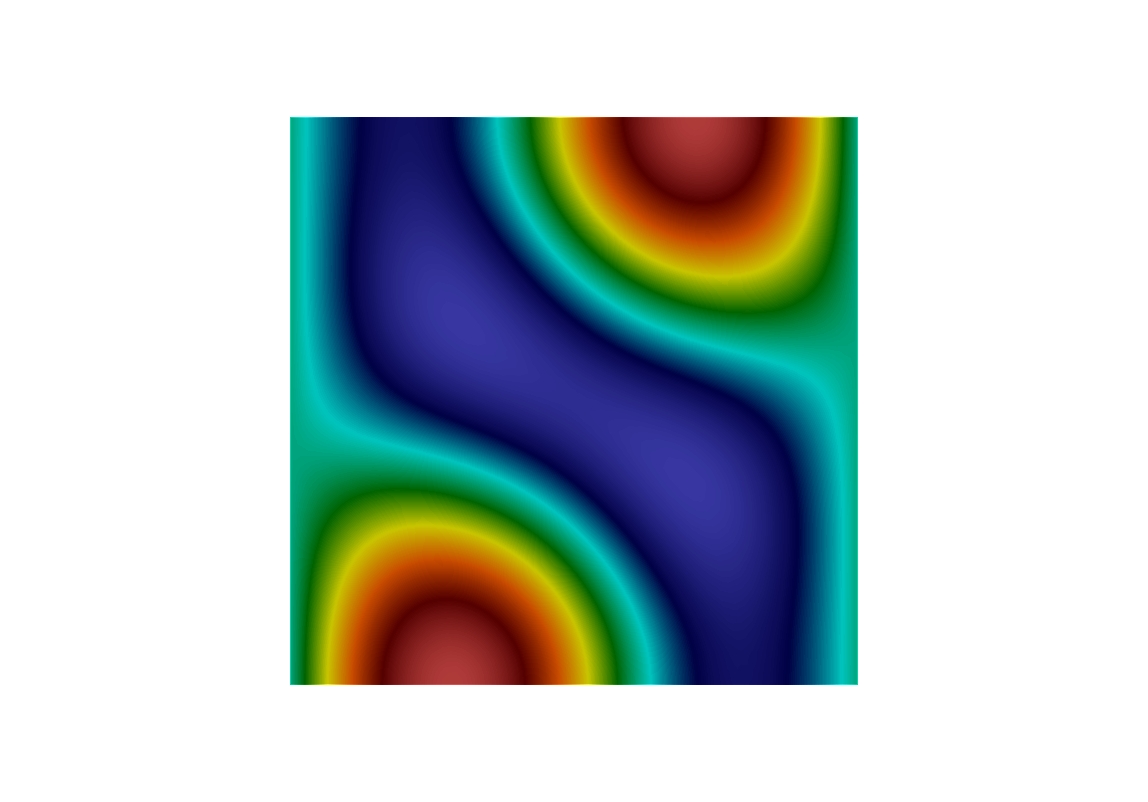}} & 
\raisebox{-.5\height}{\includegraphics[width = .15\textwidth, 
height=0.04\textheight]{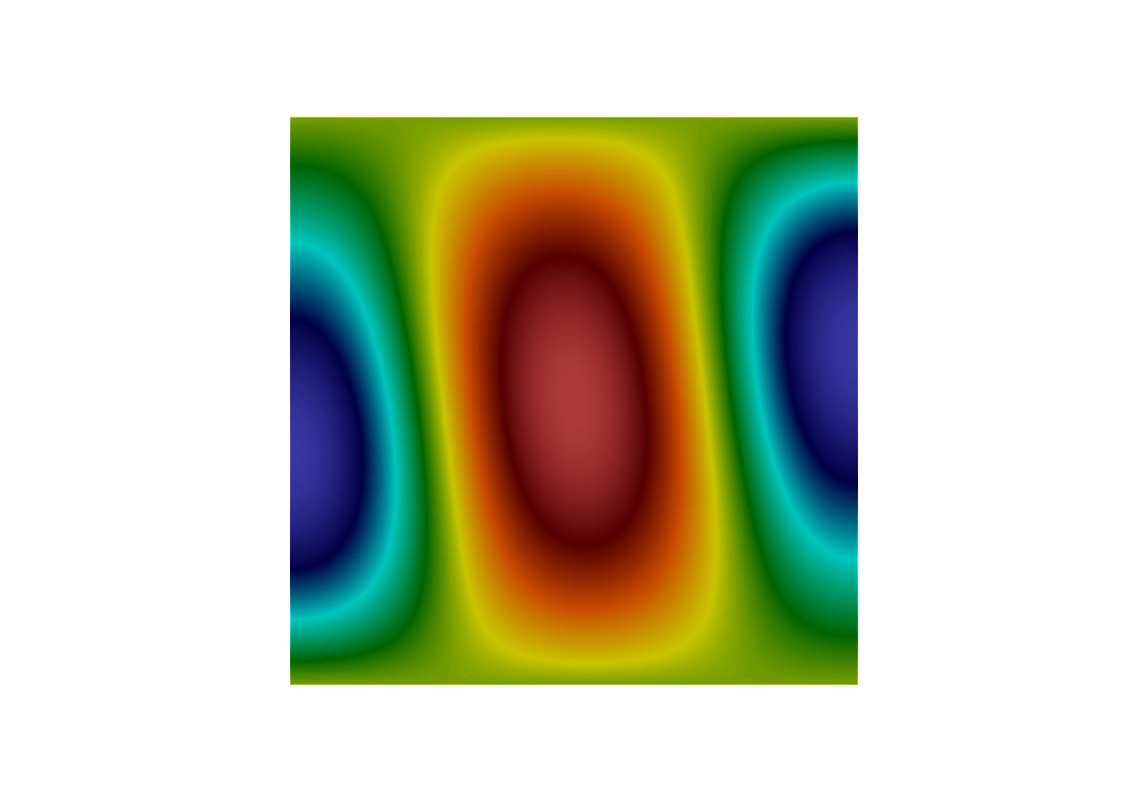}}\\
6 & 59.22 & 6.000 & 19.74 & 0.0005061 & \raisebox{-.5\height}{\includegraphics[width = .15\textwidth, 
height=0.04\textheight]{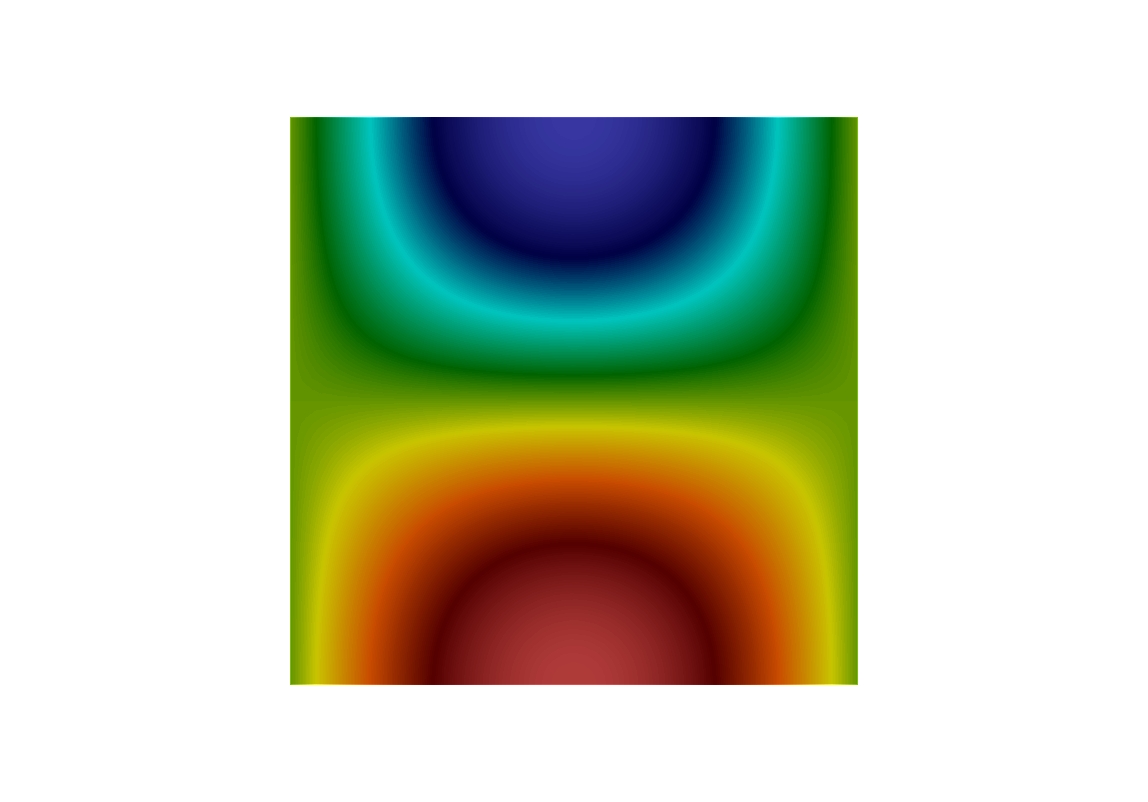}} & 
\raisebox{-.5\height}{\includegraphics[width = .15\textwidth, 
height=0.04\textheight]{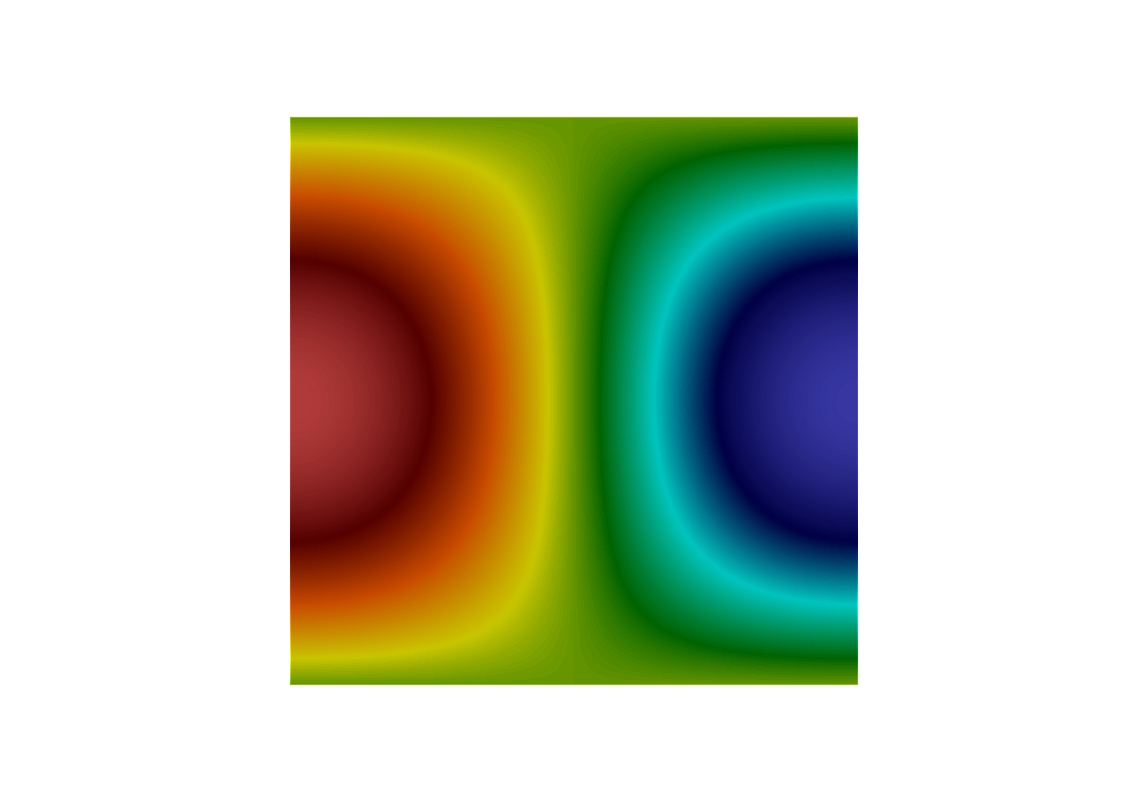}}\\
7 & 78.96 & 8.000 & 2.979e-06 & 78.78 & \raisebox{-.5\height}{\includegraphics[width = .15\textwidth, 
height=0.04\textheight]{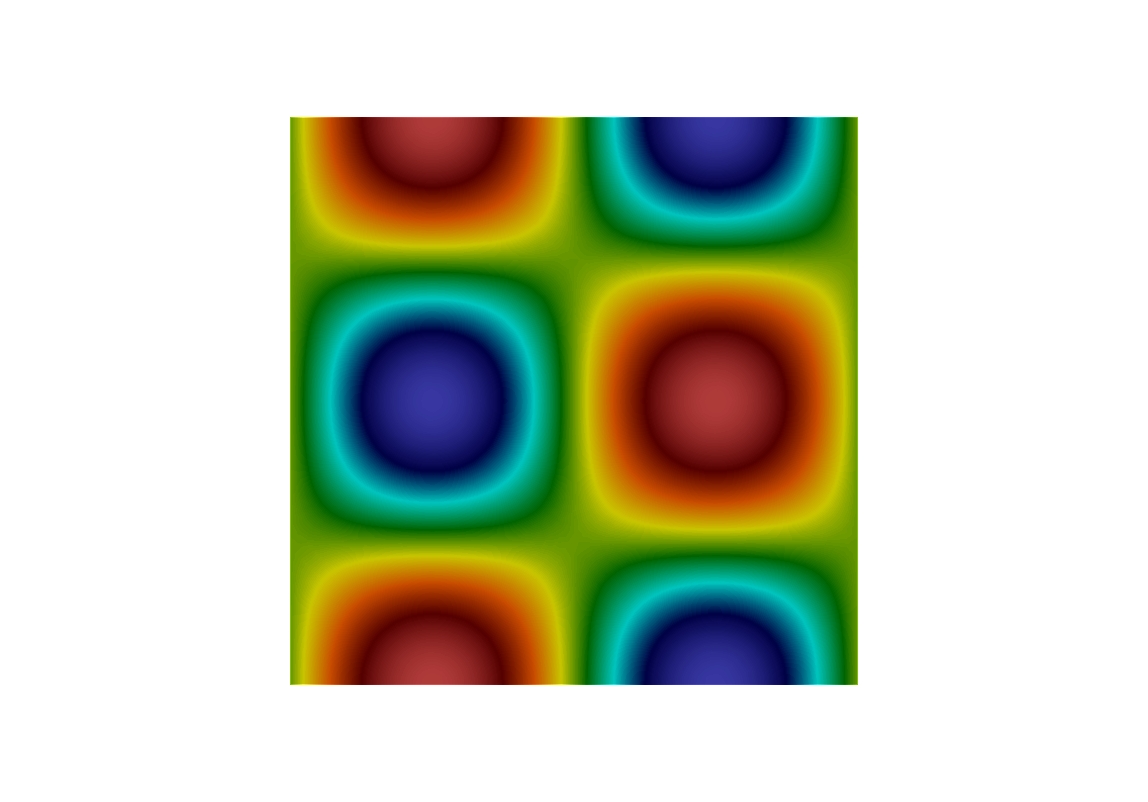}} & 
\raisebox{-.5\height}{\includegraphics[width = .15\textwidth, 
height=0.04\textheight]{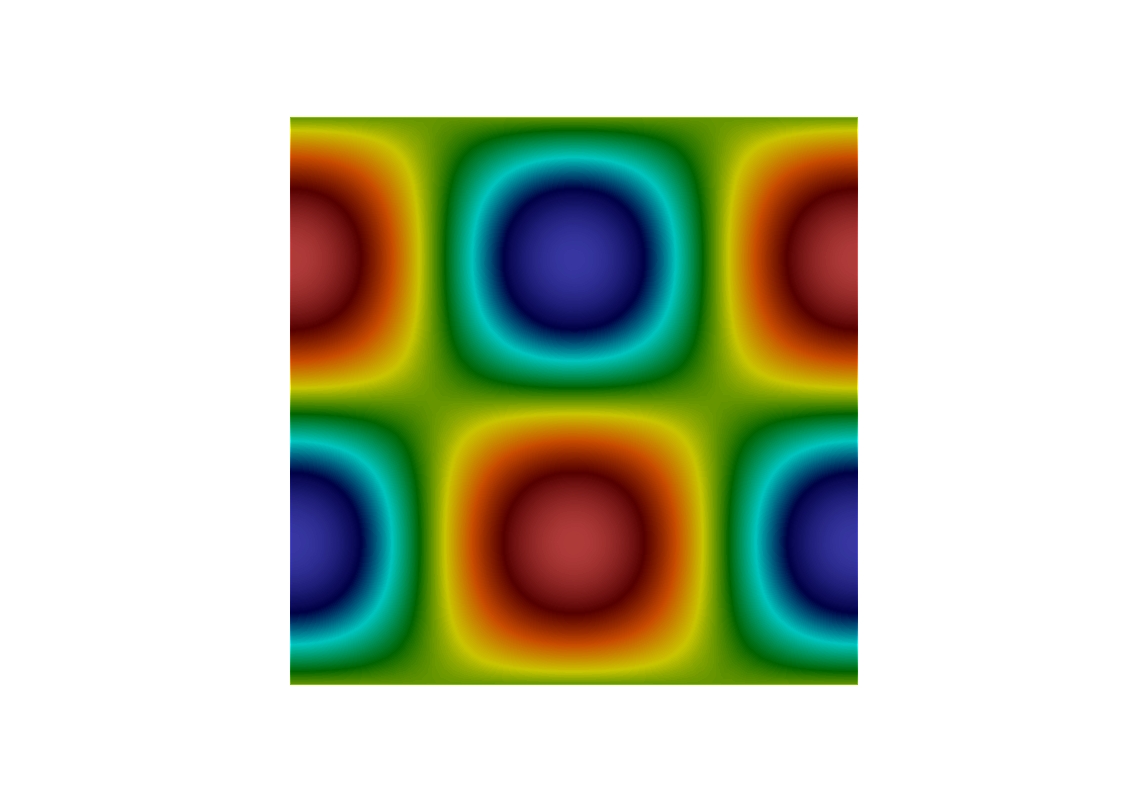}}\\\hline
\end{tabular}\label{table:lambda1square}\caption{First 7 eigemodes on unit square with parameters $\mu=\lambda=\rho= 1$. $\nu_2=\nu_3$, and the corresponding eigenspace has pure $p$ modes. Also, $\nu_4=\nu_5$, and the corresponding eigenspace has pure $s$ modes.  }
\end{table}}

\graphicspath{{./images/EigenvectorPlots/}}
\small{\begin{table}[!ht]
\centering
\begin{tabular}{|c|c|c|c|c|c|c|}
\hline
 $j$& $\nu_j$ & $\nu_j/\pi^2$ & $\| \div\, \u\|_0^2$ & $\|\curl\,\u\|_0^2$ & $x-$component & $y-$component 
\\\hline
 1& 19.74 & 2.000 & 5.048e-08 & 19.72 & \raisebox{-.5\height}{\includegraphics[width = .15\textwidth, 
height=0.04\textheight]{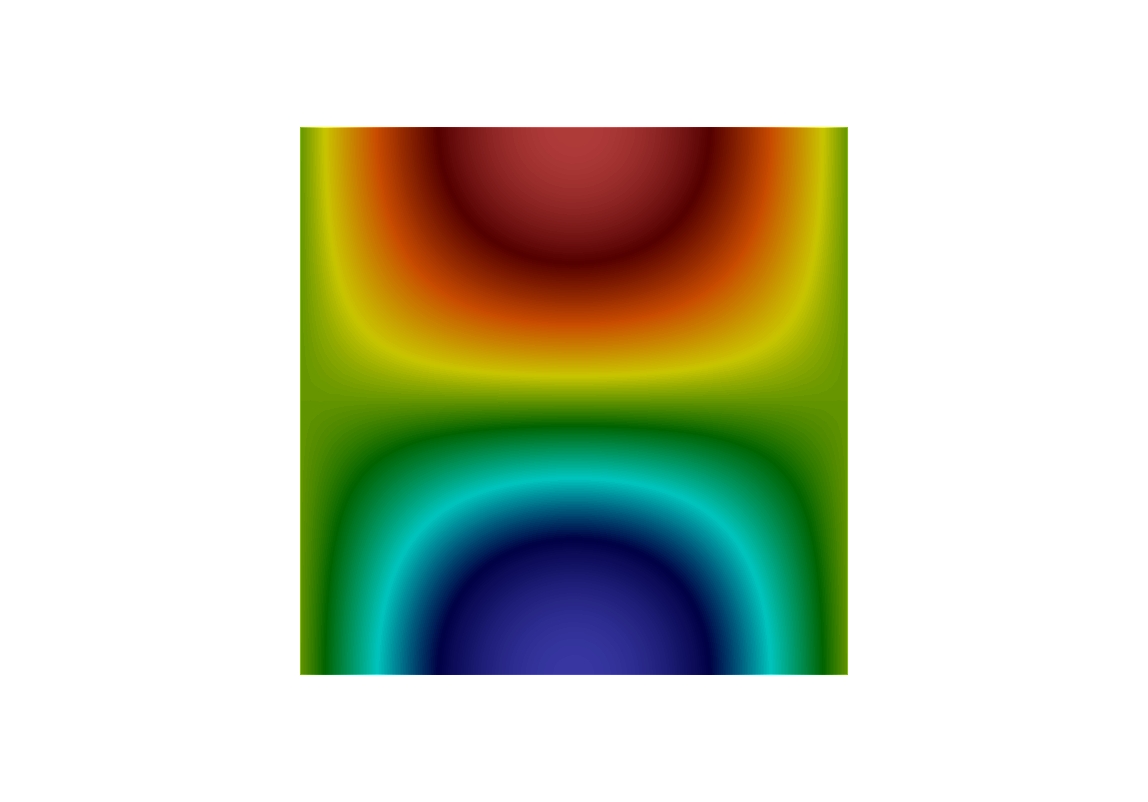}} & 
\raisebox{-.5\height}{\includegraphics[width = .15\textwidth, 
height=0.04\textheight]{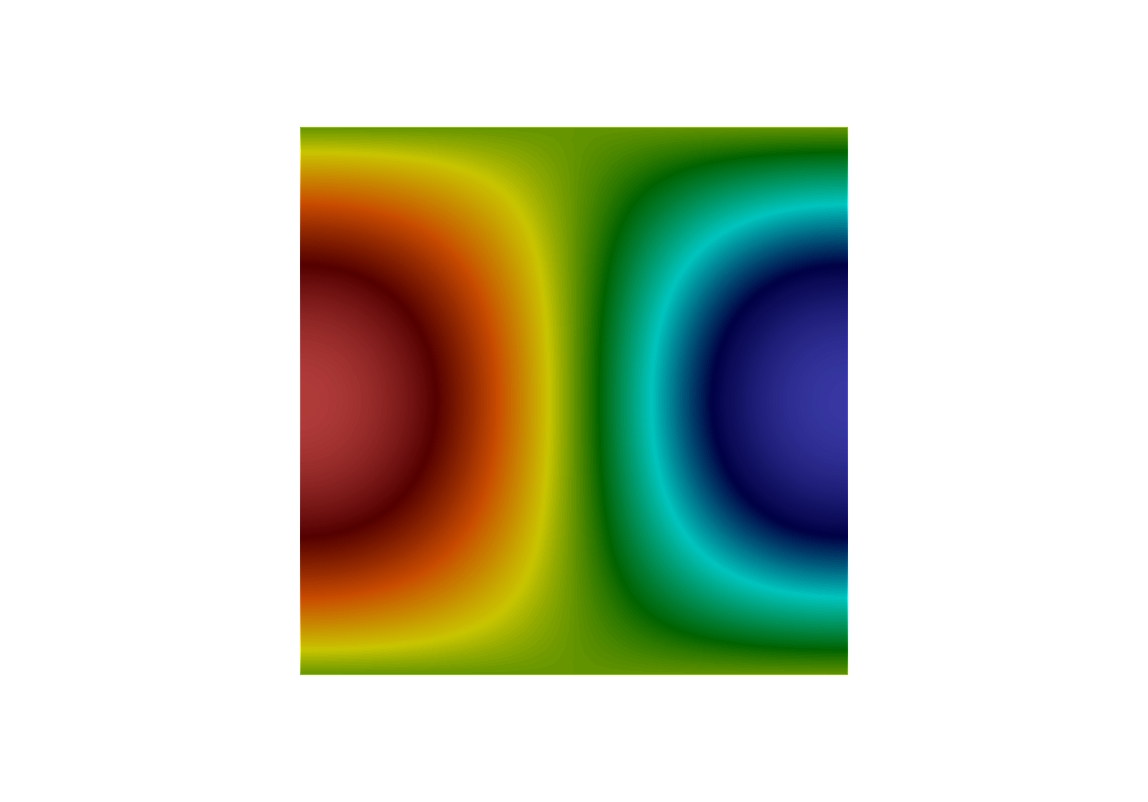}}\\
2 & 39.48 & 4.000 & 9.870 & 0.000188 & \raisebox{-.5\height}{\includegraphics[width = .15\textwidth, 
height=0.04\textheight]{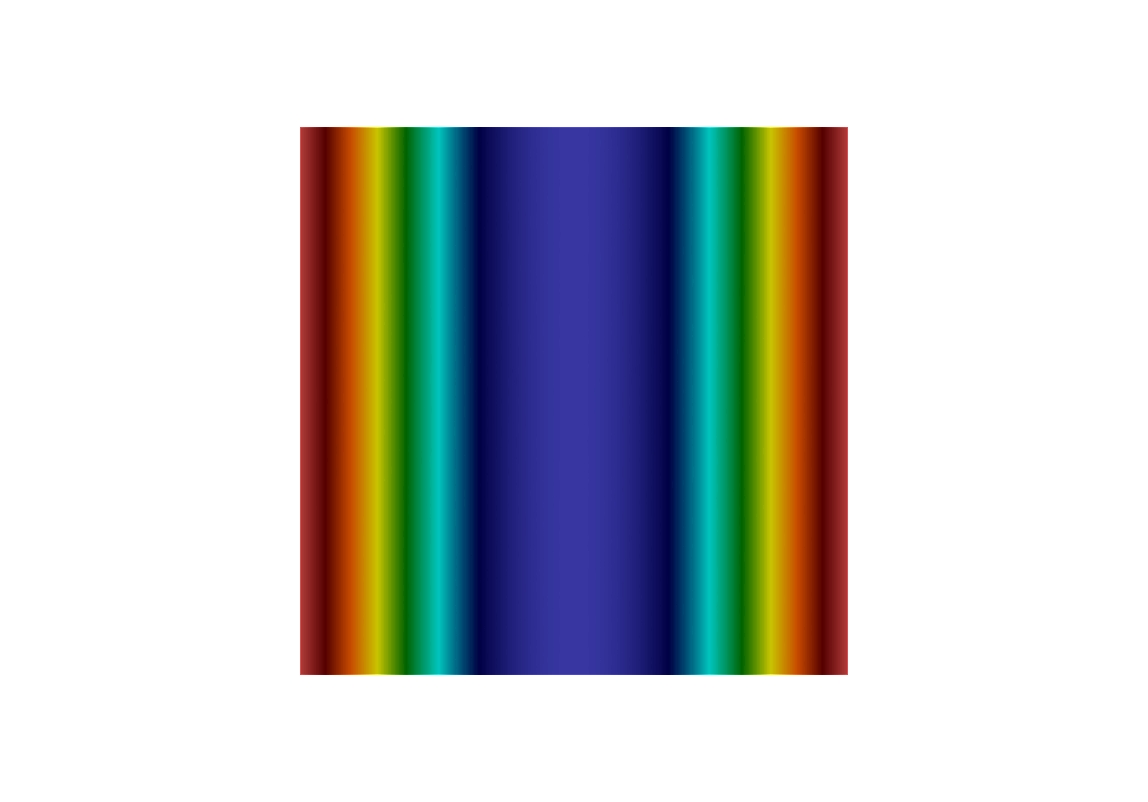}} & 
\raisebox{-.5\height}{\includegraphics[width = .15\textwidth, 
height=0.04\textheight]{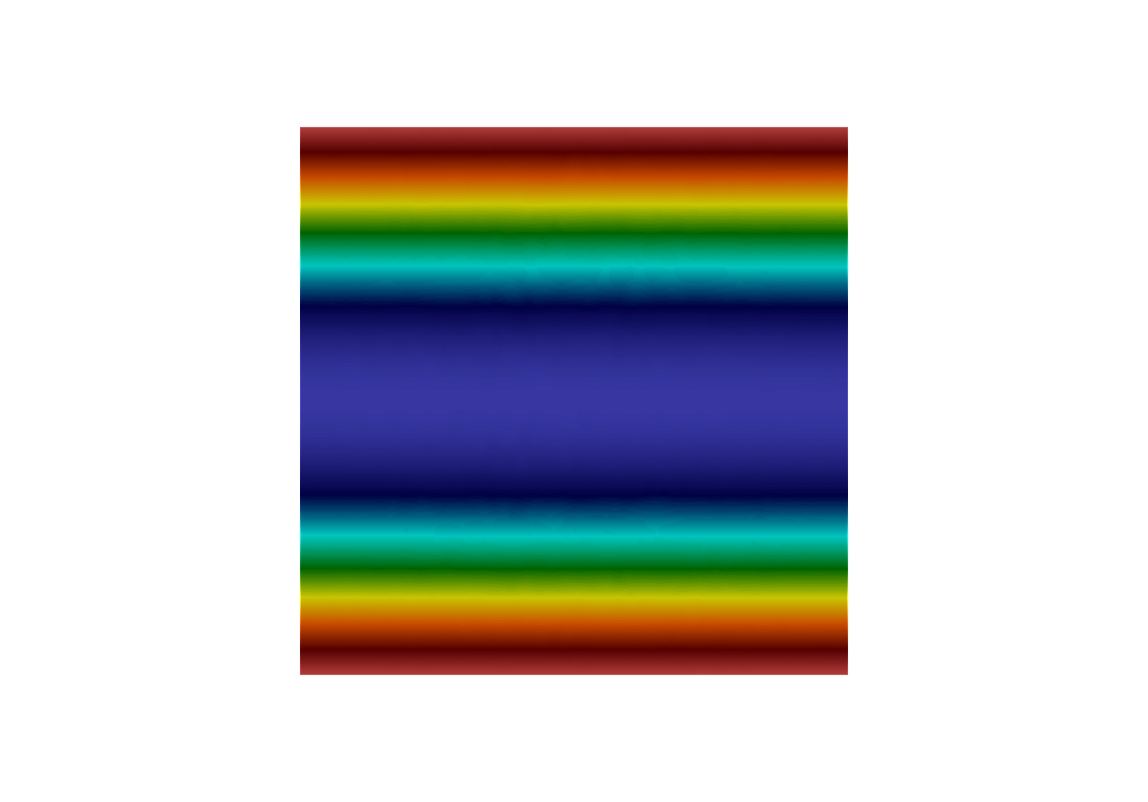}}\\
3 & 39.48 & 4.000 & 9.870 & 0.0001294 & \raisebox{-.5\height}{\includegraphics[width = .15\textwidth, 
height=0.04\textheight]{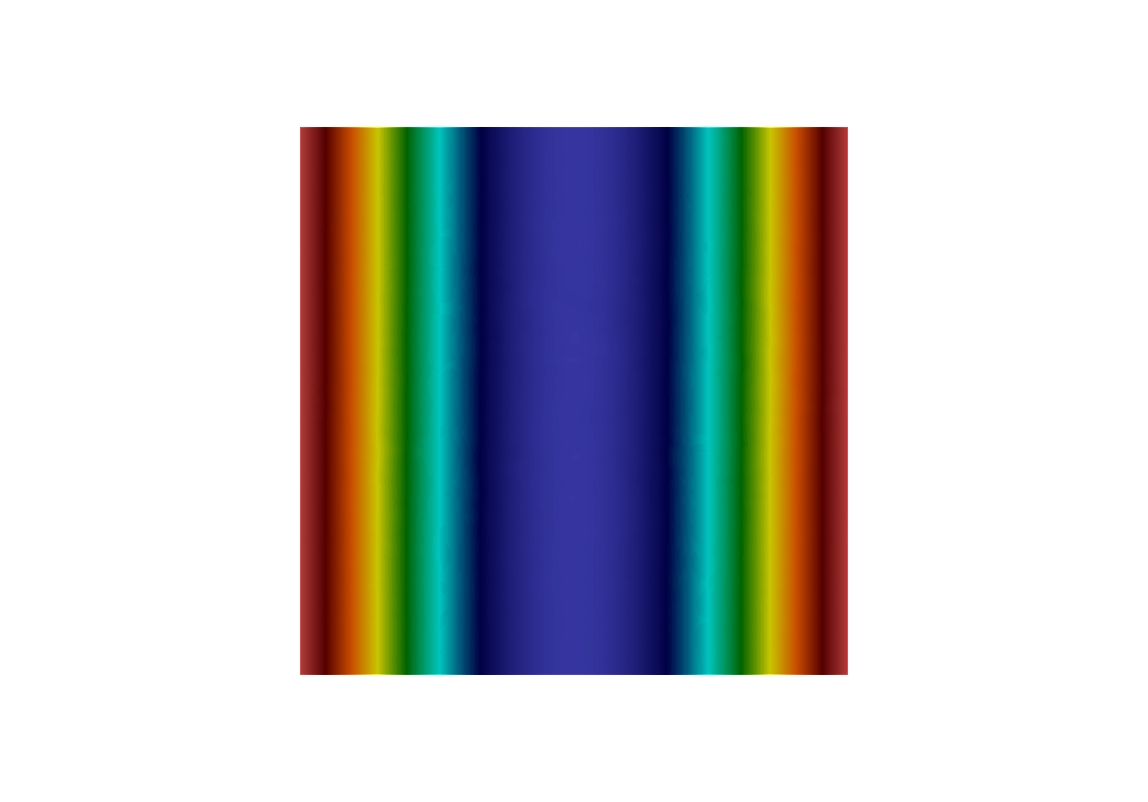}} & 
\raisebox{-.5\height}{\includegraphics[width = .15\textwidth, 
height=0.04\textheight]{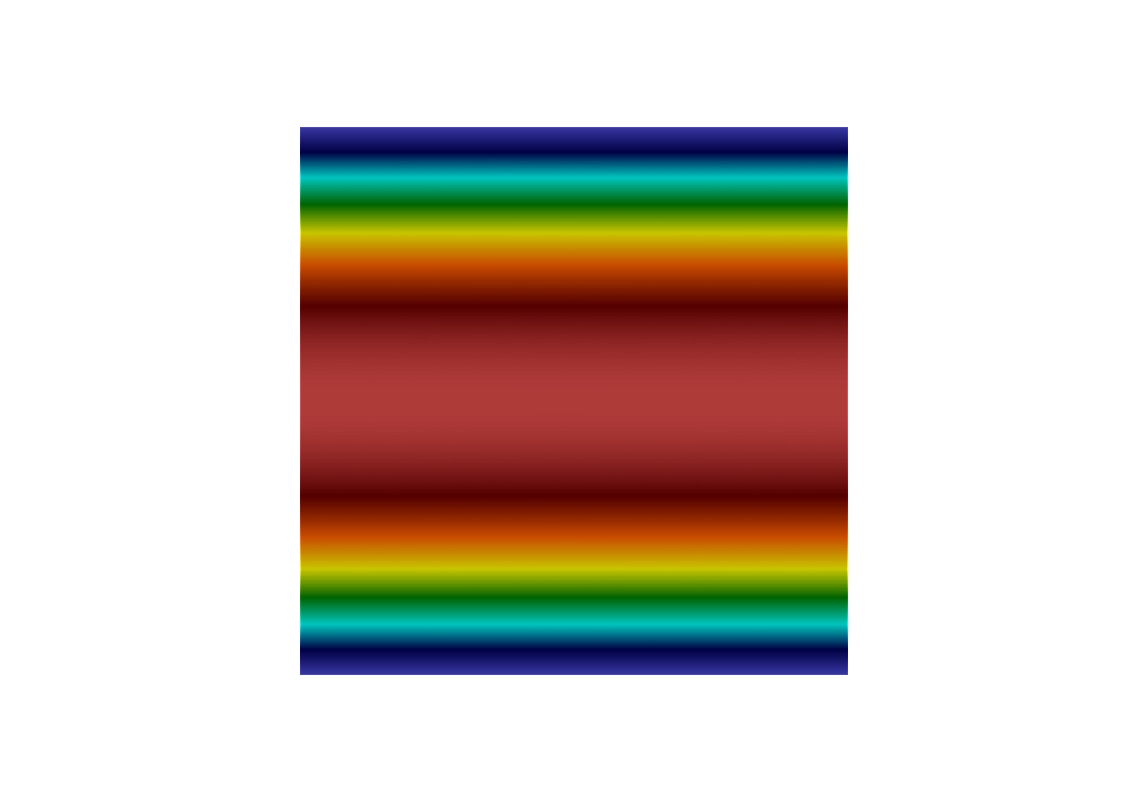}}\\
4 & 49.35 & 5.000 & 6.414e-07 & 49.22 & \raisebox{-.5\height}{\includegraphics[width = .15\textwidth, 
height=0.04\textheight]{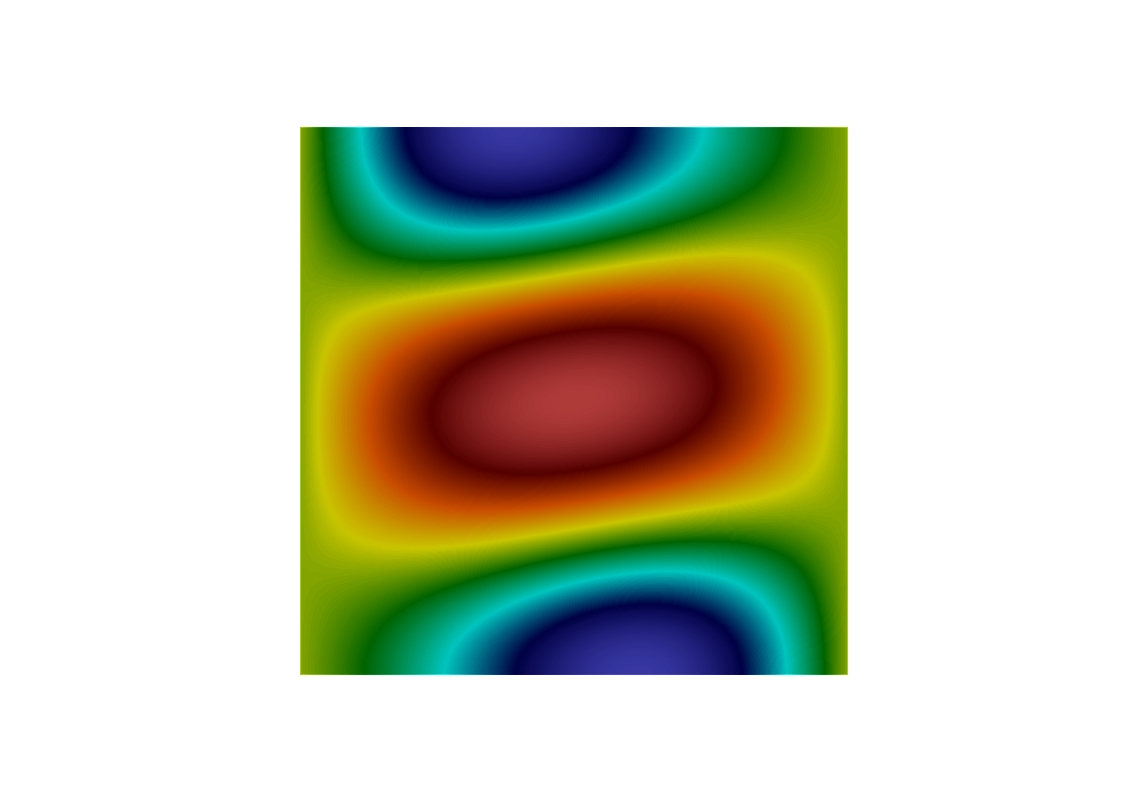}} & 
\raisebox{-.5\height}{\includegraphics[width = .15\textwidth, 
height=0.04\textheight]{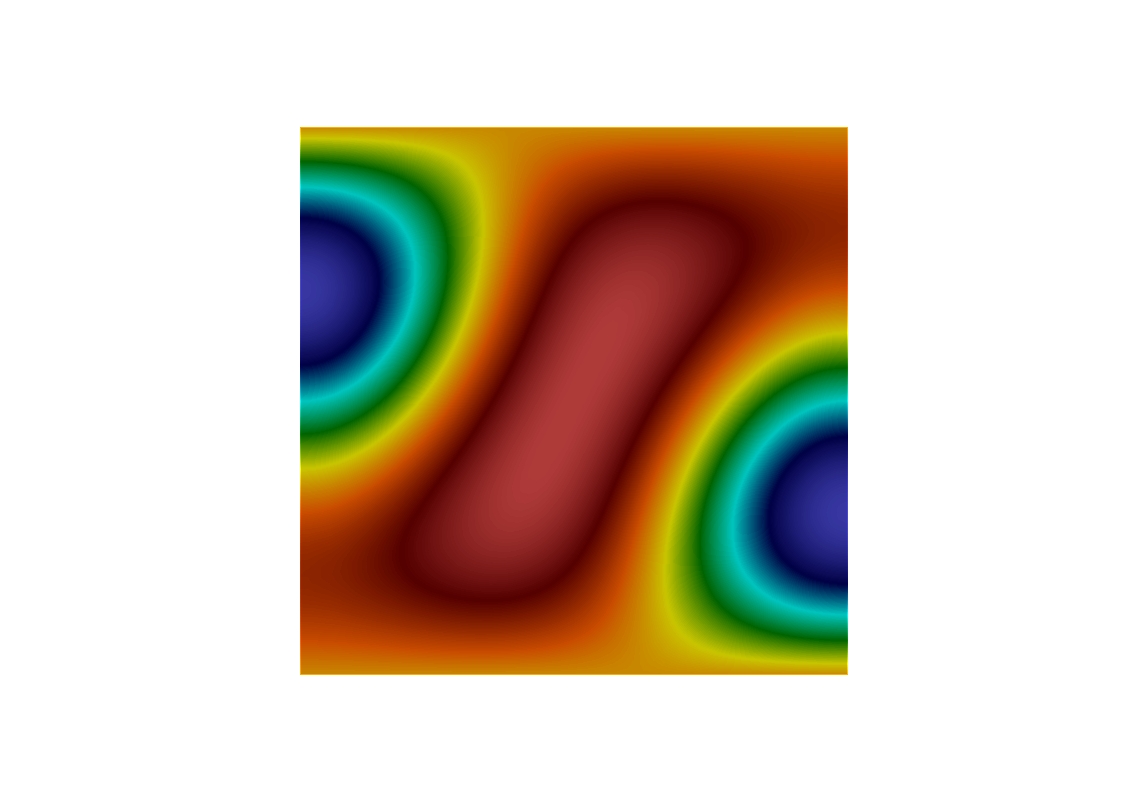}}\\
5 & 49.35 & 5.000 & 8.263e-07 & 49.25 & \raisebox{-.5\height}{\includegraphics[width = .15\textwidth, 
height=0.04\textheight]{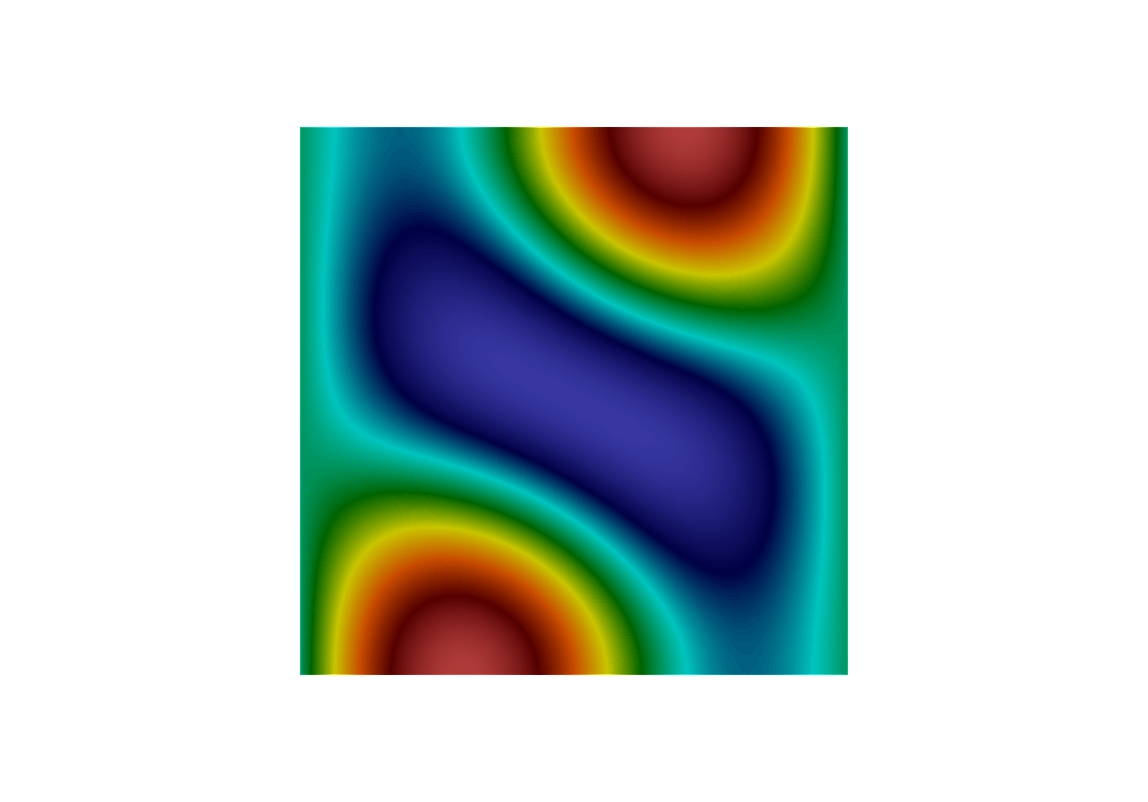}} & 
\raisebox{-.5\height}{\includegraphics[width = .15\textwidth, 
height=0.04\textheight]{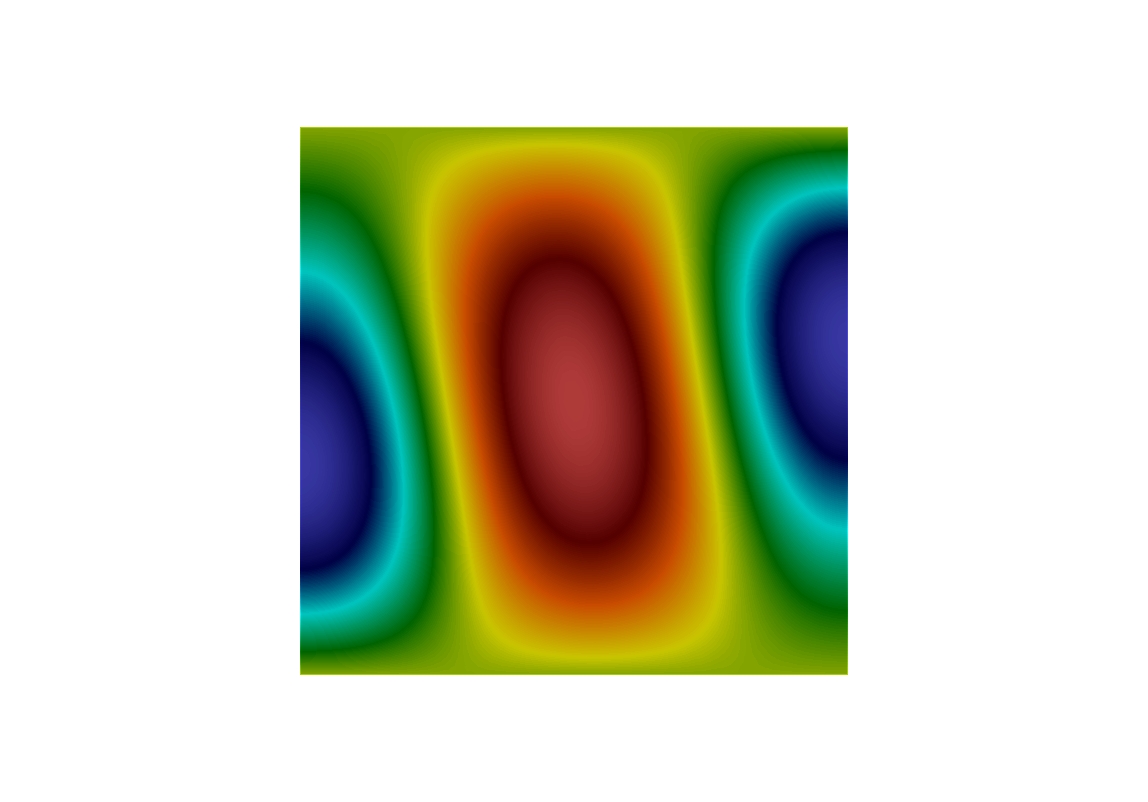}}\\
6 & 78.96 & 8.000 & 19.74 & 0.0005061 & \raisebox{-.5\height}{\includegraphics[width = .15\textwidth, 
height=0.04\textheight]{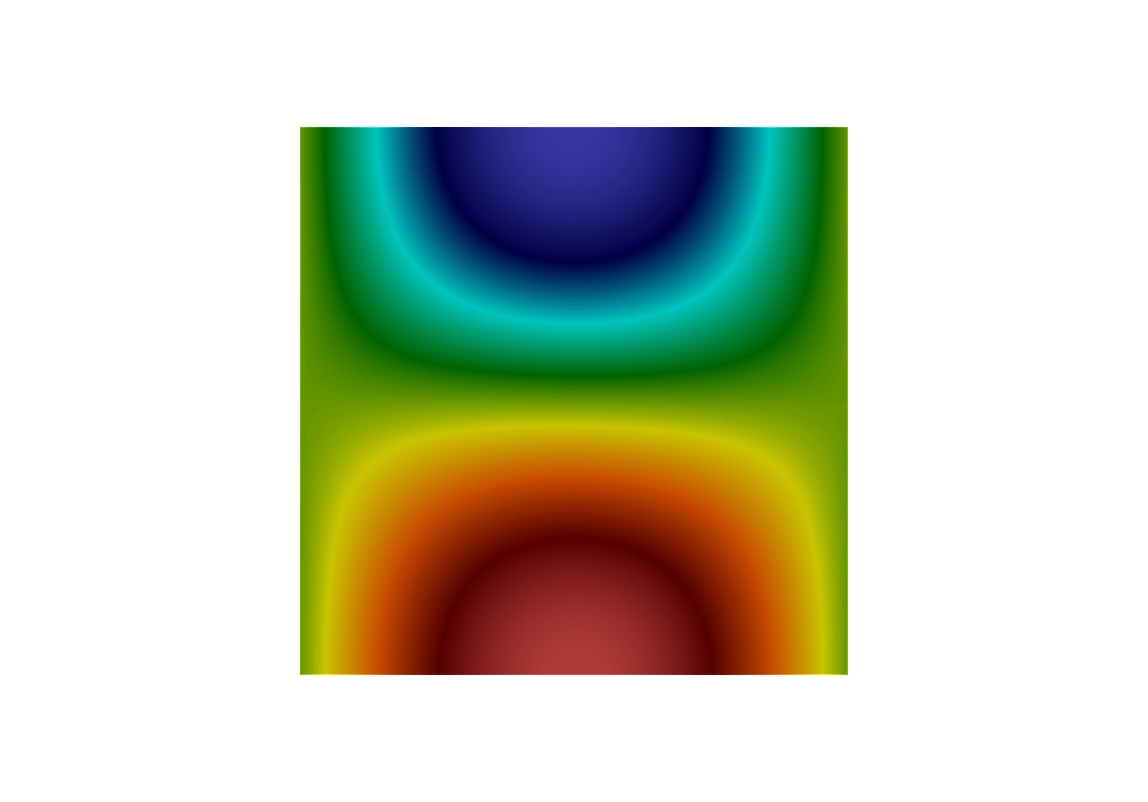}} & 
\raisebox{-.5\height}{\includegraphics[width = .15\textwidth, 
height=0.04\textheight]{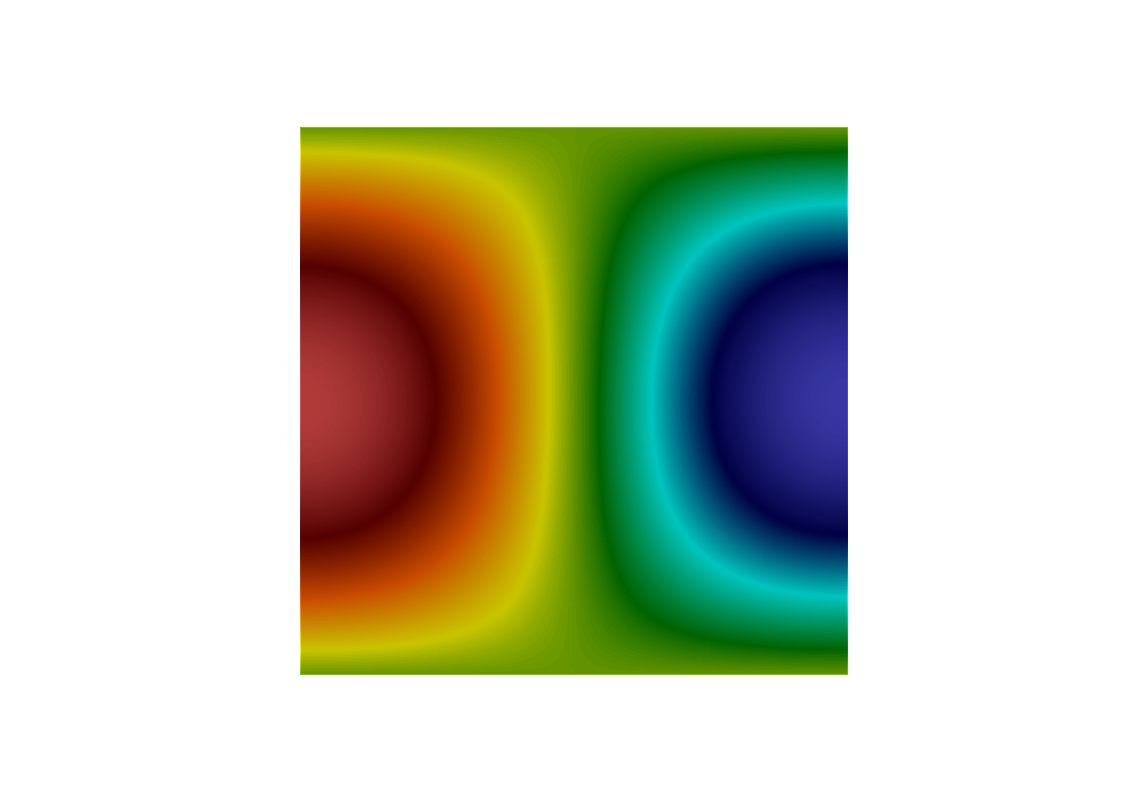}}\\
7 & 78.96 & 8.000 & 2.742e-06 & 78.78 & \raisebox{-.5\height}{\includegraphics[width = .15\textwidth, 
height=0.04\textheight]{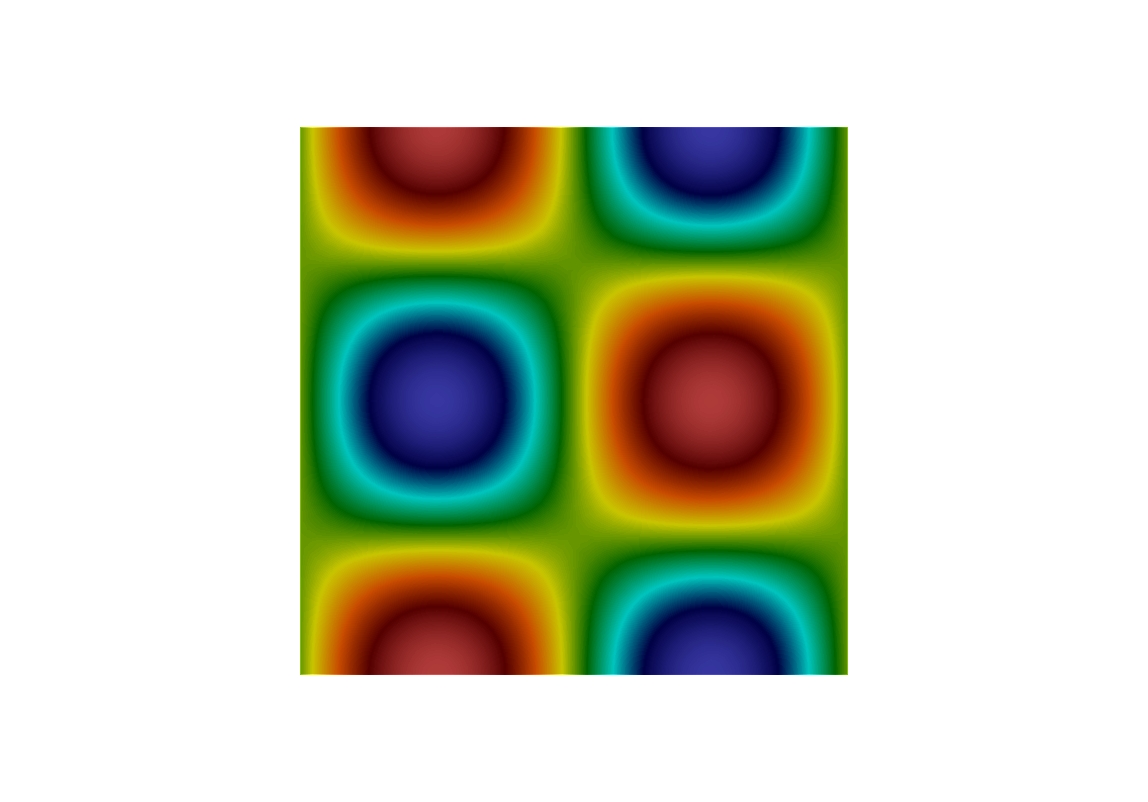}} & 
\raisebox{-.5\height}{\includegraphics[width = .15\textwidth, 
height=0.04\textheight]{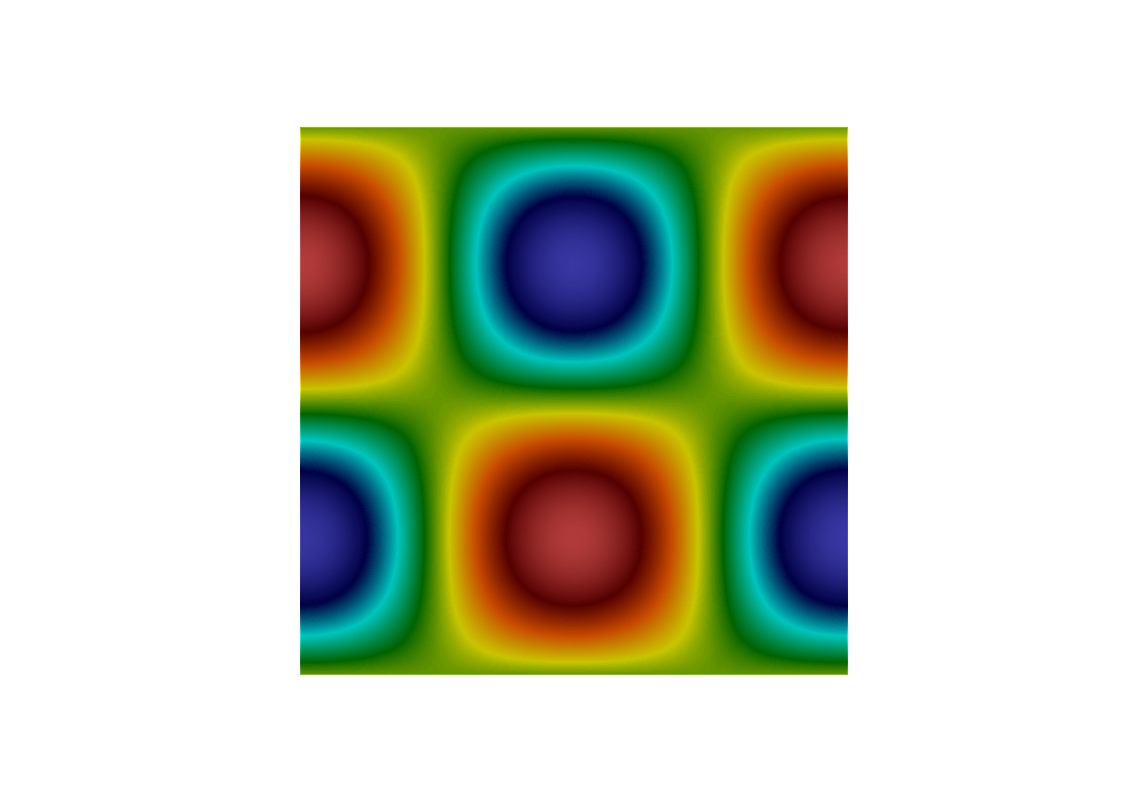}}\\\hline
\end{tabular}\label{table:lambda2square}\caption{First 7 eigenmodes on unit square with parameters $\mu=\rho=1,\lambda=2$. Again, $\nu_2=\nu_3$, and the corresponding eigenspace has pure $p$ modes;  $\nu_4=\nu_5$, and the corresponding eigenspace has pure $s-$ modes. Note that the eigenspace corresponding to $\nu_6$ has an $s$ mode as well as a p-mode, in contrast to \autoref{table:lambda1square}.}
\end{table}}

From the analytic expressions \autoref{eq:pmode} and \autoref{eq:smode} for the rectangle, it is clear that by changing the Lam\'e parameters one can change the multiplicity as well as the composition of eigenspaces. For example, the first eigenmode on a rectangle $[0,1]\times[0,2]$ with $\mu=\lambda=\rho=1$ is a pure $p$ mode; changing $\lambda$ to 10  will make the first eigenmode be a pure $s$ mode. 

 For shapes other than rectangles, the eigenmodes do not need to be pure $s$ or pure $p$ waves. This points out the fact that the shape of the boundary is an important factor in this problem. The L-shaped domain discussed in the convergence studies is an example of such a domain:  due to  the re-entrant corner (see \autoref{table:lshaped}), the eigenmodes are neither pure shear nor pure compression modes. The same can be observed on the isosceles triangle of vertices $(0,0),\, (2,0)$ and $(0,1)$, where none of the eigenfunctions seem to be divergence or curl free (see \autoref{table:triangle}).

\graphicspath{{./images/EigenvectorPlots/}}
\small{\begin{table}[!ht]
\centering
\begin{tabular}{|c|c|c|c|c|c|c|}
\hline
 $j$& $\nu_j$ & $\nu_j/\pi^2$ & $\| \div\, \u\|_0^2$ & $\|\curl\,\u\|_0^2$ & $x-$component & $y-$component 
\\\hline
 1& 0.08848& 0.008965 & 0.01531 & 5.013 & \raisebox{-.5\height}{\includegraphics[width = .15\textwidth, 
height=0.04\textheight]{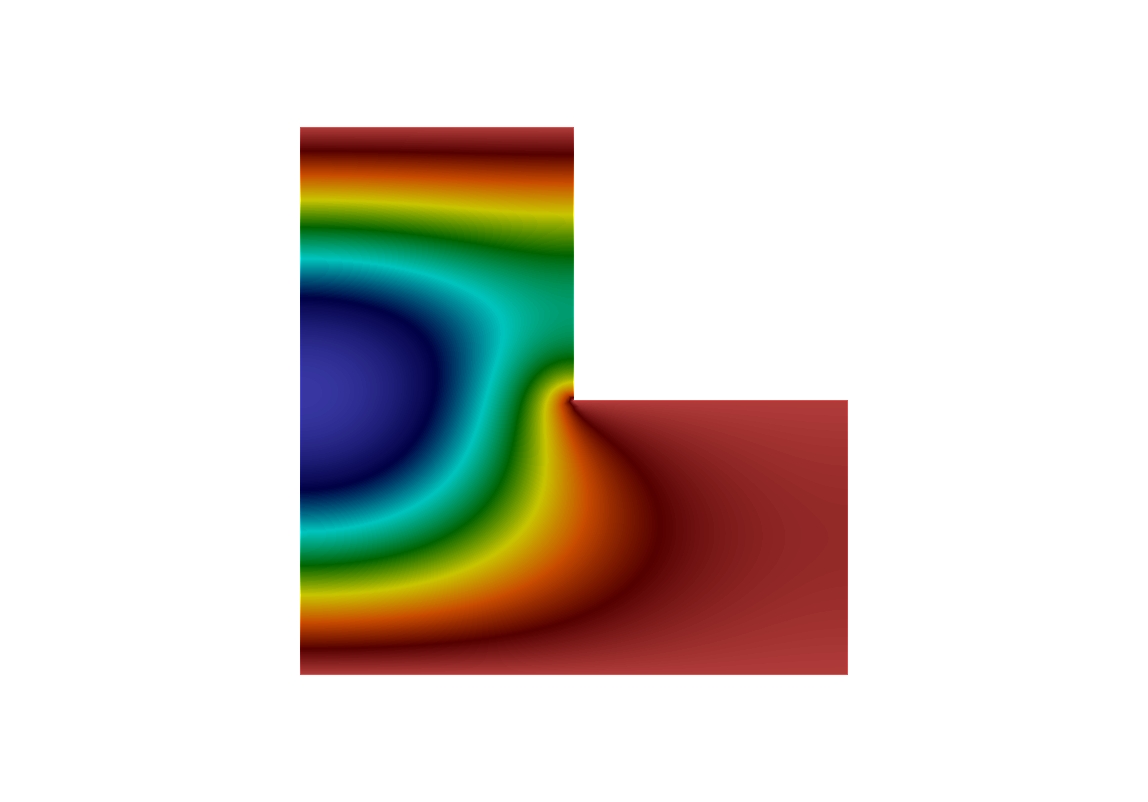}} & 
\raisebox{-.5\height}{\includegraphics[width = .15\textwidth, 
height=0.04\textheight]{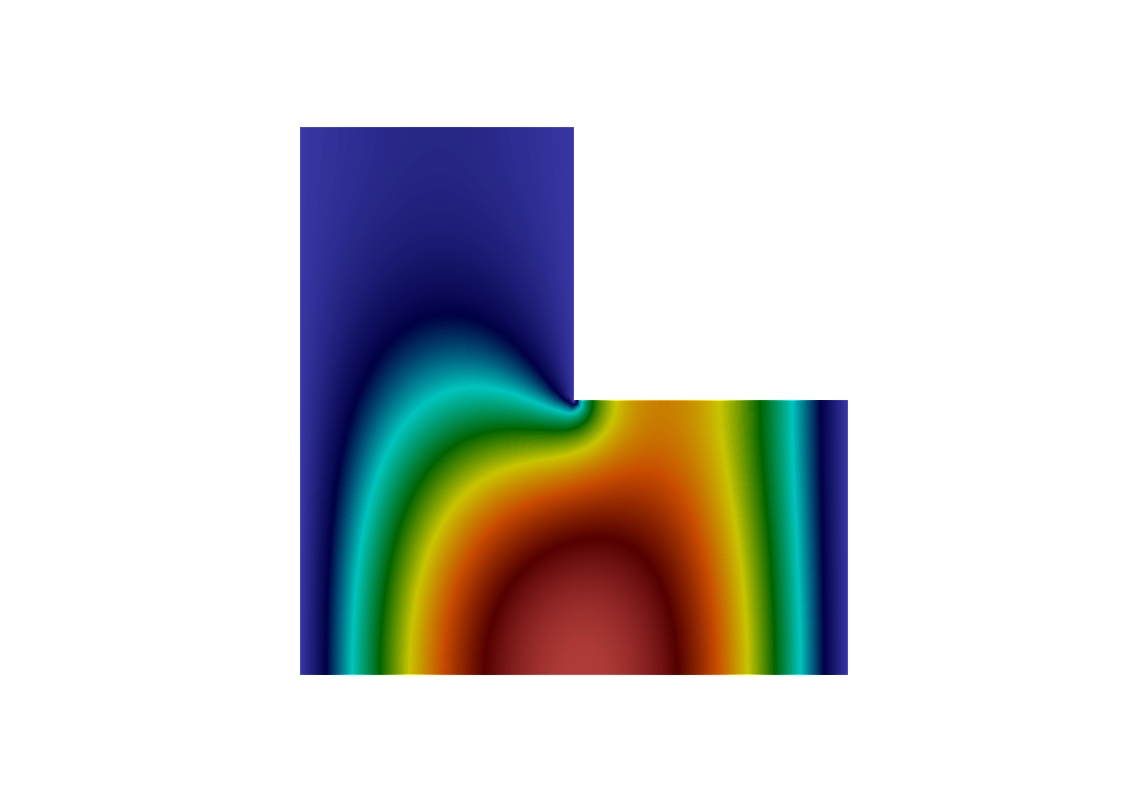}}\\
 2& 1.285& 0.1302 & 0.1411 & 5.273 & \raisebox{-.5\height}{\includegraphics[width = .15\textwidth, 
height=0.04\textheight]{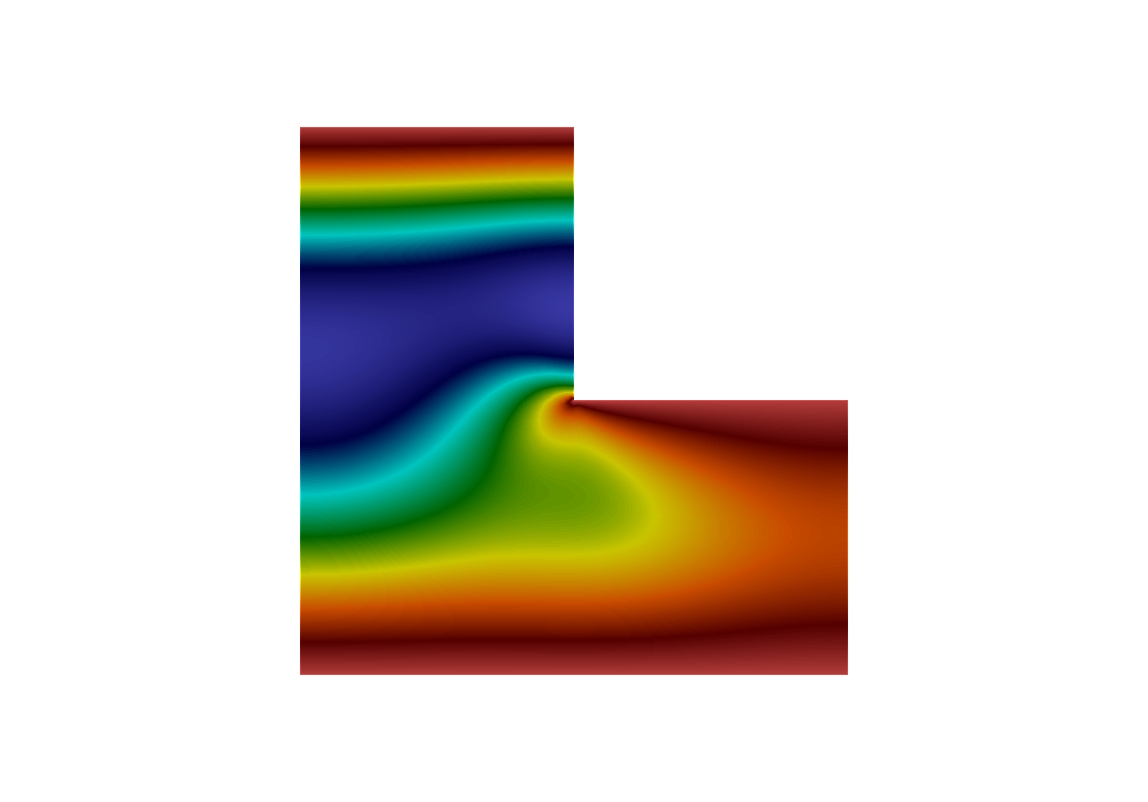}} & 
\raisebox{-.5\height}{\includegraphics[width = .15\textwidth, 
height=0.04\textheight]{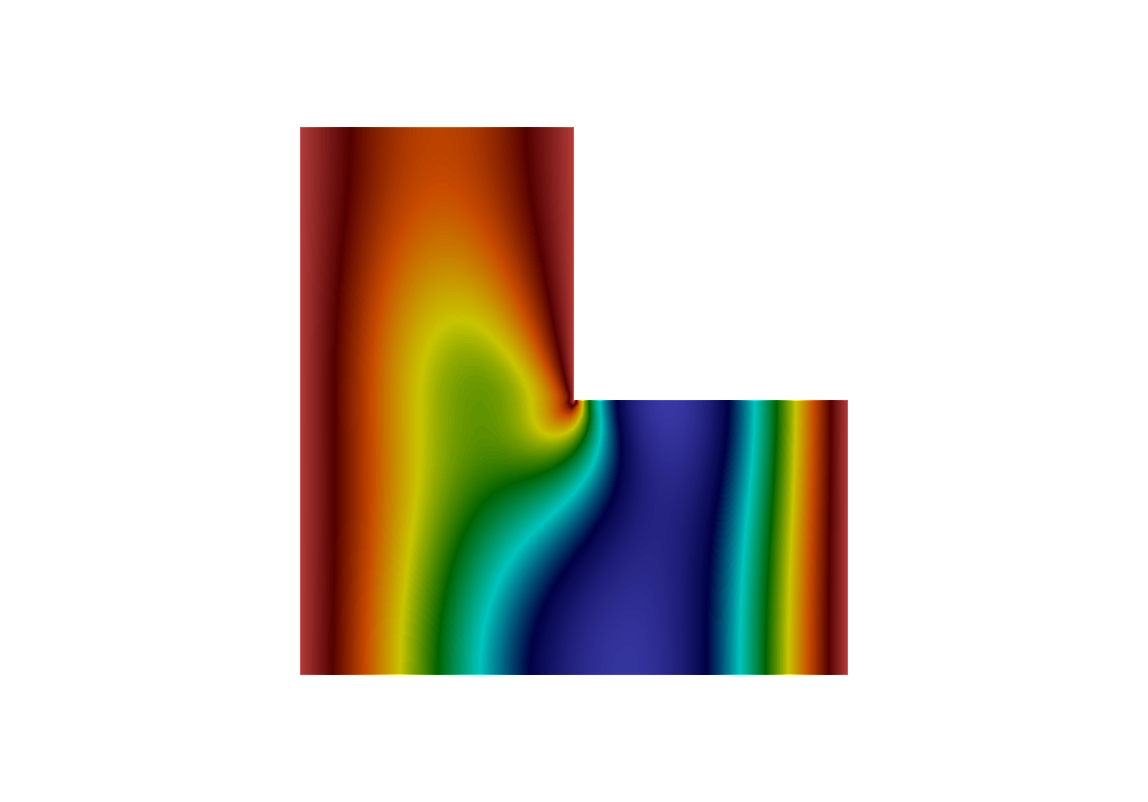}}\\
 3& 3.8100& 0.3861 & 0.2903 & 15.43 & \raisebox{-.5\height}{\includegraphics[width = .15\textwidth, 
height=0.05\textheight]{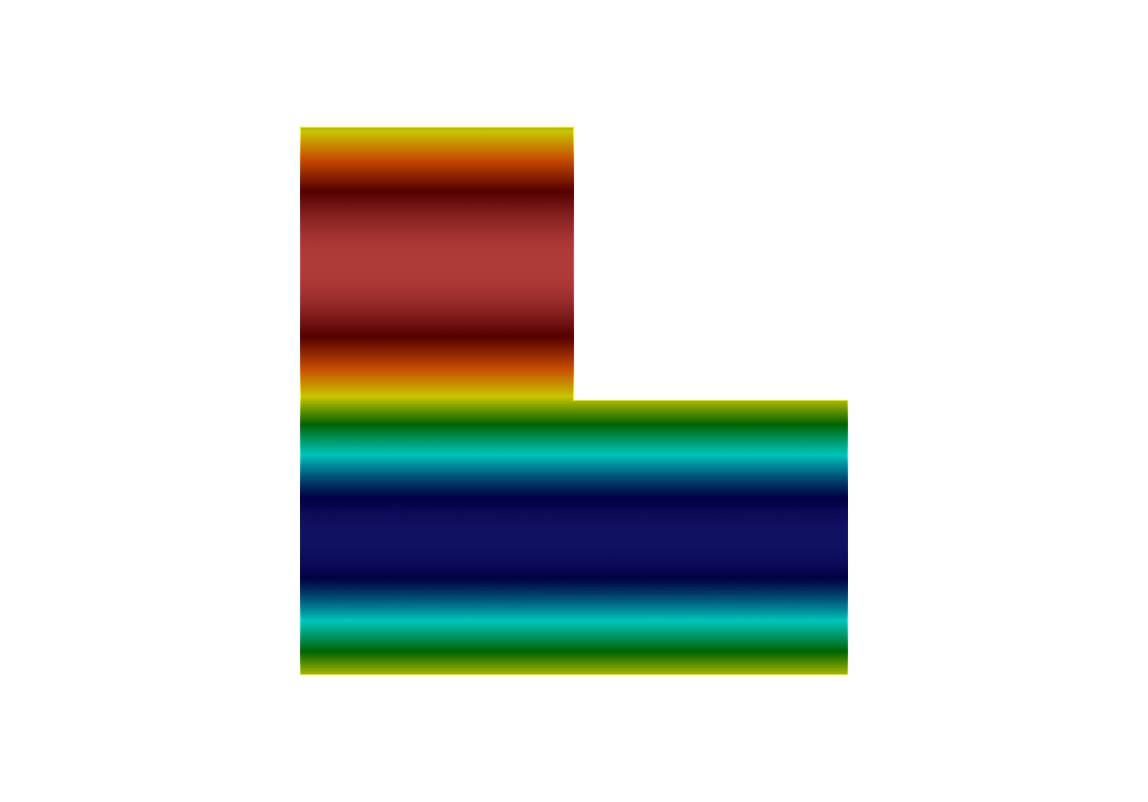}} & 
\raisebox{-.5\height}{\includegraphics[width = .15\textwidth, 
height=0.04\textheight]{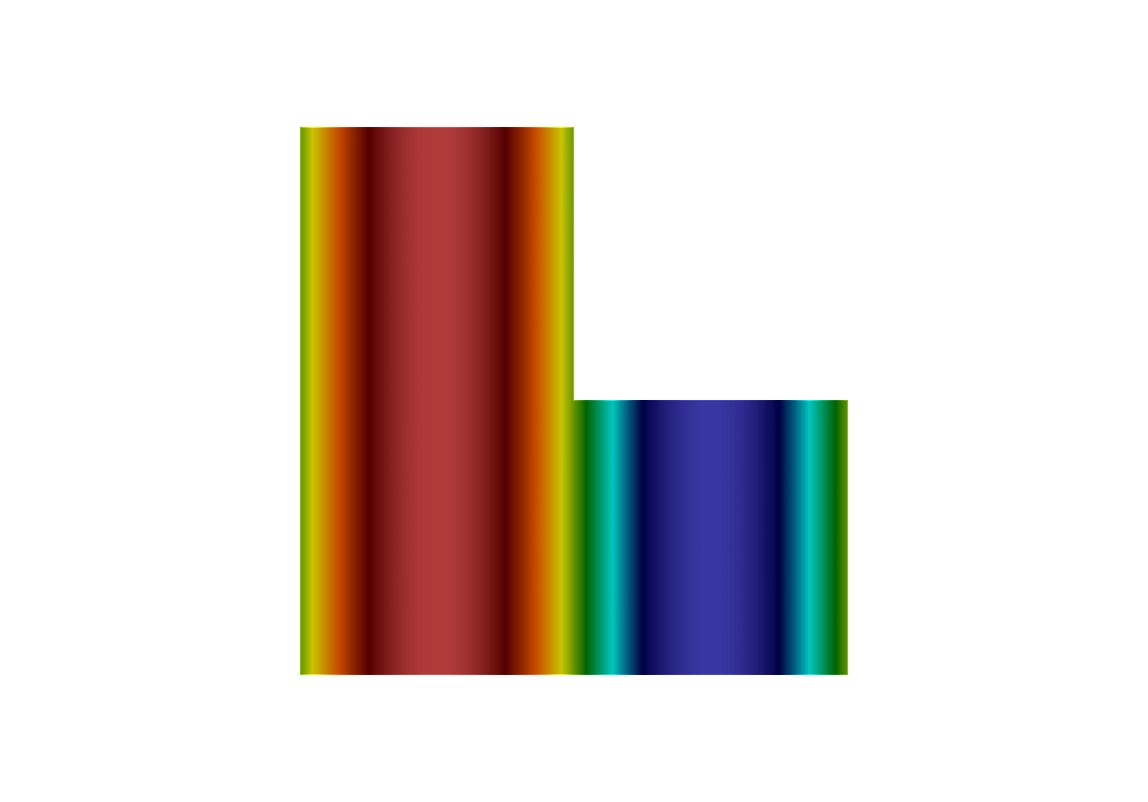}}\\
 4& 5.1300& 0.5197 & 0.4458 & 8.277 & \raisebox{-.5\height}{\includegraphics[width = .15\textwidth, 
height=0.05\textheight]{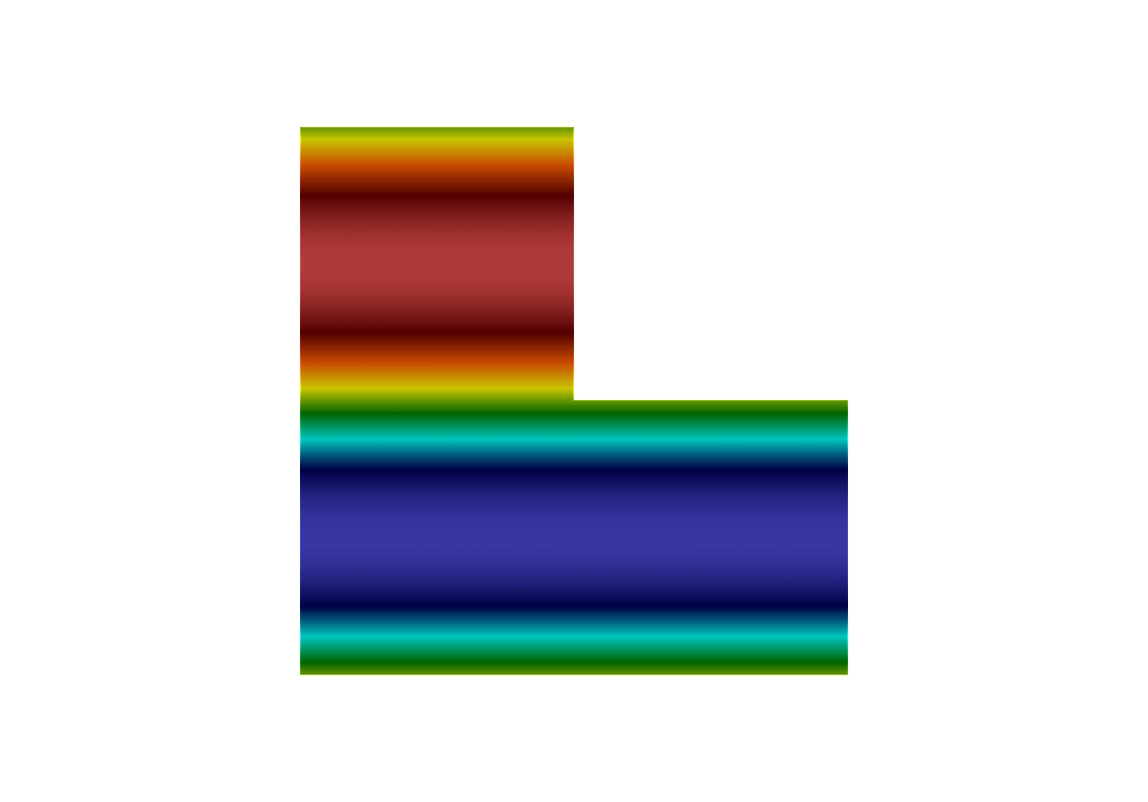}} & 
\raisebox{-.5\height}{\includegraphics[width = .15\textwidth, 
height=0.04\textheight]{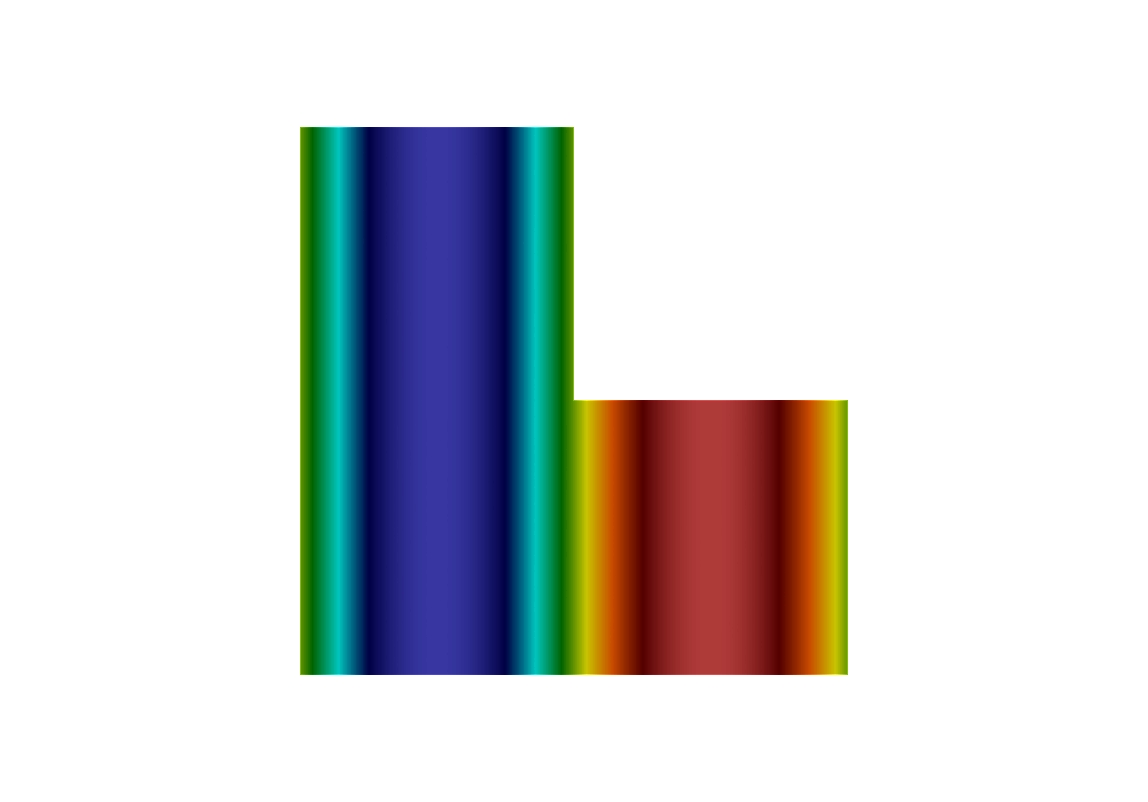}}\\\hline
\end{tabular}\label{table:lshaped}
\caption{L-shaped domain with parameters $\mu=\lambda=\rho=1$.}
\end{table}}

\graphicspath{{./images/EigenvectorPlots/}}
\small{\begin{table}[!ht]
\centering
\begin{tabular}{|c|c|c|c|c|c|c|}
\hline
 $j$& $\nu_j$ & $\nu_j/\pi^2$ & $\| \div\, \u\|_0^2$ & $\|\curl\,\u\|_0^2$ & $x-$component & $y-$component 
\\\hline
 1& 4.6563& 0.4718 & 0.7007 & 24.36 & \raisebox{-.5\height}{\includegraphics[width = .15\textwidth, 
height=0.05\textheight]{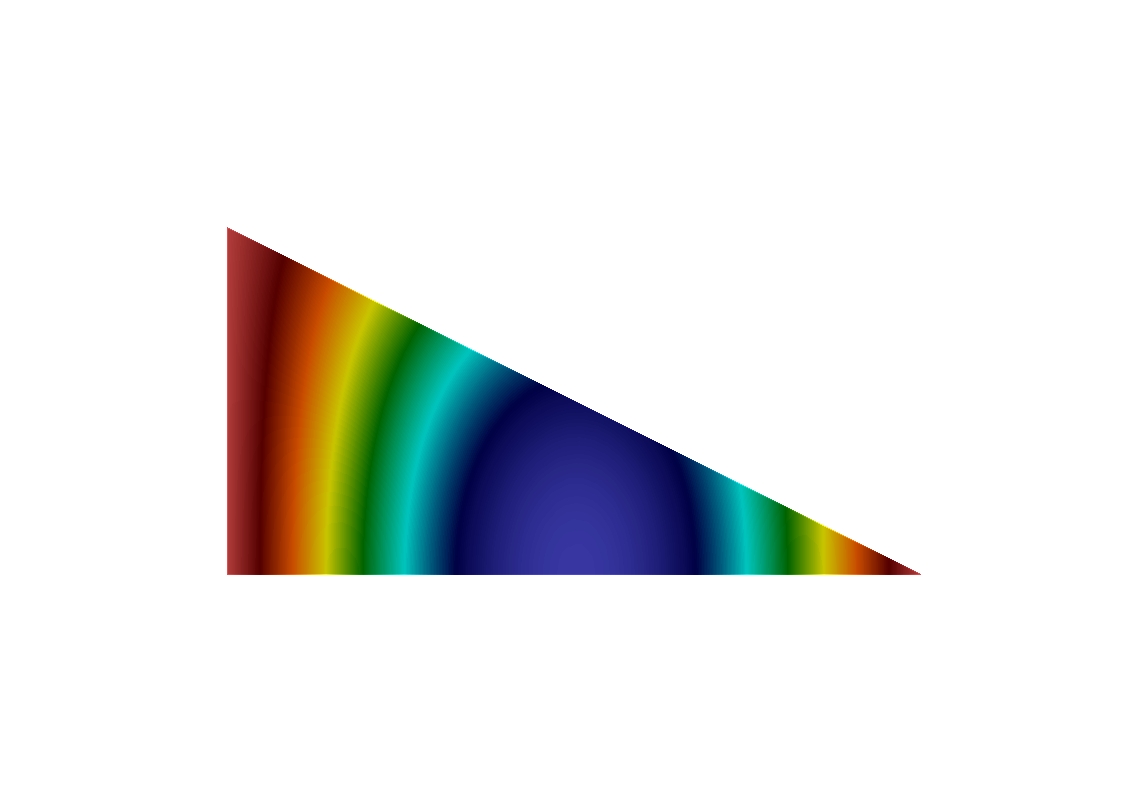}} & 
\raisebox{-.5\height}{\includegraphics[width = .15\textwidth, 
height=0.05\textheight]{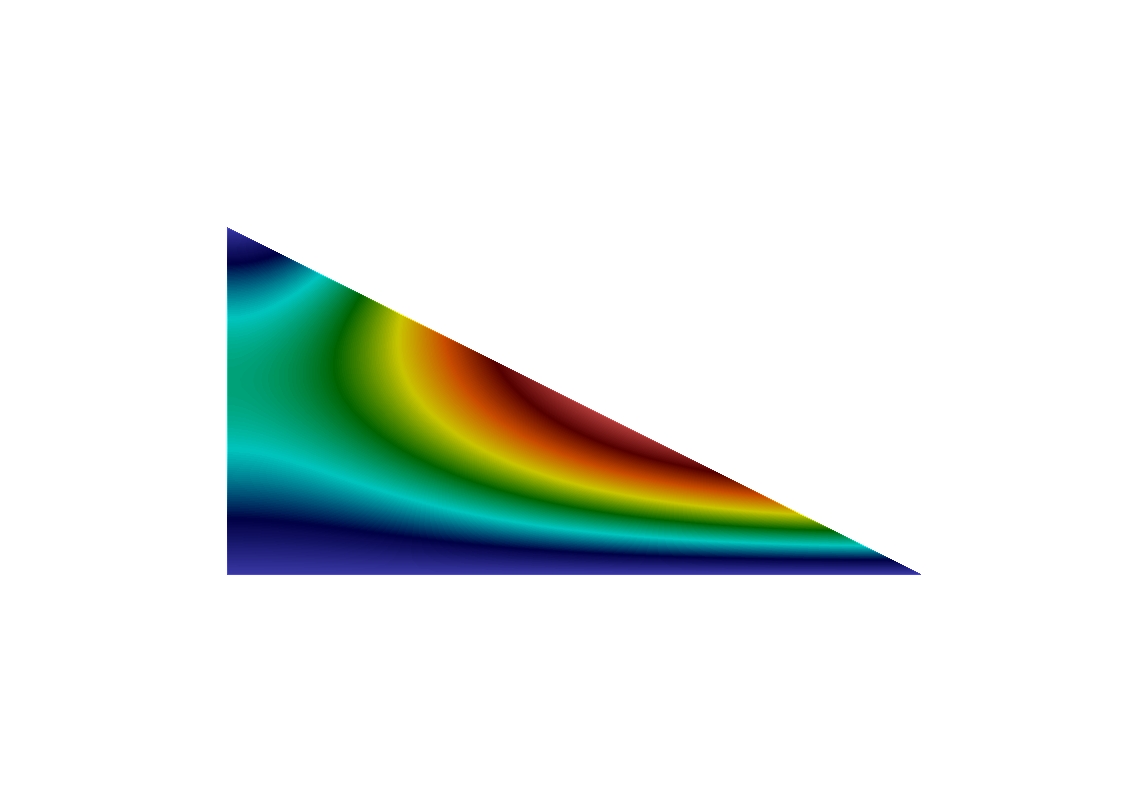}}\\
 2& 8.3125& 0.8422 & 0.4333 & 14.42 & \raisebox{-.5\height}{\includegraphics[width = .15\textwidth, 
height=0.05\textheight]{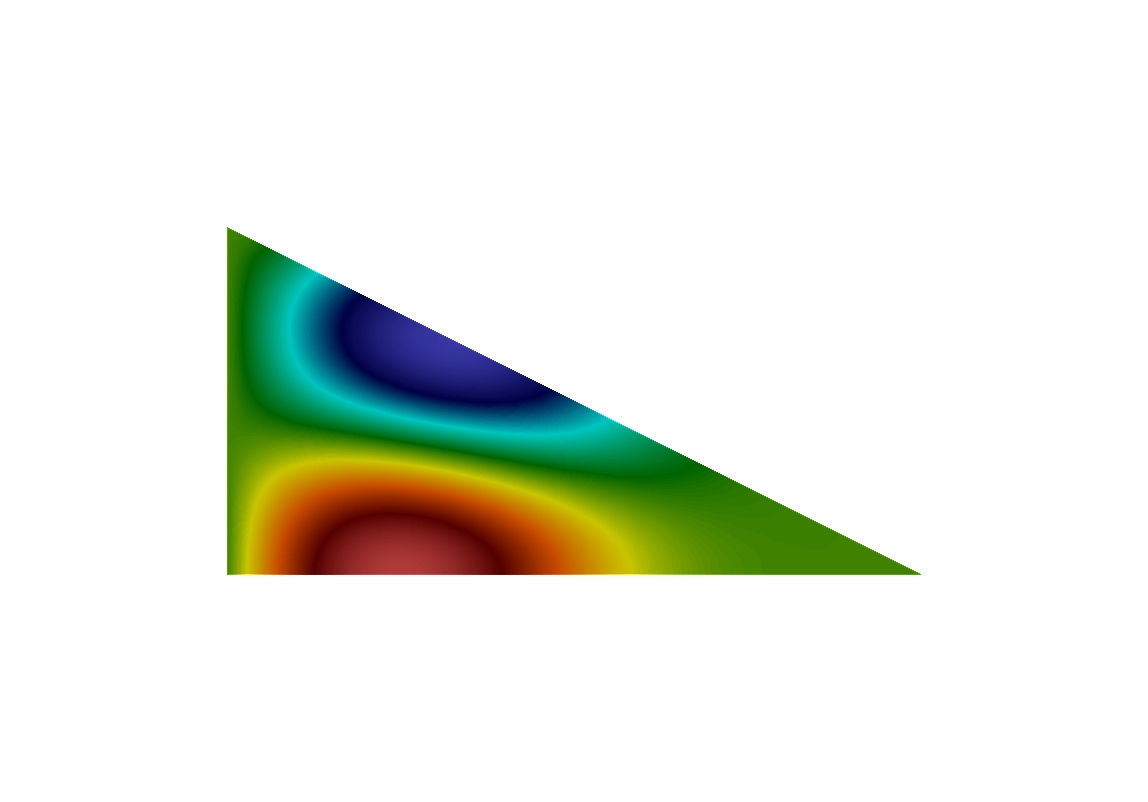}} & 
\raisebox{-.5\height}{\includegraphics[width = .15\textwidth, 
height=0.05\textheight]{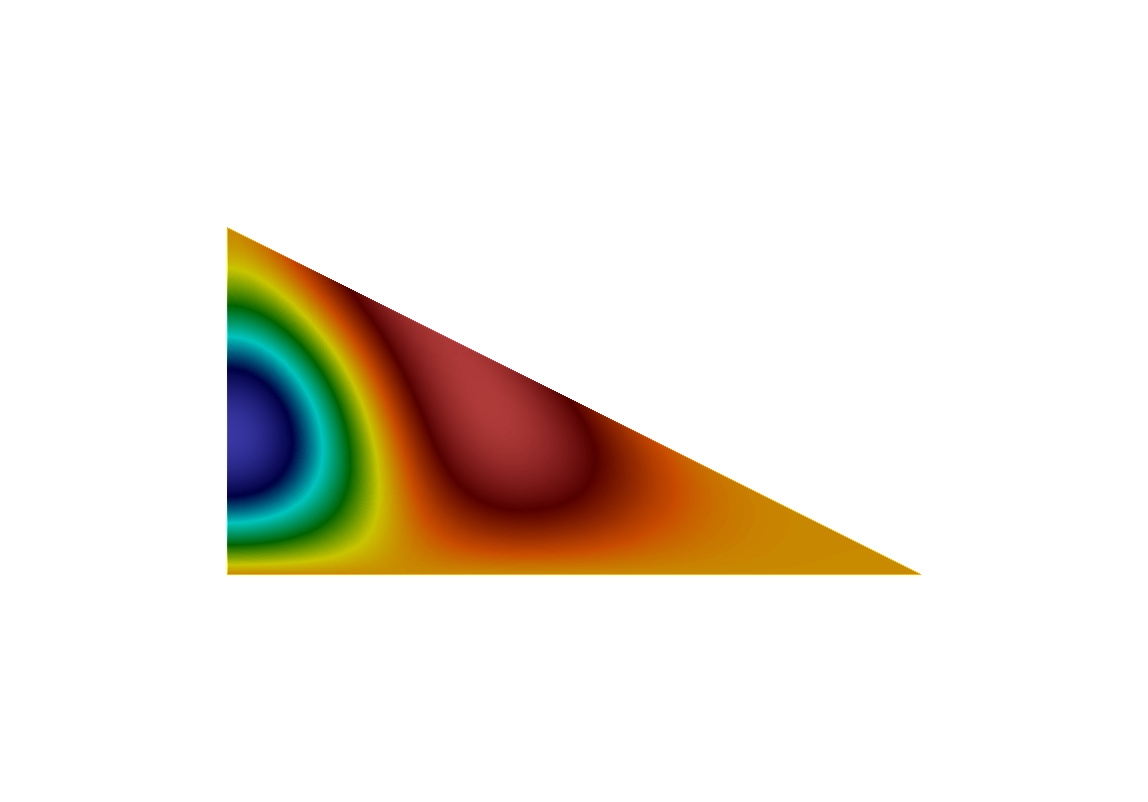}}\\
 3& 11.84674& 1.200 & 2.527 & 4.15 & \raisebox{-.5\height}{\includegraphics[width = .15\textwidth, 
height=0.05\textheight]{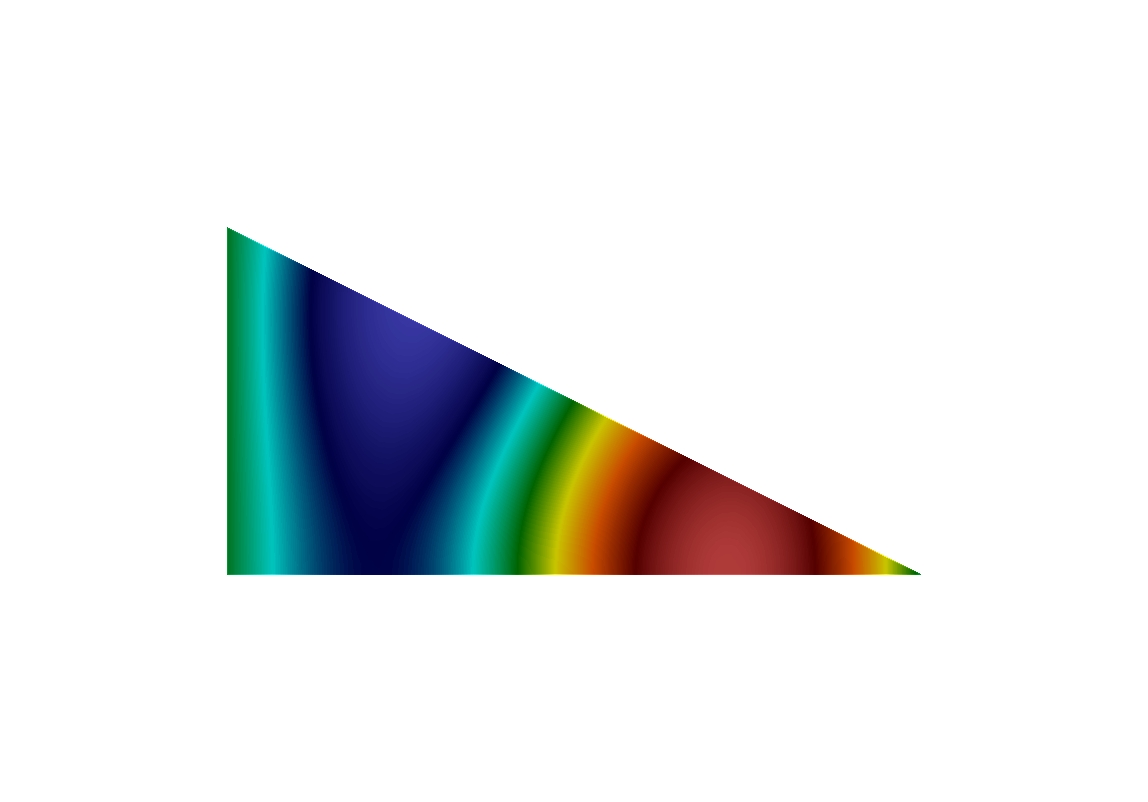}} & 
\raisebox{-.5\height}{\includegraphics[width = .15\textwidth, 
height=0.05\textheight]{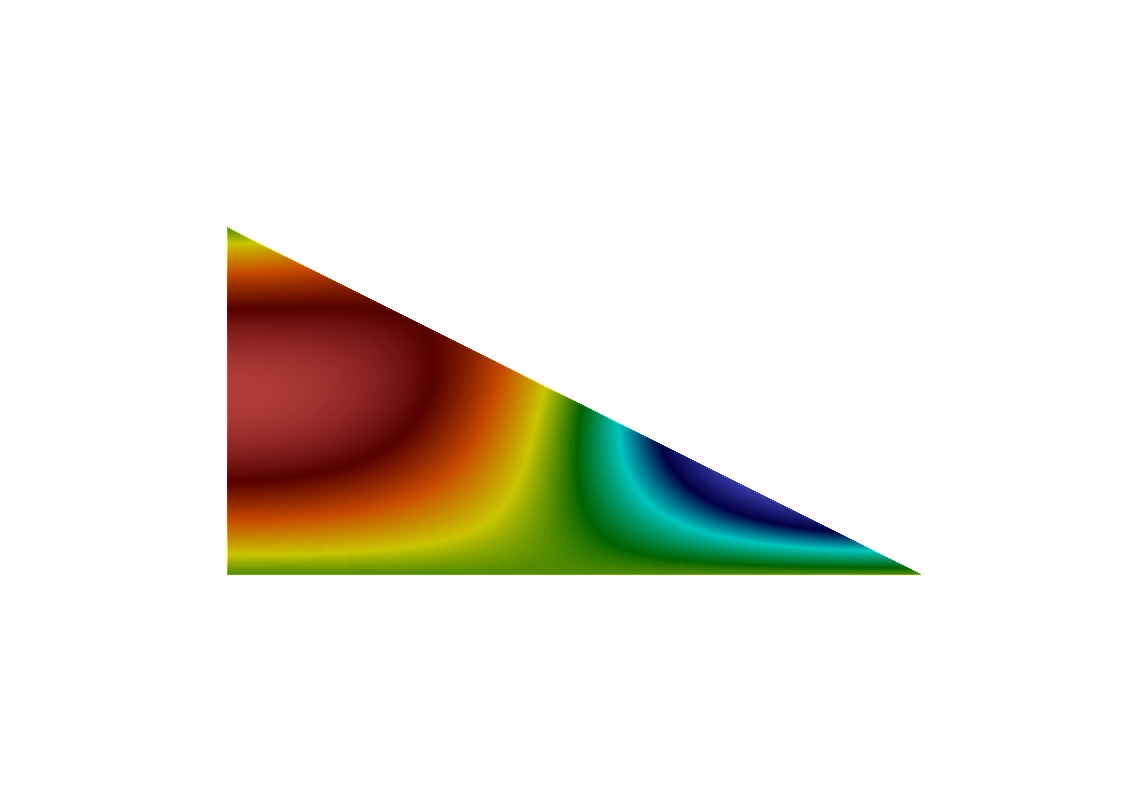}}\\
 4& 21.0647& 2.134 & 1.640 & 75.96 & \raisebox{-.5\height}{\includegraphics[width = .15\textwidth, 
height=0.05\textheight]{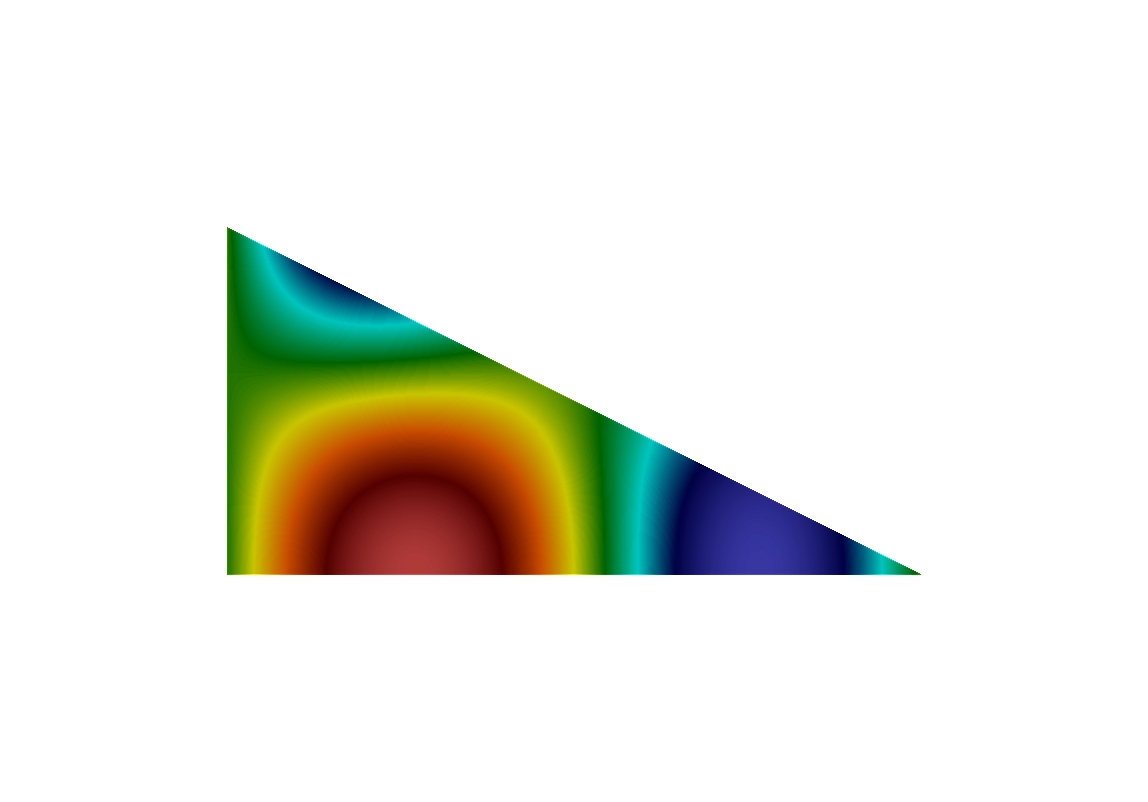}} & 
\raisebox{-.5\height}{\includegraphics[width = .15\textwidth, 
height=0.05\textheight]{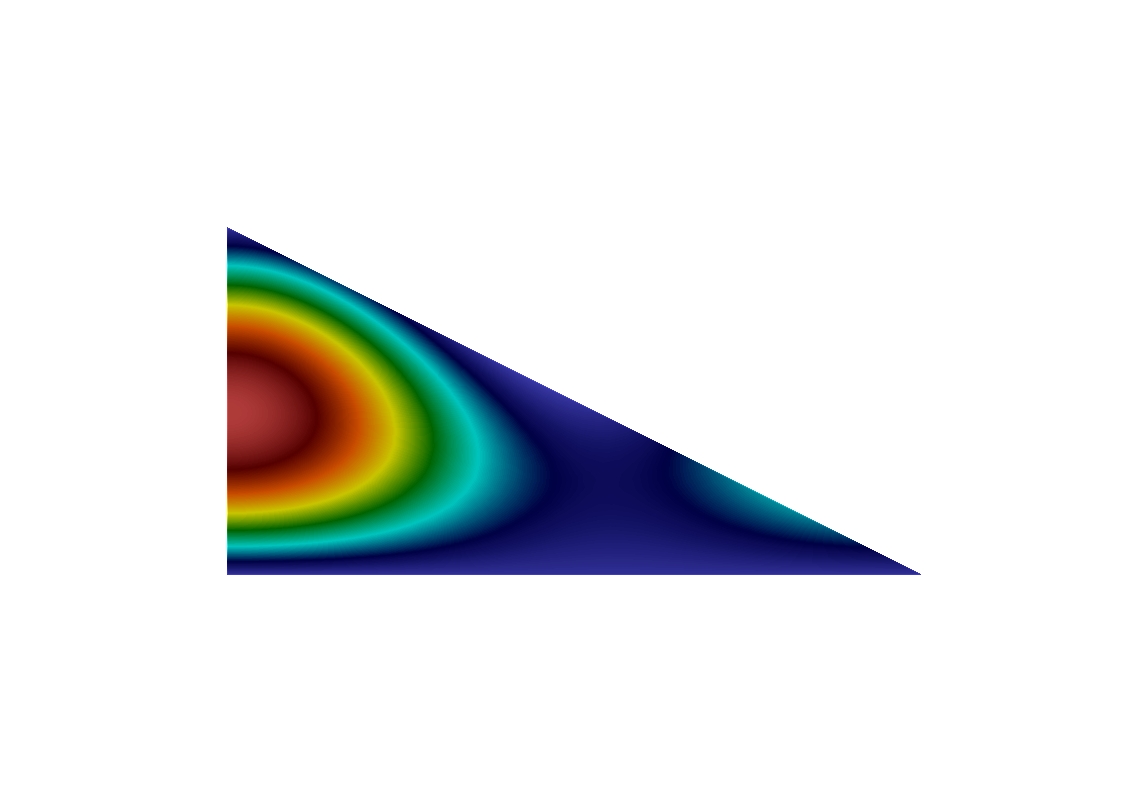}}\\\hline
\end{tabular}\label{table:triangle}
\caption{Isosceles triangle of vertices $(0,0)$, $(2,0)$ and $(0,1)$ with parameters $\lambda=\mu=\rho=1$.}
\end{table}}

\paragraph{Conclusions}\label{sec:conclusions}
In this work we demonstrated the {existence} of Jones modes on Lipschitz domains. The spectrum of 
{the Jones eigenproblem (cf. \autoref{eq:jones-modes})}  depends 
heavily on the shape of the domain under consideration (as shown in \autoref{sec:formulation}). 

{Axisymmetric domains such as bodies of revolution, spheres or disks present more challenges for computation compared to polygonal domains. Concretely, there are two issues in these cases: the presence of a zero eigenvalue, and the discretization of the boundary curve. The former issue is handled by using a shift. The effect of the discretization of the smooth boundary by a polygonal one is more profound. As we saw in the computations for the disk, the primal formulation did not capture the same part of the spectrum. This phenomenon was also found to hold for other smooth domains, such as ellipses.}
		
{We believe the reason for this phenomenon is the interplay between the imposition of the constraint on $\mathbf{u\cdot n}$, and the fact that the boundary is being approximated by piecewise linear polynomials. 
We expect curvilinear elements which are boundary-conforming should ameliorate this problem, and a careful investigation of this is ongoing work.}

\paragraph{Acknowledgements}
Sebasti\'an Dom\'inguez thanks the support of CONICYT-Chile, through Becas Chile. Nilima Nigam gratefully thanks the 
financial support of the National Sciences and Engineering Research Council of Canada (NSERC) Discovery Grants and the hospitality of the Institute for Mathematics and its Applications, Minneapolis. The 
research of Jiguang Sun was partially supported by NSF grant DMS-1521555. {The authors are grateful to the anonymous referees for their helpful comments.}

{This paper is dedicated to George C. Hsiao, Gabriel Gatica and Francisco-Javier Sayas Gonz\'alez (Pancho) on the occassion of their 85th, 60th and 50th birthdays respectively. Initial ideas for this work were discussed at length with Pancho. His friendship, intellectual courage and mathematical talent will be deeply missed.}

\bibliographystyle{plain}
\bibliography{COMPLETE.bib}

\end{document}